\DeclareMathOperator*{\argmin}{\arg\!min}
\DeclareMathAlphabet{\mathpzc}{OT1}{pzc}{m}{it}
\newcommand{\review}[1]{#1}
\definecolor{darkgreen}{rgb}{0.2, 0.4, 0.23}
\begin{document}

\newcommand{\operator}{\mathcal{F}}
\newcommand{\mub}{\boldsymbol{\mu}}
\newcommand{\nub}{\boldsymbol{\nu}}
\newcommand{\xb}{\mathbf{x}}
\newcommand{\vb}{\mathbf{v}}
\newcommand{\wb}{\mathbf{w}}
\newcommand{\ub}{\mathbf{u}}
\newcommand{\ufomp}{\ub_{\mub,\nub}}
\newcommand{\uromp}{\ufomp^{\textnormal{ROM}}}
\newcommand{\fomdim}{N_{h}}
\newcommand{\pgeo}{\Theta}
\newcommand{\pphys}{\Theta'}
\newcommand{\ngeo}{p}
\newcommand{\nphys}{p'}
\newcommand{\dod}{\mathbf{V}}
\newcommand{\VV}{\mathbb{V}}
\newcommand{\mass}{\mathbb{G}}
\newcommand{\solmanifold}{\mathscr{S}}
\newcommand{\grassmann}{\mathscr{G}}
\newcommand{\fomspace}{\mathbb{R}^{N_{h}}}
\newcommand{\dodout}{\mathbb{R}^{N_{h}\times n}}
\newcommand{\spann}{\text{span}}
\newcommand{\distance}{\mathpzc{d}}
\newcommand{\ntrain}{N_{\text{train}}}
\newcommand{\loss}{\mathscr{L}}
\newcommand{\ubc}{\boldsymbol{u}}
\newtheorem{theorem}{Theorem}
\newtheorem{lemma}{Lemma}
\newtheorem{remark}{Remark}
\newcommand{\adapt}{\textnormal{Adpt}}
\newcommand{\ambient}{\mathbb{A}}
\newcommand{\nambient}{N_{A}}
\newcommand{\hidefig}[1]{}
\renewcommand{\arraystretch}{1.25}
\newcommand{\tobedone}{\textcolor{red}{To be done.}}

\newtheorem{corollary}{Corollary}



\title{Deep orthogonal decomposition: a  continuously adaptive data-driven approach to model order reduction} 

\author{Nicola Rares Franco$^{a}$\footnote{Corresponding author.\\\indent\;\;\;E-mail: nicolarares.franco@polimi.it, Postal address: via Edoardo Bonardi, 9, 20133 Milano MI, Italy}, Andrea Manzoni$^{a}$, Paolo Zunino$^{a}$, Jan S. Hesthaven$^{b}$}

\date{}
\maketitle

\vspace{-0.5cm}
\begin{center}
\noindent $^a$ \textit{MOX, Department of Mathematics, Politecnico di Milano, P.zza Leonardo da Vinci, 32, Milan, 20094, Italy}\vspace{-0.25cm}
\end{center}
\begin{center}
$^b$ \textit{Computational Mathematics and Simulation Science, École Polytechnique Fédérale de Lausanne, Station 8, Lausanne, 1015, Switzerland}
\end{center}

\begin{abstract}
We develop a novel deep learning technique, termed Deep Orthogonal Decomposition (DOD), for dimensionality reduction and reduced order modeling of parameter dependent partial differential equations. The approach consists in the construction of a deep neural network model that approximates the solution manifold through a continuously adaptive local basis. In contrast to global methods, such as Principal Orthogonal Decomposition (POD), the 
adaptivity allows the DOD to overcome the Kolmogorov barrier, making the approach applicable to a wide spectrum of parametric problems. Furthermore, due to its hybrid linear-nonlinear nature, the DOD can accommodate both intrusive and nonintrusive techniques, providing highly interpretable latent representations and tighter control on error propagation. For this reason, the proposed approach stands out as a valuable alternative to other nonlinear techniques, such as deep autoencoders. The methodology is discussed both theoretically and practically, evaluating its performances on problems featuring nonlinear PDEs, singularities, and parametrized geometries.
\end{abstract}


\noindent \textit{Keywords}: Reduced order modeling, parametrized PDEs, adaptive methods, neural networks\vspace{-0.25cm}
\\\\
\noindent\textit{2020 MSC}:  65N99, 35B30, 68T07


\section{Introduction}
\label{sec:intro}

Thanks to the massive advancements in computer science and numerical analysis over the past century, intricate partial differential equations (PDEs) can nowadays be solved with incredible accuracy. This breakthrough allows simulation of complex physical phenomena, with applications ranging from aerospace 
\cite{farhat2001multidisciplinary} to cardiovascular engineering \cite{bucelli2023mathematical}. 

However, producing accurate simulations can require a large amount of computational resources, sometimes involving hours to days of processing time. While this may be acceptable for certain applications, it is definitely unsuitable whenever dealing with many-query scenarios, where numerous solutions need to be computed sequentially and in real-time. For instance, in the context of parameter-dependent PDEs, applications involving inverse problems \cite{cao2023residual, benfenati2024modular, lahivaara2019estimation,
mucke2023markov,
zabaras2011solving}, optimal control \cite{bader2016certified, kleikamp2023greedy, ravindran2000reduced, strazzullo2018model} or uncertainty quantification \cite{cicci2023uncertainty, vitullo2024deep, zhu2018bayesian}, can easily become prohibitive due to prolonged processing times and unbearable computational demands. As a remedy, Reduced Order Modeling techniques \cite{hesthaven2016certified, quarteroni2016reduced} are now gaining the attention of many scientists and researchers.

In simple terms, Reduced Order Modeling aims at creating effective model surrogates, known as reduced order models (ROMs), capable of mimicking the precision of traditional methods in price of a lower computational cost. Usually, these ROMs achieve their proficiency by learning from high quality simulations obtained by means of classical numerical solvers, commonly referred to as full order models (FOMs), extracting essential information for replicating the behavior of complex system. This learning process allows ROMs to capture the essential features of the original models, making them capable of providing accurate results at a fraction of the computational expense. However, this increased efficiency is typically achieved at the cost of an acceptable reduction in model accuracy.

As of today, the literature has been constantly evolving, to the point that domain practitioners can now rely on a very broad spectrum of 
reduction techniques. In general, different approaches can provide different guarantees, and choosing the appropriate ROM is often a matter of the underlying application. For instance, data-driven ROMs based on Machine Learning and Deep Learning algorithms, such as \cite{hesthaven2018non, franco2023deep, fresca2021pod, pichi2024graph}, can be extremely efficient, but they can be unsuited if the application demands a strong agreement with the physical constraints characterizing the system under study -- despite several efforts towards this direction; see, e.g., \cite{boon2023deep, chen2021physics, goswami2023physics}. Conversely, other ROMs directly work with the governing equation, reducing their complexity via suitable projection techniques, which makes them intrinsically coherent with the underlying physics, see, e.g. \cite{benner2015survey, hesthaven2016certified, quarteroni2016reduced}. In turn, these ROMs can become extremely inefficient when dealing with complex systems described in terms of highly nonlinear PDEs, or when facing problems with space-localized effects or moving fronts \cite{ohlberger2015reduced}.

However, regardless of this distinction, most ROMs are grounded on a common foundation, which is to rely on \textit{dimensionality reduction} techniques. The latter consist of suitable algorithms capable of compressing and reconstructing high-fidelity simulations by mapping them onto a small feature space, also known as the \textit{latent space}. To this end, researchers can rely on a large pletora of different approaches, from linear techniques based on orthogonal projections, such as Principal Orthogonal Decomposition (POD) \cite{quarteroni2016reduced}, to fully nonlinear strategies, employing, e.g., wavelet transforms \cite{devore1993wavelet}, deep autoencoders \cite{franco2023deep} and more. Once again, the choice is typically problem-specific: in some cases, linear projection techniques can provide a high-level of accuracy at a high compression rate, as in the case of diffusion processes; in other situations, instead, nonlinear techniques are more favorable as, although returning much complex representations, they can effectively capture nontrivial features such as sharp edges, moving fronts, and singularities.

Mathematically speaking, this distinction is perfectly represented by the concept of Kolmogorov $n$-width. Given a target set $\mathscr{S}$, which we can think of as the set that collects all PDE solutions associated with a given parametric problem, and a reduced dimension $n$, the Kolmogorov $n$-width $d_{n}(S)$ measures the extent to which the set $S$ can be approximated using linear subspaces of dimension $n$. 
If, for a certain problem, $d_{n}(\mathscr{S})$ decays rapidly in $n$, then linear techniques can be a favorable option. With simple representations and many theoretical guarantees, projection-based methods are most likely the best alternative when dealing with such situations. However, if $d_{n}(\mathscr{S})$ decays slowly, linear methods become unsuitable, as, in order to ensure a reasonable accuracy, they require a large latent space, which, in turn, undermines the usefulness of actual dimensionality reduction. Then, in this case, nonlinear techniques can be a potential solution.

In this work, we would like to focus on a specific scenario that, compared to the previous ones, is arguably somewhere in between (see Figure \ref{fig:kolmogorovs} for positioning of this work within a synthesis of the state-of-the-art). To illustrate the idea, consider an ideal example of fluid flow around an obstacle. For simplicity, let us assume a low Reynolds regime, so that the whole dynamics is well represented by the Stokes equations. The problem may depend on several parameters: some of them, which we collect in the parameter vector $\mub$, determine the position/orientation of the obstacle, while others, denoted as $\nub$, influence physical quantities, such as, for example, the fluid viscosity and/or the inflow condition.
As testified by multiple works in the ROM literature, see, e.g., \cite{apacoglu2011cfd, lorenzi2016pod}, assuming a fixed geometric configuration $\mub_{0}$, linear methods can be extremely effective in capturing the variability of the fluid flow with respect to $\nub$. However, if we allow the obstacle location to change, that is, if we let $\mub$ vary within a suitable parameter space, the situation changes dynamically: linear methods begin to struggle, and the Kolmogorov $n$-width starts to deteriorate, reflecting the typical behavior of parametric PDEs with space-interacting parameters \cite{ohlberger2015reduced, romor2023non}. 

In general, this is a prototypical example of a situation in which: (i) $d_{n}(\mathscr{S})$ decays slowly, but (ii) the solution manifold admits a decomposition into suitable submanifolds, $\mathscr{S_{\mub}}$, with fast decaying Kolmogorov $n$-widths. In particular, these submanifold can be obtained by fixing the values of the 
"bad" parameters, $\mub$, while leaving the others, $\nub$, free to change.
Our purpose for this work is to present a novel approach to model order reduction that can exploit the intrinsic regularity of such problems by leveraging on a \textit{continuously adaptive} linear subspace. Specifically, the idea is to construct a deep neural network architecture 
that, to each parameter vector $\mub$, associates a corresponding linear subspace $\mathcal{V}_{\mub}$ approximating the submanifold $\mathscr{S}_{\mub}\subset\mathscr{S}$. Equivalently, our proposal can be seen as a continuous generalization of localized POD algorithms. Given the use of neural networks and the intimate connection with classical POD, we term our approach Deep Orthogonal Decomposition (DOD).
\\\\
The paper is organized as follows. First, in Section~\ref{sec:setup}, we start by setting some notation, introducing the problem of interest, the underlying assumptions, and their consequences. Then, in Section~\ref{sec:dod}, we introduce the DOD algorithm, discussing the whole idea from the sole perspective of dimensionality reduction; the use of DOD for reduced order modeling, instead, is deferred to Section~\ref{sec:dodrom}. In both cases, we evaluate the proposed approaches through numerical experiments, comparing their performances with the state-of-the-art. These results are reported in Section~\ref{sec:exp1} and~\ref{sec:exp2}, respectively. Lastly, we devote Section~\ref{sec:conclusions} to a concluding discussion, where we underscore both the strengths and limitations of the proposed approach, offering additional information on possible future developments.

\begin{figure}
    \centering
    \includegraphics[width = 1\textwidth]{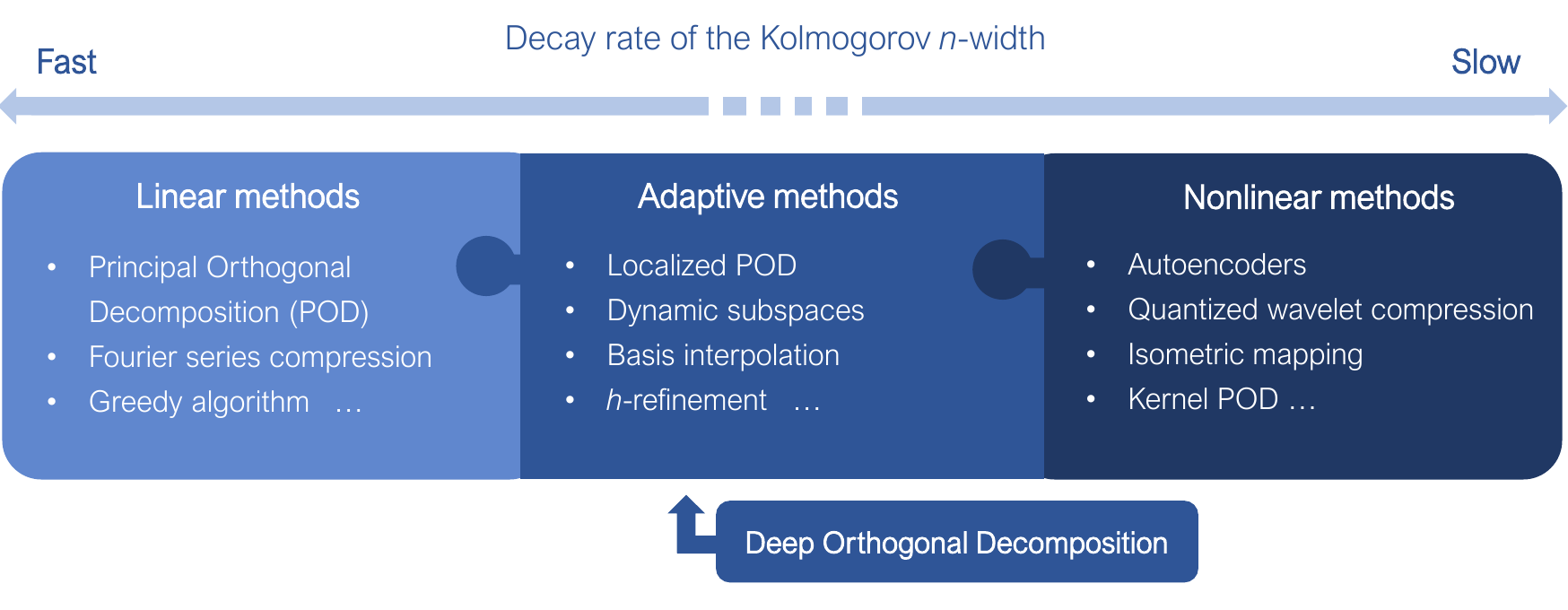}
    \caption{A non-comprehensive list of dimensionality reduction techniques in reduced order modeling. While linear techniques can be very effective for problems with a fast-decaying Kolmogorov $n$-width, nonlinear techniques are generally better suited in the opposite scenario. Problems "in between" can present mixed properties, such as a general slow decay compensated by a much faster one in the submanifolds.}
    \label{fig:kolmogorovs}
\end{figure}

\begin{remark}
    It has come to our attention that the term "Deep Orthogonal Decomposition" is not entirely new: in fact, the same terminology can be found in \cite{tait2020deep}, an unpublished work by Daniel J. Tait (2020). In principle, the two approaches are entirely different, as they pursue fundamentally different goals: ours focuses on \emph{parametrized stationary PDEs}, leveraging on a suitable decoupling of the parameter space for constructing a continuously adaptive basis; \cite{tait2020deep}, instead, deals with \emph{(unparametrized) time-dependent PDEs}, leveraging on neural networks for the construction of a memory-aware time-adaptive local basis. On top of this, the two approaches differ in terms of neural network architectures, training routines, and online computations. However, given that both works share the common idea of exploiting deep learning for the construction of an adaptive local basis, we insist on using the same terminology, possibly paving the way towards new avenues in model order reduction. 
\end{remark}

\section{Problem setup}
\label{sec:setup}
We start by fixing some notation.
Let $(V_{h},\|\cdot\|_{V_{h}})$ be a finite-dimensional Hilbert state space, $V_{h}\cong\mathbb{R}^{N_{h}}$, arising, for instance, from a suitable Finite Element discretization of a given stationary PDE, so that $V_{h}\subset L^{2}(\Omega)$ for some spatial domain $\Omega\subset\mathbb{R}^{d}$. Let $\{\varphi_{i}\}_{i=1}^{N_{h}}$ be a basis for $V_{h}$. Given $u\in V_{h}$, we write $$\mathbf{u}:=[\mathrm{u}^{(1)},\dots,\mathrm{u}^{(N_{h})}]^{\top}$$ for the corresponding vector of degrees of freedom (dof), that is, the set of basis coefficients that produce the representation of $u$ in terms of $\varphi_{1},\dots,\varphi_{N_{h}}$, in the sense that
\begin{equation}
    \label{eq:fom-basis}
    u(\mathbf{x}) = \sum_{i=1}^{N_{h}}\mathrm{u}^{(i)}\varphi_{i}(\mathbf{x})\quad\quad\forall\mathbf{x}\in\Omega.
\end{equation}
Let $\langle\cdot,\cdot\rangle_{V_{h}}$ be the inner product associated to $\|\cdot\|_{V_{h}}$. We define the Gram matrix $\mass\in\mathbb{R}^{N_{h}\times N_{h}}$ as the symmetric positive definite matrix given by
\begin{equation}
    \label{eq:gram}
    \mass:=\left[\begin{array}{ccc}
       \langle\varphi_{1},\;\varphi_{1}\rangle_{V_{h}} & ...  & \langle\varphi_{1},\; \varphi_{N_{h}}\rangle_{V_{h}} \\
       ...  & ... & ... \\
       \langle\varphi_{N_{h}}, \varphi_{1}\rangle_{V_{h}} & ...  & \langle\varphi_{N_{h}}, \varphi_{N_{h}}\rangle_{V_{h}}
    \end{array}\right].
\end{equation}
When $\|\cdot\|_{V_{h}}=\|\cdot\|_{L^{2}}$ is the $L^{2}$ norm, the latter is commonly referred to as the \textit{mass} matrix. The Gram matrix allows us to equip $\mathbb{R}^{N_{h}}$ with the following norm
$$\|\mathbf{u}\|:=\sqrt{\mathbf{u}^{\top}\mass\mathbf{u}},$$
which is nothing but the discrete equivalent of $\|\cdot\|_{V_{h}}$. In fact, it is easy to see that $\|\mathbf{u}\|=\|u\|_{V_{h}}$ whenever $\mathbf{u}$ is the dof representation of $u$. Note that this norm can differ substantially from the Euclidean norm $$|\mathbf{u}|:=\sqrt{\mathbf{u}^{\top}\mathbf{u}}.$$ 
In particular, the two coincide if and only if $\varphi_{1},\dots,\varphi_{N_{h}}$ are orthonormal with respect to $\langle\cdot,\cdot\rangle_{V_{h}}$, in which case $\mass=\mathbb{I}$ is the identity matrix.
\\\\
With this setup, let us now introduce the parametric problem.
Let $\mub\in\mathbb{R}^{\ngeo}$ and $\nub\in\mathbb{R}^{\nphys}$ be two vectors of parameters. Ideally, we collect in $\mub$ all parameters that have a geometric or space-varying nature; all the remaining parameters, which typically concern the physical properties of the model at hand, are collected in $\nub$. We allow both $\mub$ and $\nub$ to vary within a suitable parameter space, herein assumed to be compact: we shall write $\mub\in\pgeo$ and $\nub\in\pphys.$ We consider a parametrized problem of the 
form: given $(\mub,\nub)\in\pgeo\times\pphys$ find $u\in V_{h}$ such that
\begin{equation}
\label{eq:fom}
\mathscr{R}(\mub,\nub,u) = 0,\end{equation}
where $\mathscr{R}: \pgeo\times\pphys\times V_{h}\to \mathbb{R}$ is a given parameter dependent nonlinear operator, inclusive of external quantities such as, e.g., boundary conditions or source terms. We think of \eqref{eq:fom} as a discretized parameter dependent stationary PDE, formulated in a weak or strong form, with $\mathscr{R}$ representing the norm of the PDE residual.

By leveraging the dof representation in \eqref{eq:fom-basis}, problem \eqref{eq:fom} naturally defines a map from $\pgeo\times\pphys\to\fomspace$ which maps every parameter combination onto the basis coefficients of the corresponding PDE solution, namely
$$\operator: (\mub,\nub)\mapsto \ufomp:=[\mathrm{u}^{(1)}_{\mub,\nub},\dots, \mathrm{u}^{(N_{h})}_{\mub,\nub}]$$
such that
$u_{\mub,\nub}:=\sum_{i=1}^{N_{h}}\mathrm{u}^{(i)}_{\mub,\nub}\varphi_{i}$
solves \eqref{eq:fom}. Our purpose is to provide an efficient approximation of this parameter-to-solution operator $\operator$, so that we can avoid repeated calls to the PDE solver when a large number of evaluations are required.
\\\\
In general, changes in model parameters may produce different effects on the PDE solution. Here, we address a specific scenario, which is easily explained via the concept of \textit{Kolmogorov $n$-width}. 
Given a set $S\subset\fomspace$, we define its Kolmogorov $n$-width $d_{n}(S)$ as the error achieved by its "best approximation" in terms of linear subspaces of dimension $n$, that is,
\begin{equation}
    \label{eq:kolmogorov}
    d_{n}(S):=\inf_{\VV\in\mathbb{R}^{N_{h}\times n}}\;\sup_{\ub\in S}\;\|\ub-\VV\VV^{\top}\mass\ub\|.
\end{equation}
Note in fact that if $\VV$ is $\mass$ orthonormal, which means that $\VV^{\top}\mass\VV$ is the identity matrix $n\times n$, then $\VV\VV^{\top}\mass\ub$ is the projection of $\ub$ onto $\spann(\VV)$. 
%
In this work, we focus our attention on those cases in which:
\begin{itemize}
    \item [A1.] the solution manifold $\solmanifold:=\{\ufomp\;|\;\mub\in\pgeo,\nub\in\pphys\}\subset V_{h}$ exhibits a slow-decay of the Kolmogorov $n$-width, e.g., $$d_{n}(\solmanifold)\le Cn^{-\alpha}$$ for some $C>0$ and $\alpha\in(0,1)$;
    \item [A2.] the geometrical/space-varying parameters are the main cause to (A.1), in the sense that the submanifolds $\solmanifold_{\mub}:=\{\ufomp\;|\;\nub\in\pphys\}\subset\solmanifold$, corresponding to $\mub$-slices of $\solmanifold$, have a uniformly fast decay, e.g.
    $$\sup_{\mub\in\pgeo}\;d_{n}(\solmanifold_{\mub}) \le C'n^{-\beta}$$
    for some $C'>0$ and $\beta\ge1$. 
\end{itemize}

This scenario can be extremely common in parametrized problems featuring space-parameter interaction. To illustrate this, we report below a simple, yet remarkable, example.

\subsection{An instructive example}

Let us consider a stationary 2D fluid flow, modeled through the Navier-Stokes equations, that occurs in the spatial domain depicted in Figure \ref{fig:navier-stokes-domain}.a. The fluid flows from left to right, passing around an almond-shaped obstacle whose diameter is roughly a fifth of the channel width. We consider a parametrized scenario depending on five scalar parameters:
\begin{itemize}
    \item $\nub=[\alpha,\beta]$, with $0\le\alpha,\beta\le10$, which parametrize the inflow condition at boundary $\Gamma_{in}$, imposing a Dirichlet condition of the form
    $$\boldsymbol{u}(0,y)=y(1-y)\left(\alpha e^{-100(y-0.25)^{2}}+\beta e^{-100(y-0.75)^{2}}\right)^{1/2}\quad\quad\forall y\in[0,1].$$
    Larger values of $\alpha$ correspond to a stronger flow at the bottom, while larger values of $\beta$ increase the fluid velocity at the top;
    \item $\mub=[\theta,x_{0},y_{0}]$, which parametrize the rotation, $0\le\theta\le2\pi$, and the location of the obstacle, $0.25\le x_{0},y_{0}\le 0.75$.
\end{itemize}

For simplicity, we consider a FOM solver based on a Finite Element discretization combined with a fictitious domain approach, so that, for all admissible parameter configurations $\mub,\nub$, the corresponding solutions can be sought within the same state space $V_{h}\cong\mathbb{R}^{N_{h}}$: for further details, we refer the interested reader to Section \ref{subsec:navier-stokes}.

Intuitively, it is clear that the two vectors of parameters, $\mub$ and $\nub$, play fundamentally different roles. The former are more geometrical in nature and can significantly affect the behavior of the solution in multiple ways. The variability introduced by the remaining parameters is, instead, much simpler to describe, as they can only distribute the flow intensity either at the top or at the bottom. For example, if we fix a geometric configuration $\mub$, and let $\nub$ vary, we can speculate that the corresponding fluid flows will be given, roughly speaking, by the superposition of two main modes: one describing the flow at the top and one describing the flow at the bottom. In this sense, the submanifold $\mathscr{S}_{\mub}$, which consists of all PDE solutions for fixed $\mub$ and variable $\nub$, should be well approximated by a small linear subspace. In contrast, if we allow $\mub$ to change, this nice behavior is likely to disappear.
\\\\
In particular, this is also what we observe in practice. 
Figure \ref{fig:navier-stokes-domain}.b shows the decay of the projection error for increasingly large linear subspaces, comparing the behavior of the whole solution manifold $\mathscr{S}$ with that of the submanifolds $\mathscr{S}_{\mub}$, computed for different values of $\mub$. Here, in order to estimate these trends, we relied on Proper Orthogonal Decomposition: while we refrain from delving into technical details in this context, readers keen on a more rigorous explanation can refer to Section \ref{sec:exp1}.

Even from a qualitative point of view, it is clear that Figure \ref{fig:navier-stokes-domain}.b confirms our intuition. Furthermore, a more thorough examination suggests that Assumptions A1 and A2 are true: in fact, a least-square regression in logarithmic scale yields a decay rate of 0.688 for $\mathscr{S}$, and a much steeper slope for the submanifolds $\mathscr{S}_{\mub}$ (exponent ranging from 1.958 to 2.454).
\\\\
In general, although its simplicity, this case study highlights how certain problems strongly call for an adaptive basis approach and how common this situation can be.

\begin{figure}[tb]
    \centering
    \includegraphics[width=0.45\textwidth]{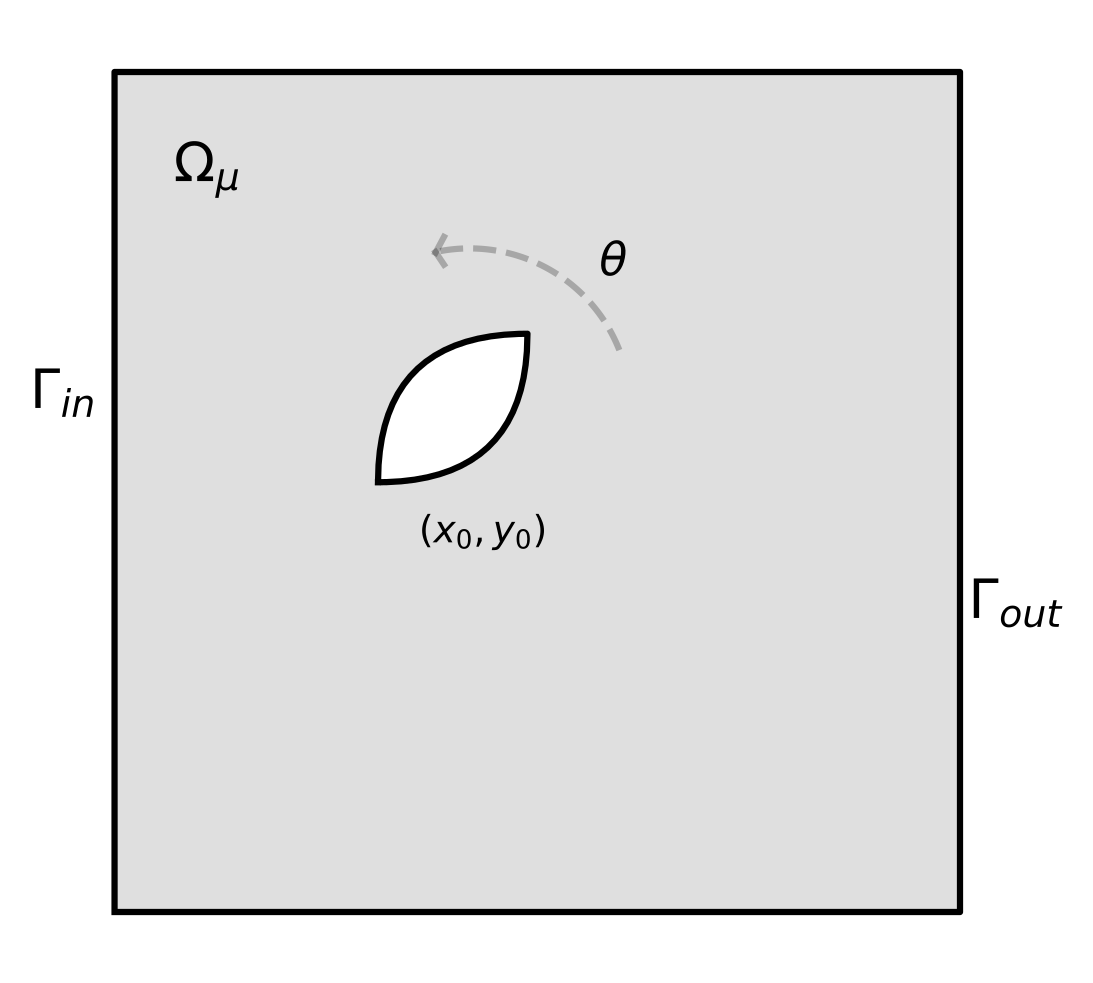}
   \includegraphics[width=0.45\textwidth]{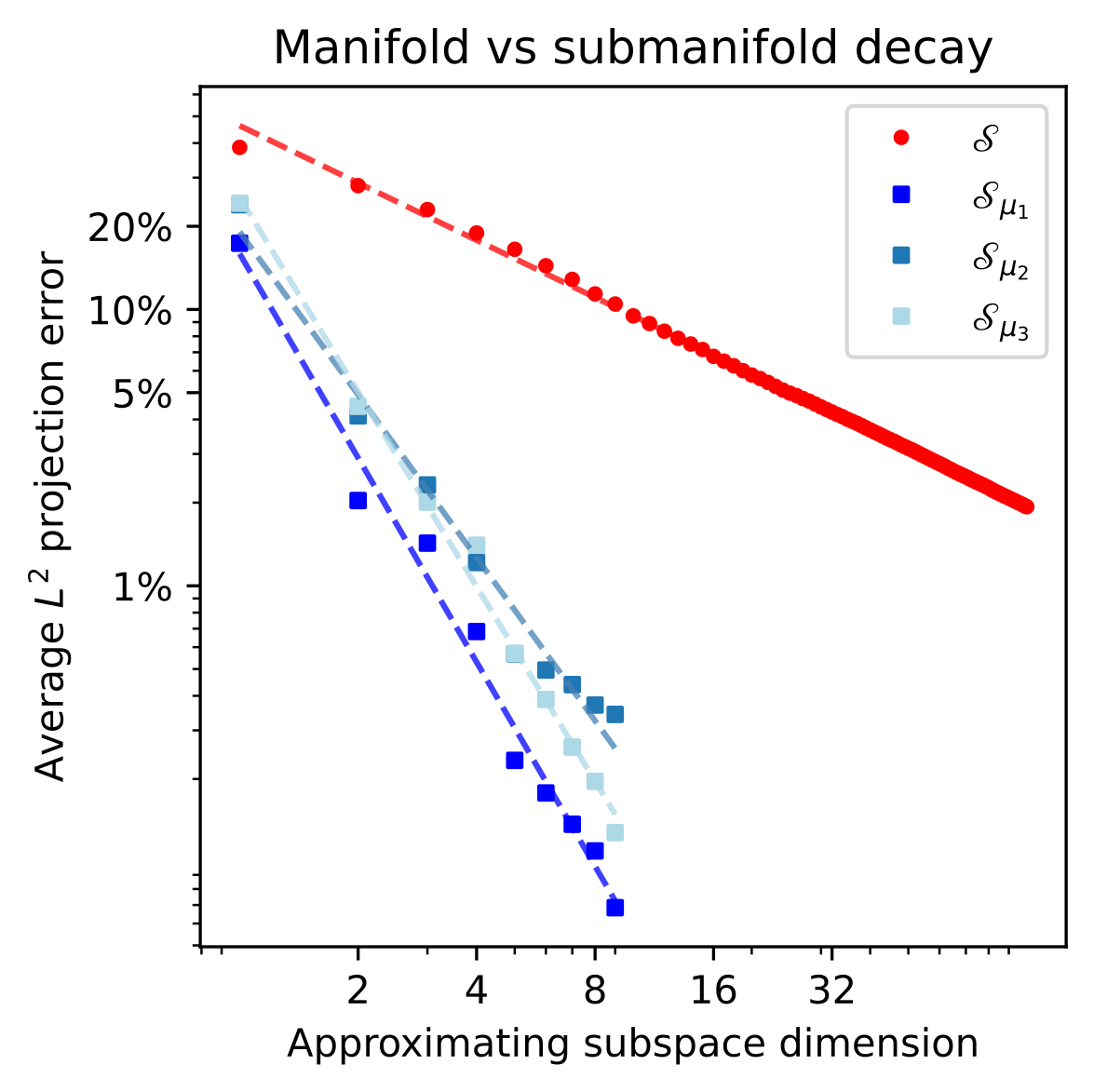}
    \caption{Spatial domain (left) and projection error analysis (right) for the Navier-Stokes case study, Sections \ref{subsec:example} and \ref{subsec:navier-stokes}. Left: the domain $\Omega_{\mub}$ is obtained by removing an almond-shaped object from the unit square $(0,1)^{2}$. The parameters $\mub=[\theta,x_{0},y_{0}]$ determine the rotation and the position of the obstacle. Right: decay of the projection error for increasingly larger subspaces, highlighting the differences between the solution manifold $\mathscr{S}$ and its $\mub$-slices $\mathscr{S}_{\mub}$. Here, $\mub_{1}=[0,0.5,0.5]$, $\mub_{2}=[\pi/4,0.4,0.6]$ and $\mub_{3}=[\pi/2,0.7,0.3].$}
    \label{fig:navier-stokes-domain}
\end{figure}

\subsection{Existing techniques in the ROM literature}

To further motivate the adoption of a novel approach, it may be worth discussing how traditional ROMs could tackle problems such as the one described in Section \ref{subsec:example}, emphasizing their inherent shortcomings and limitations. 
In general, as we already mentioned, ROMs based on linear reduction techniques would face major difficulties because of the harsh behavior in the Kolmogorov $n$-width. For instance, non-intrusive approaches falling into this category, such as POD-NN \cite{hesthaven2018non}, POD-GPR \cite{guo2018reduced}, or POD-DeepONet \cite{lu2022comprehensive}, would be forced to use a large number of basis functions, resulting in highly complex ROMs that may require many training data; see, e.g., \cite{hesthaven2018non}. In addition to this, intrusive ROMs such as POD-Galerkin \cite{hesthaven2016certified, quarteroni2016reduced} would face additional challenges posed by the presence of nonlinear terms in the governing equations (as with the Navier-Stokes equations).

Clearly, ROMs based on nonlinear dimensionality reduction algorithms, such as deep autoencoders, may mitigate this fact. However, these approaches would completely ignore the peculiar structure of the problem, treating the solution manifold as a whole. Due to this, fully nonlinear methods based on the encoding-decoding paradigm would require at least $\dim(\pgeo)+\dim(\pphys)=p+p'$ latent variables to effectively represent PDE solutions, cf. \cite[Theorem 3]{franco2023deep}. Furthermore, coupling these methods with intrusive techniques can be highly nontrivial, which often brings domain practitioners to completely forget about the underlying physics of the system, pushing them towards purely data-driven alternatives.
Instead, given the particular structure characterizing such problems, a far more natural approach would be to rely on adaptive basis techniques. 

In this regard, the ROM literature provides several alternatives, which, to the best of our knowledge, can be summarized as falling into one of the following categories: dictionary-based approaches \cite{bonito2021nonlinear, geelen2022localized, pagani2018numerical}, basis interpolation methods \cite{amsallem2008interpolation}, and time-adaptive techniques \review{\cite{peherstorfer2020model, hesthaven2022rank, hesthaven2023adaptive}}.  Here, given our focus on stationary PDEs, we shall limit our discussion to the first two classes. 
\review{Particularly, our attention will be limited to \textit{offline} adaptive approaches, that is, ROMs relying on a family of local basis that are learned during training; \textit{online} adaptive schemes, in contrast, implement suitable strategies that allow ROMs to flexibly adapt as new parametric configurations are encountered, or as time flows. Thus, one of their main purposes is to anticipate unseen behaviors. We refer the interested reader to \cite{peherstorfer2015online, singh2023lookahead}.}
\\\\
For dictionary-based ROMs, the idea is to approximate the solution manifold through a finite number $c>1$ of (distinct) low-dimensional subspaces, typically represented by a list of orthonormal matrices $\mathbb{V}_{1},\dots,\mathbb{V}_{c}\in\mathbb{R}^{N_{h}\times n}$. These subspaces can be identified in several ways, for example, by relying on clustering algorithms, as in \cite{pagani2018numerical}, or domain decomposition techniques, as in \cite{bonito2021nonlinear}. In both cases, the output of the ROM will depend discontinuously on $\mub$, which, in some situations, can be rather undesirable. Furthermore, large parameter spaces can pose significant challenges, as clustering-based ROMs need to address the classification problem 
$$\mub\;\mapsto\;\argmin_{i\in\{1,\dots, c\}}\;\sup_{\nub\in\pphys}\|\ub_{\mub,\nub}-\mathbb{V}_{i}\mathbb{V}_{i}^{\top}\ub_{\mub,\nub}\|,$$
while domain decomposition methods can struggle in partitioning $\pgeo.$

On the contrary, basis interpolation techniques, such as the angle interpolation method and its generalizations \cite{amsallem2008interpolation}, define a continuously adaptive basis $\mub\mapsto\mathbf{V}_{\mub}$ by integrating suitable interpolation routines. That is, starting from a finite set of subspaces, much like in the previous case, they construct labeled pairs of the form $\{(\mub_{i},\mathbb{V}_{i})\}_{i=1}^{c}$. The data is then interpolated in a suitable way so that, given $\mub\in\pgeo$, the corresponding local basis $\mathbf{V}_{\mub}$ can be easily computed. To ensure that $\mathbf{V}_{\mub}$ is an orthonormal basis, such interpolation is carried out across the so-called Grassmann manifold (cf. Section \ref{subsec:adaptivity}) rather than in the Euclidean space $\mathbb{R}^{N_{h}\times n}$. For example, in the case of 1-dimensional parameter spaces, if $\mu_{1}<\mu<\mu_{2}$, the interpolation routine combines the local basis $\mathbb{V}_{1}$ and $\mathbb{V}_{2}$ using the formula

$$\mathbf{V}_{\mu}:=\mathbb{V}_{1}\mathbb{X}\cos\left[\left(\frac{\mu-\mu_1}{\mu_2-\mu_1}\right)\tan^{-1}(\mathbb{S})\right]+\mathbb{Y}\sin\left[\left(\frac{\mu-\mu_1}{\mu_2-\mu_1}\right)\tan^{-1}(\mathbb{S})\right]$$
where $\mathbb{Y}\mathbb{S}\mathbb{X}^{\top}=(\mathbb{I}-\mathbb{V}_{1}\mathbb{V}_{1}^{\top})\mathbb{V}_{2}(\mathbb{V}_{1}^{\top}\mathbb{V}_{2})^{-1}$ are computed via SVD, and trigonometric functions act component-wise: see, e.g., \cite{amsallem2008interpolation}. However, although fairly general, these approaches can have a hard time whenever $p$ becomes mildly large, facing the major limitations posed by interpolation methods in high-dimensions, such as the well-known curse of dimensionality.

Here, in order to overcome these limitations, we propose an alternative approach based on deep neural networks, \review{powerful approximators that have gained traction across many scientific domains, including, most recently, adaptive ROMs (see, e.g., \cite{berman2024colora}).}

\label{subsec:example}

\section{Deep Orthogonal Decomposition for dimensionality reduction}
\label{sec:dod}

We devote this Section to the presentation of the DOD algorithm, first discussing its applicability in the broader context of dimensionality reduction. A subsequent discussion on the use of DOD for reduced order modeling will be provided in Section \ref{sec:dodrom}.
Following the notation in Section \ref{sec:setup}, let $$\operator:\pgeo\times\pphys\ni(\mub,\nub)\mapsto \ufomp\in\mathbb{R}^{N_{h}}$$
be a parameter-to-solution operator, where the model parameters have been subdivided into two groups: those responsible for the slow-decay in the Kolmogorov $n$-width of $\solmanifold=\operator(\pgeo\times\pphys)$, collected in the vector $\mub$, and those whose effect is rapidly captured by linear combinations, stored in $\nub$. The idea is to construct a deep neural network model, called the DOD, 
which is capable of parametrizing a suitable modal basis $\dod=\dod_{\mub}$ that changes adaptively with $\mub$ in a highly efficient manner. The reason for this is that we would like to take advantage of the nice behavior of the variables $\nub$ as much as possible, while simultaneously isolating the difficulty of handling $\mub$. 
In practice, having fixed a latent dimension $n$, we seek for a suitable DNN architecture
\begin{equation}
    \dod:\pgeo\longrightarrow\mathbb{R}^{N_{h}\times n}
\end{equation}
$$\;\;\mub\longrightarrow\dod_{\mub}$$
such that, for any given $\mub\in\pgeo$, the matrix $\dod_{\mub}$ acts as a good local basis for the submanifold $\solmanifold_{\mub}=\{\ufomp\}_{\nub\in\pphys}=\operator(\mub,\pphys)$. From a quantitative point of view, this boils down to requiring 
$$\ufomp\approx\dod_{\mub}\dod_{\mub}^{\top}\mass\ufomp$$
in $\|\cdot\|$-norm. Then, if the projection error is sufficiently small, the $n$-dimensional vector of DOD coefficients $$\mathbf{c}_{\mub,\nub}:=\dod_{\mub}^{\top}\mass\ufomp$$
can be used as a proxy for the overall highfidelity solution $\ufomp\in\mathbb{R}^{\fomdim}.$ In fact, the latter can be easily recovered via the lifting $\mathbf{c}_{\mub,\nub}\mapsto\dod_{\mub}\mathbf{c}_{\mub,\nub}.$

As exemplified by the theoretical result below, this procedure is mathematically sound, and further motivated whenever Assumptions A1 and A2 are satisfied. We defer the proof to the Appendix so as to avoid an excessive deviation from the main topic. In what follows, we use $\mathbb{E}$ to denote the expectation operator. More precisely, given two probability distributions $\mathbb{P}$ and $\mathbb{Q}$, defined over $\pgeo$ and $\pphys$, respectively, for any measurable map $f:\pgeo\times\pphys\to[0,+\infty]$ we let
$$\mathbb{E}_{\mub,\nub}[f(\mub,\nub)]:=\int_{\pgeo}\int_{\pphys} f(\mub,\nub)\mathbb{P}(d\mub)\mathbb{Q}(d\nub).$$
Similarly, given $g:\pgeo\to[0,+\infty]$, we set $\mathbb{E}_{\mub}[g(\mub)]:=\int_{\pgeo}g(\mub)\mathbb{P}(d\mub).$

\begin{theorem}
\label{theorem:dod}
Let $\pgeo\subset\mathbb{R}^{p}$ and $\pphys\subset\mathbb{R}^{p'}$ be two compact sets, equipped, respectively, with two probability distributions, $\mathbb{P}$ and $\mathbb{Q}$.
Let $$\operator: \pgeo\times\pphys\ni(\mub,\nub)\to \ufomp\in\mathbb{R}^{\fomdim}$$ be continuous. For each $\mub\in\Theta$, let $\solmanifold_{\mub}:=\{\ufomp\}_{\nub\in\pphys}\subset\operator(\pgeo\times\pphys)$ be the $\mub$-submanifold in the image of $\operator$. Let $\mass$ be the Gram matrix associated with a given inner product in $\fomspace$, and let $\|\cdot\|$ be the corresponding norm.
Then, for every $\varepsilon>0$ there exists a ReLU matrix-valued deep neural network $\dod:\mathbb{R}^{p}\to\mathbb{R}^{\fomdim\times n}$ such that
$$\mathbb{E}_{\mub,\nub}\|\ufomp-\dod_{\mub}\dod_{\mub}^{\top}\mass\ufomp\|<\varepsilon+\mathbb{E}_{\mub}\left[d_{n}(\solmanifold_{\mub})\right],$$
where $\dod_{\mub}:=\dod(\mub)$.\end{theorem}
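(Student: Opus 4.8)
The strategy is to first solve an idealized version of the problem in which, for each $\mub$, one is allowed to use a genuinely near-optimal local subspace, and only afterwards to replace the resulting (possibly irregular) matrix field by a ReLU network. The first step exploits the gap between the supremum that defines the Kolmogorov width and the $\nub$-expectation appearing on the left-hand side. Fix $\delta>0$. For each $\mub\in\pgeo$, the definition of $d_{n}(\solmanifold_{\mub})$ as an infimum guarantees an $\mass$-orthonormal matrix $\VV_{\mub}\in\mathbb{R}^{\fomdim\times n}$ (so that $\VV_{\mub}\VV_{\mub}^{\top}\mass$ is the genuine $\mass$-orthogonal projection onto $\spann(\VV_{\mub})$) with
$$\sup_{\nub\in\pphys}\|\ufomp-\VV_{\mub}\VV_{\mub}^{\top}\mass\ufomp\|\le d_{n}(\solmanifold_{\mub})+\delta.$$
Since the supremum over $\nub$ dominates the $\mathbb{Q}$-expectation, integrating in $\mub$ at once gives $\mathbb{E}_{\mub,\nub}\|\ufomp-\VV_{\mub}\VV_{\mub}^{\top}\mass\ufomp\|\le \mathbb{E}_{\mub}[d_{n}(\solmanifold_{\mub})]+\delta$. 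It thus remains to build a ReLU network $\dod$ whose output $\dod_{\mub}$ is close enough to $\VV_{\mub}$ that the projection error is perturbed by less than $\varepsilon-\delta$ in expectation.

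The central difficulty is that $\mub\mapsto\VV_{\mub}$ need not be continuous: the optimal $n$-dimensional subspace can jump as $\mub$ varies (for instance where the relevant singular values of the slices coincide), so universal approximation cannot be invoked directly. To circumvent this I would first extract a \emph{measurable} selection $\mub\mapsto\VV_{\mub}$ from the nonempty, closed-valued correspondence of near-optimal $\mass$-orthonormal frames, via a standard measurable-selection theorem; since the target values live in the compact set of $\mass$-orthonormal $n$-frames, measurability is all that is required. Then, by Lusin's theorem applied to $\mathbb{P}$, for every $\eta>0$ there is a compact set $K_{\eta}\subset\pgeo$ with $\mathbb{P}(\pgeo\setminus K_{\eta})<\eta$ on which $\mub\mapsto\VV_{\mub}$ is continuous. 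An equivalent route covers $\pgeo$ by small balls, uses one near-optimal basis per ball, and closes the resulting Riemann-sum gap through continuity of $\mub\mapsto d_{n}(\solmanifold_{\mub})$; the latter follows because uniform continuity of $\operator$ makes the slices $\solmanifold_{\mub}$ vary continuously in Hausdorff distance, and Kolmogorov widths are $1$-Lipschitz with respect to that distance.

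Next I would invoke the universal approximation theorem for ReLU networks, regarding the continuous matrix field on $K_{\eta}$ as a continuous $\mathbb{R}^{\fomdim\cdot n}$-valued map, to obtain $\dod$ with $\sup_{\mub\in K_{\eta}}\|\dod_{\mub}-\VV_{\mub}\|$ arbitrarily small and with uniformly bounded output (the target is bounded, since orthonormal frames have bounded entries). On $K_{\eta}$ the error is controlled by the Lipschitz dependence of the quadratic map $\dod\mapsto\dod\dod^{\top}\mass$ on bounded sets: expanding $\dod_{\mub}=\VV_{\mub}+(\dod_{\mub}-\VV_{\mub})$ and using that $\ufomp$ ranges over a compact, hence bounded, set yields $\|\dod_{\mub}\dod_{\mub}^{\top}\mass\ufomp-\VV_{\mub}\VV_{\mub}^{\top}\mass\ufomp\|\le C\|\dod_{\mub}-\VV_{\mub}\|$ uniformly. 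On the exceptional set $\pgeo\setminus K_{\eta}$ the error is bounded by a fixed constant $M$ (again by compactness and the boundedness of $\dod$), contributing at most $M\eta$.

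Assembling the pieces by splitting the domain into $K_{\eta}$ and its complement gives
\begin{equation*}
\mathbb{E}_{\mub,\nub}\|\ufomp-\dod_{\mub}\dod_{\mub}^{\top}\mass\ufomp\|\le \mathbb{E}_{\mub}[d_{n}(\solmanifold_{\mub})]+\delta+C\sup_{\mub\in K_{\eta}}\|\dod_{\mub}-\VV_{\mub}\|+M\eta,
\end{equation*}
and one chooses $\delta$, $\eta$, and the network accuracy small enough that the last three terms sum to less than $\varepsilon$. I expect the regularity step to be the main obstacle: honestly handling the discontinuity of the optimal-basis map is precisely what forces the measurable-selection/Lusin argument (or the Hausdorff-continuity of $d_{n}$), and it is also the reason the conclusion is stated as an \emph{expectation} rather than a uniform bound.
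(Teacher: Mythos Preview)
Your proposal is correct and shares the paper's essential architecture: obtain a (measurable but possibly discontinuous) near-optimal basis map $\mub\mapsto\VV_{\mub}$ via a measurable-selection theorem, then approximate this map by a ReLU network and transfer the approximation through the continuity of the projection-error functional. The packaging differs in two places. First, the paper encapsulates the $\nub$-dependence in the scalar functional $J(\mub,\mathbb{V})=\mathbb{E}_{\nub}\|\ufomp-\mathbb{V}\mathbb{V}^{\top}\mass\ufomp\|$, optimizes over the full box $[-1,1]^{\fomdim\times n}$ rather than the Stiefel manifold of $\mass$-orthonormal frames, and uses the uniform continuity of $J$ on that compact set in place of your explicit Lipschitz estimate for $\mathbb{V}\mapsto\mathbb{V}\mathbb{V}^{\top}\mass$. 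Second, for the network-approximation step the paper invokes Hornik-type $L^{1}(\mathbb{P})$ density of ReLU networks directly (followed by a one-layer clipping to keep outputs in $[-1,1]^{\fomdim\times n}$), which absorbs your Lusin-plus-uniform-approximation argument into a single lemma. Your route makes the error budget more transparent through the $K_{\eta}$ versus $\pgeo\setminus K_{\eta}$ split, while the paper's is a bit terser; both arrive at the same bound by the same mechanism.
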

\begin{proof}
    We refer the interested reader to the Appendix.
\end{proof}

Clearly, Theorem \ref{theorem:dod} is only an existence result. In particular, it does not provide an answer to three main questions: i) how to construct such a network, ii) how to train it, and iii) whether the overall approach is computationally feasible. We shall start by answering the first two questions, while we leave the third one to the numerical experiments, Section \ref{sec:exp1}.

\begin{remark}
In this work, we adopt a fully algebraic perspective, as that can be more natural in the context of model order reduction. However, to better understand the overall idea, it may be useful to discuss the implications of the DOD approach at the continuous level. To this end, we note that the parameter-dependent PDE solution can be described as a map $u=u(\xb,\mub,\nub)$, depending explicitly on the space variable and on the model parameters. At the continuous level, a classical POD decomposition corresponds to a separation of variable approach of the form
\begin{equation}
    \label{eq:poddecoupling}
    u(\xb,\mub,\nub)\approx\sum_{i=1}^{n}v_{i}(\xb)\phi_i(\mub,\nub),
\end{equation}
where $n$ is the reduced dimension, $v_{i}$ corresponds to the $i$th mode (represented by the $i$th column in the POD matrix), while $\phi_{i}(\mub,\nub)$ is the corresponding parameter dependent coefficient. With this formalism, the DOD approach can be regarded as
\begin{equation}
    \label{eq:doddecoupling}
    u(\xb,\mub,\nub)\approx\sum_{i=1}^{n}v_{i}(\xb,\mub)\phi_i(\mub,\nub),
\end{equation}
effectively presenting a $\mub$-adaptive basis. From an intuitive point of view, Eq. \eqref{eq:doddecoupling} is emphasizing the fact that the "space-interacting" parameters, $\mub$, should not be decoupled from the space variable, $\xb$, when approximating $u$.
\end{remark}

\subsection{Architecture design}
\label{sec:design}
Having to deal with remarkably large dimensions, from $p$ to $\fomdim\times n$, the construction of a DOD architecture requires some discussion. In principle, the high dimension at output could be tackled by relying on suitable layer types, specifically designed for handling high-dimensional data, such as, e.g., convolutional models (CNNs) \cite{franco2023approximation}, graph neural networks (GNNs) \cite{battaglia2018relational}, or mesh-informed neural networks (MINNs) \cite{franco2023mesh}. However, all these approaches have limited scalability: as of today, using these architectures to address problems with $N_{h}\sim 10^{4}-10^{6}$ degrees of freedom requires a significant amount of computational resources, often beyond practical feasibility.

For this reason, and to be as general as possible, we propose a simpler approach, based on the introduction of a suitable \textit{ambient space}, approximating the original state space. Simply put, we start by introducing an ambient matrix $\ambient\in\mathbb{R}^{\fomdim\times \nambient}$, where $\nambient$ is smaller than $N_{h}$ but still fairly large, e.g., $N_{A}\sim 10^{2}-10^{3}$, such that $\ambient$ is $\mass$-orthonormal and
\begin{equation*}
    \ufomp\approx\ambient\ambient^{\top}\mass\ufomp,
\end{equation*}
with a given tolerance. In practice, $\ambient$ can be constructed by computing a preliminary POD over the training snapshots: see, e.g., Algorithm \ref{algo:ambient}. However, we remark that this is only a preliminary reduction whose purpose is to make the FOM data more manageable; by no means, we assume $N_{A}$ to be small: the actual reduction in dimensionality will be carried out by the DOD. In a way, this intermediate step corresponds to rewriting FOM solutions in a more convenient, problem-specific, way: in fact, while the FOM basis is defined \textit{a priori}, the ambient space is constructed \textit{a posteriori} by leveraging on the training data.
We also note that, in spirit, this is the same trick adopted by other 
techniques, such as POD-DL-ROM \cite{fresca2021pod} 
and POD-DeepONet \cite{lu2022comprehensive}.
\begin{algorithm}[t]
\SetAlgoLined
\normalem
\SetKwInOut{Input}{Input}
\SetKwInOut{Output}{Output}
\vspace{0.25em}
\Input{List of training simulations $[\ub_{1},\dots,\ub_{\ntrain}]$, Gram matrix $\mass$, ambient dimension $N_{A}$.
\vspace{0.5em}
}

\Output{Ambient matrix $\ambient$.
\vspace{0.75em}
}

 \textit{// Preprocessing}\vspace{0.4em}
 
 $\mathbb{U}\leftarrow$ stack $[\ub_{1},\dots,\ub_{\ntrain}]$\vspace{0.4em}

 $\mathbb{M}\leftarrow \mathbb{U}^{\top}\mass\mathbb{U}$\vspace{0.5cm}

\textit{// Eigenvalues and eigenvectors, with $\lambda_{i}\ge\lambda_{i+1}$}\vspace{0.4em}

 $\lambda_{1},\dots,\lambda_{\ntrain}$ and $
\boldsymbol{\xi}_{1},\dots,\boldsymbol{\xi}_{\ntrain}\leftarrow$ eig$(\mathbb{M})$\vspace{0.5cm}

\textit{// Modes truncation}\vspace{0.4em}

$\boldsymbol{\Lambda}\leftarrow$ diag$(\lambda_{1},\dots,\lambda_{N_{A}})$ \vspace{0.4em}

$\Xi\leftarrow$ stack $[\boldsymbol{\xi}_{1},\dots,\boldsymbol{\xi}_{N_{A}}]$\vspace{0.4em}

$\ambient\leftarrow \mathbb{U}\Xi\boldsymbol{\Lambda}^{-1/2}.$\vspace{0.5em}

\Return{$\ambient$}\vspace{0.25em}

 \caption{\label{algo:ambient}Construction of the ambient space via generalized POD (arbitrary inner product).}
 
\end{algorithm}
The main advantage of this maneuver is that the DOD network can now be constructed as
$$\dod_{\mub}:=\ambient\tilde{\dod}_{\mub},$$
with $\tilde{\dod}:\mathbb{R}^{\ngeo}\to\mathbb{R}^{\nambient\times n}$ being the learnable component of the architecture. In particular, since $N_{A}$ can be orders of magnitude smaller than $N_{h}$, adopting this approach can substantially reduce the number of trainable parameters in the DOD, thus simplifying its design and optimization.
%
We call $\tilde{\dod}$ the \textit{inner module} of the DOD. 

Note that $\dod_{\mub}$ is $\mass$-orthonormal if and only if $\tilde{\dod}_{\mub}$ is orthonormal in the Euclidean sense, in fact,
$$\dod_{\mub}^{\top}\mass\dod_{\mub}=\tilde{\dod}_{\mub}^{\top}\ambient^{\top}\mass\ambient\tilde{\dod}_{\mub}=\tilde{\dod}_{\mub}^{\top}\tilde{\dod}_{\mub}.$$
To construct the matrix-valued network $\tilde{\dod}$, we use a composite architecture comprised of 
\begin{itemize}
    \item[i)] a \textit{seed} module, $s:\mathbb{R}^{p}\to\mathbb{R}^{l}$, whose purpose is to pre-process the input parameters by mapping them onto a suitable feature space;
    \item[ii)] a collection of \textit{root} modules, $R_{1},\dots,R_{n}:\mathbb{R}^{l}\to\mathbb{R}^{\nambient}$ operating in parallel, whose purpose is to compute the several columns of the (inner) DOD projector $\tilde{\dod}$;
    \item [iii)] 
    an \textit{ORTH unit}, that is, a nonlearnable block ensuring orthormality of the final output. The latter accepts a matrix $\mathbb{W}\in\mathbb{R}^{\nambient\times n}$ and returns a corresponding orthonormal matrix $\tilde{\mathbb{W}}:=\textnormal{ORTH}(\mathbb{W})\in\mathbb{R}^{\nambient\times n}$ such that $\spann(\mathbb{W})=\spann(\tilde{\mathbb{W}}).$ In practice, this can be achieved in many equivalent ways, e.g. via reduced QR decomposition \cite{golub2013matrix} or via Gram-Schmidt orthogonalization \cite{quarteroni2006numerical}.
\end{itemize}
Both the seed and the root components are implemented via classical deep feed forward neural networks. The overall workflow can be summarized as in Figure \ref{fig:dod} or, in formula, as
$$\dod_{\mub}=\ambient\text{ORTH}\left(\left[R_{1}(s_{\mub}),\dots,R_{n}(s_{\mub})\right]\right),$$
where $s_{\mub}:=s(\mub)$.

\begin{remark}
    The ambient space is only a practical expedient that we have introduced in order to tackle arbitrarily large FOMs. However, if $N_{h}$ is reasonably small, this step can be omitted and one may work directly at the FOM level. In practice, if $\tilde{\varphi}_{1},\dots,\tilde{\varphi}_{N_{h}}$ is an orthonormal basis derived from $\varphi_{1},\dots,\varphi_{N_{h}}$, that is, from the original FOM basis, this would be equivalent to setting $\ambient:=[\tilde{\boldsymbol{\varphi}}_{1},\dots,\tilde{\boldsymbol{\varphi}}_{N_{h}}]$,    
    so that $\ambient^{\top}\mass\ambient=\mathbb{I}$ and $\textnormal{span}(\ambient)=\mathbb{R}^{N_{h}}$.
\end{remark}

\begin{figure}[t!]
    \centering
    \includegraphics[width=\textwidth]{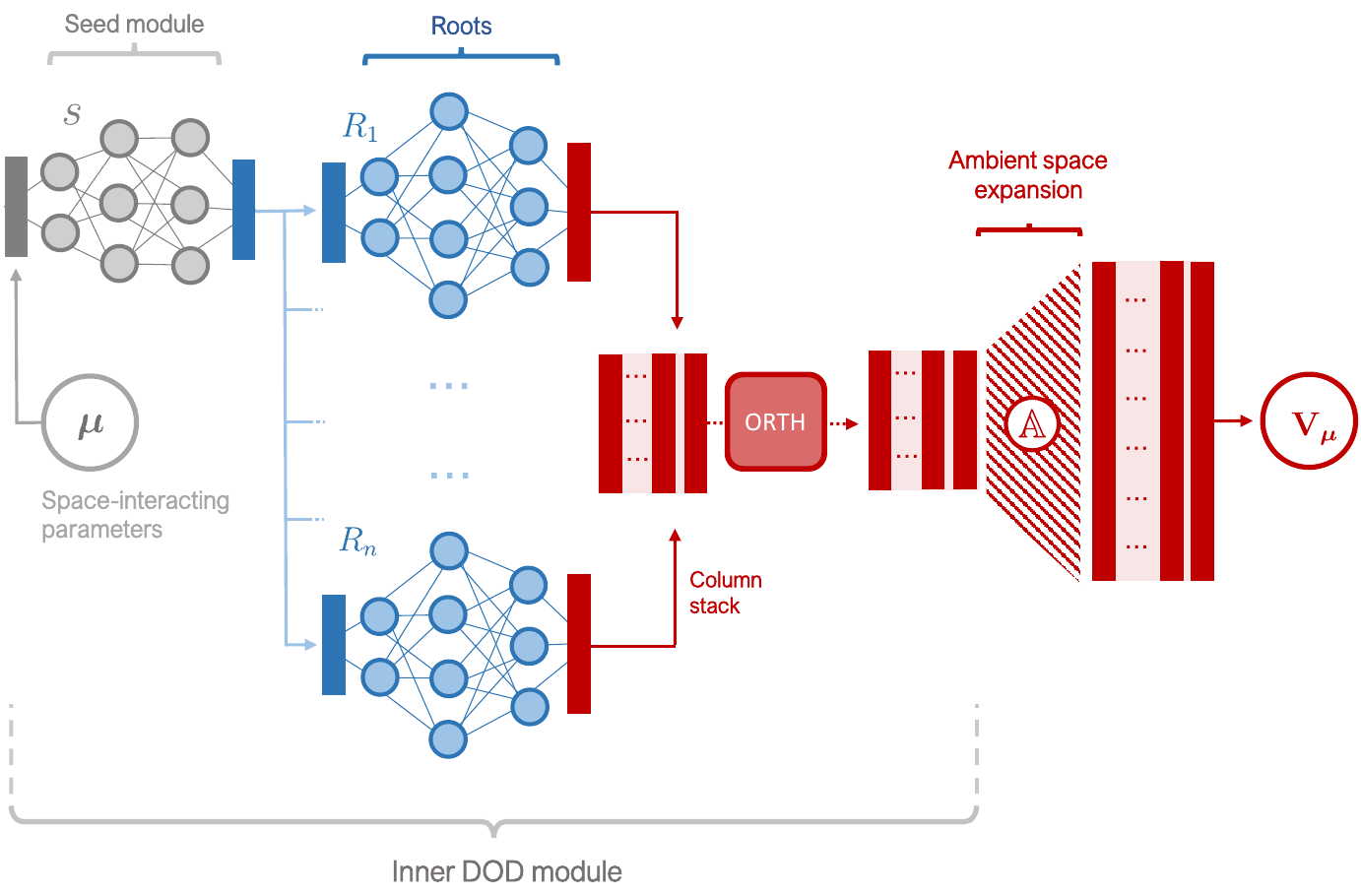}
    \caption{Sketch of a DOD architecture.}
    \label{fig:dod}
\end{figure}

\subsection{Model training}
\label{subsec:training}
In order to learn the DOD basis, we propose a supervised training strategy based on a variational principle, where the DOD architecture $\dod$ is trained by minimizing the reconstruction error over the training data, as depicted in Algorithm \ref{algo:train1}. In other words, we learn the DOD by minimizing the loss function below,
\begin{equation}
    \label{eq:train1}
    \loss(\dod):=\frac{1}{\ntrain}\sum_{i=1}^{\ntrain}\|\ub_{\mub_{i},\nub_{i}}-\dod_{\mub_{i}}\dod_{\mub_{i}}^{\top}\mass\ub_{\mub_{i},\nub_{i}}\|^{2},
\end{equation}
where $\{\mub_{i},\nub_{i},\ub_{\mub_{i},\nub_{i}}\}_{i=1}^{\ntrain}\subset\pgeo\times\pphys\times \mathbb{R}^{N_{h}}$ are highfidelity samples generated -randomly- by repeated calls to the FOM solver.
This approach is fairly intuitive, as it defines the DOD projector following the same minimization principle of POD. 

In practice, since optimizing \eqref{eq:train1} can be computationally demanding, we can take advantage of the existence of the ambient space, $\mathbb{A}$, in order to ease computational effort. In fact, minimizing \eqref{eq:train1} is equivalent to minimizing
\begin{multline}
    \label{eq:train2}
    \loss_{\ambient}(\tilde{\dod}):=\frac{1}{\ntrain}\sum_{i=1}^{\ntrain}|\ambient^{\top}\mass\ub_{\mub_{i},\nub_{i}}-\tilde{\dod}_{\mub_{i}}\tilde{\dod}_{\mub_{i}}^{\top}\ambient^{\top}\mass\ub_{\mub_{i},\nub_{i}}|^{2}=\\=
    \frac{1}{\ntrain}\sum_{i=1}^{\ntrain}|\tilde{\ub}_{\mub_{i},\nub_{i}}-\tilde{\dod}_{\mub_{i}}\tilde{\dod}_{\mub_{i}}^{\top}\tilde{\ub}_{\mub_{i},\nub_{i}}|^{2},
\end{multline}
where we recall that $|\cdot|$ denotes the Euclidean norm. Indeed, the two loss functions only differ by a constant: see Lemma \ref{lemma:equiv} in the following.

\begin{lemma}
    \label{lemma:equiv}
    Let $\ambient\in\mathbb{R}^{\fomdim\times\nambient}$ be $\mass$-orthonormal. Fix a reduced dimension $ n\le \nambient$ and let $\tilde{\dod}:\mathbb{R}^{\ngeo}\to\mathbb{R}^{\nambient\times n}$ be any matrix-valued map. Let $\dod_{\mub}:=\ambient\tilde{\dod}_{\mub}$. Then
    \begin{equation}
    \label{eq:equiv}
    \loss(\dod_{\mub})=c_{\ambient}+\loss_{\ambient}(\tilde{\dod}),\end{equation}
    where $c_{\ambient}>0$ is a constant depending on $\ambient$ and on the training data.
\end{lemma}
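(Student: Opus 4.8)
The plan is to show that each summand of $\loss(\dod)$ splits, via an $\mass$-orthogonal decomposition, into a piece that is independent of $\tilde{\dod}$ and a piece that coincides exactly with the corresponding summand of $\loss_{\ambient}(\tilde{\dod})$. Fix an index $i$ and abbreviate $\ub:=\ub_{\mub_i,\nub_i}$, $\tilde{\ub}:=\ambient^\top\mass\ub$, and $\tilde{\dod}:=\tilde{\dod}_{\mub_i}$. First I would use the factorization $\dod_{\mub_i}=\ambient\tilde{\dod}$ to rewrite the reconstruction as $\dod_{\mub_i}\dod_{\mub_i}^\top\mass\ub=\ambient\tilde{\dod}\tilde{\dod}^\top\ambient^\top\mass\ub=\ambient\tilde{\dod}\tilde{\dod}^\top\tilde{\ub}$, so that the residual reads $\ub-\ambient\tilde{\dod}\tilde{\dod}^\top\tilde{\ub}$.

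Next I would introduce the $\mass$-orthogonal projector onto $\spann(\ambient)$, namely $\ambient\ambient^\top\mass$ (this is indeed the orthogonal projector precisely because $\ambient^\top\mass\ambient=\mathbb{I}$), and split the residual as
\[
\ub-\ambient\tilde{\dod}\tilde{\dod}^\top\tilde{\ub}
=\underbrace{\bigl(\ub-\ambient\tilde{\ub}\bigr)}_{\perp\,\spann(\ambient)}
+\underbrace{\ambient\bigl(\tilde{\ub}-\tilde{\dod}\tilde{\dod}^\top\tilde{\ub}\bigr)}_{\in\,\spann(\ambient)}.
\]
The first summand equals $(\mathbb{I}-\ambient\ambient^\top\mass)\ub$ and is $\mass$-orthogonal to $\spann(\ambient)$ by construction of the projector, while the second lies in $\spann(\ambient)$; hence the two are $\mass$-orthogonal and the Pythagorean identity in the $\|\cdot\|$-norm yields
\[
\|\ub-\dod_{\mub_i}\dod_{\mub_i}^\top\mass\ub\|^2
=\|\ub-\ambient\tilde{\ub}\|^2
+\|\ambient\bigl(\tilde{\ub}-\tilde{\dod}\tilde{\dod}^\top\tilde{\ub}\bigr)\|^2.
\]

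The key algebraic observation I would then invoke is that $\ambient$ acts as an isometry from $(\mathbb{R}^{\nambient},|\cdot|)$ onto $(\spann(\ambient),\|\cdot\|)$: for any $\wb\in\mathbb{R}^{\nambient}$ one has $\|\ambient\wb\|^2=\wb^\top\ambient^\top\mass\ambient\wb=|\wb|^2$ by $\mass$-orthonormality. Applying this with $\wb=\tilde{\ub}-\tilde{\dod}\tilde{\dod}^\top\tilde{\ub}$ converts the second term into $|\tilde{\ub}-\tilde{\dod}\tilde{\dod}^\top\tilde{\ub}|^2$, which is precisely the $i$-th summand of $\loss_{\ambient}(\tilde{\dod})$, whereas the first term $\|\ub-\ambient\ambient^\top\mass\ub\|^2$ is the squared projection error of the snapshot onto the ambient space and carries no dependence on $\tilde{\dod}$.

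Finally I would sum over $i=1,\dots,\ntrain$, divide by $\ntrain$, and collect the $\tilde{\dod}$-independent terms into
\[
c_{\ambient}:=\frac{1}{\ntrain}\sum_{i=1}^{\ntrain}\|\ub_{\mub_i,\nub_i}-\ambient\ambient^\top\mass\ub_{\mub_i,\nub_i}\|^2,
\]
which depends only on $\ambient$ and the training data, thereby establishing \eqref{eq:equiv}. There is no genuine obstacle beyond careful bookkeeping; the one point meriting attention is the orthogonality claim underpinning the Pythagorean split, which hinges on $\ambient\ambient^\top\mass$ being the \emph{$\mass$-orthogonal} (not merely Euclidean) projector onto $\spann(\ambient)$ — a fact that itself reduces to $\ambient^\top\mass\ambient=\mathbb{I}$. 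Strictly speaking $c_{\ambient}\ge 0$, with strict positivity as soon as at least one snapshot is not exactly represented in $\spann(\ambient)$.
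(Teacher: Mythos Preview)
Your proof is correct and follows essentially the same route as the paper: split the residual via the $\mass$-orthogonal projector $\ambient\ambient^{\top}\mass$, apply the Pythagorean identity, use $\mass$-orthonormality of $\ambient$ to turn $\|\ambient\wb\|$ into $|\wb|$, and average over the training set. Your exposition is in fact slightly more explicit (identifying $c_{\ambient}$ and noting that strict positivity requires at least one snapshot outside $\spann(\ambient)$), but the argument is the same.
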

\begin{proof}
    Fix any $\mub_{i},\nub_{i},\ub_{\mub_{i}, \nub_{i}}$ in the training set. Since the ambient residual $$\ub_{\mub_{i},\nub_{i}}-\ambient\ambient^{\top}\mass\ub_{\mub_{i},\nub_{i}}$$
    is $\mass$-orthonormal to $\spann(\ambient)$, it follows that
    \begin{multline*}\|\ub_{\mub_{i},\nub_{i}}-\dod_{\mub_{i}}\dod_{\mub_{i}}^{\top}\mass\ub_{\mub_{i},\nub_{i}}\|^{2}=\|\ub_{\mub_{i},\nub_{i}}-\ambient\tilde{\dod}_{\mub_{i}}\tilde{\dod}_{\mub_{i}}^{\top}\ambient^{\top}\mass\ub_{\mub_{i},\nub_{i}}\|^{2}=\\=
    \|\ub_{\mub_{i},\nub_{i}}-\ambient\ambient^{\top}\mass\ub_{\mub_{i},\nub_{i}}\|^{2}+
    \|\ambient\ambient^{\top}\mass\ub_{\mub_{i},\nub_{i}}-\ambient\tilde{\dod}_{\mub_{i}}\tilde{\dod}_{\mub_{i}}^{\top}\ambient^{\top}\mass\ub_{\mub_{i},\nub_{i}}\|^{2}=\\=
    \|\ub_{\mub_{i},\nub_{i}}-\ambient\ambient^{\top}\mass\ub_{\mub_{i},\nub_{i}}\|^{2}+
    |\ambient^{\top}\mass\ub_{\mub_{i},\nub_{i}}-\tilde{\dod}_{\mub_{i}}\tilde{\dod}_{\mub_{i}}^{\top}\ambient^{\top}\mass\ub_{\mub_{i},\nub_{i}}|^{2}.
    \end{multline*}
    Then, averaging over the training set yields \eqref{eq:equiv}.    
\end{proof}

Hence, to summarize, the implementation and training of a DOD network can be carried out as follows. First, following the guidelines presented in Section \ref{sec:design}, we design the model architecture. According to Figure \ref{fig:dod}, this corresponds to fixing an ambient space $\ambient$, a latent dimension $n$, and a set of neural network architectures defining, respectively, the seed and the roots modules. From an abstract point of view, this is equivalent to identifying a suitable hypothesis class
$$\mathscr{D}\subset\{\tilde{\dod}:\pgeo\to\mathbb{R}^{N_{A}\times n}\}.$$
for the inner DOD module. Then, we rely on classical optimization algorithms, such as L-BFGS or Adam, to solve the following minimization problem 
$$\min_{\tilde{\dod}\in\mathscr{D}}\;\loss_{\ambient}(\tilde{\dod}),$$
which, ultimately, corresponds to training the (inner) DOD network.

\begin{algorithm}[t]
\SetAlgoLined
\normalem
\SetKwInOut{Input}{Input}
\SetKwInOut{Output}{Output}
\vspace{0.25em}
\Input{FOM solver $\texttt{FOM}=\texttt{FOM}(\mub,\nub)$, parameter spaces $\pgeo$ and $\pphys,$ inner module architecture class $\mathscr{D}$ of reduced dimension $n$, sample size $\ntrain$, Gram matrix $\mass$, ambient dimension $\nambient$ with $\nambient>n$.
\vspace{0.5em}
}

\Output{Trained DOD model $\dod$.
\vspace{0.75em}
}

 $[\mub_{1},\dots,\mub_{\ntrain}]\leftarrow$ i.i.d. random sample from $\pgeo$\vspace{0.5em}

  $[\nub_{1},\dots,\nub_{\ntrain}]\leftarrow$ i.i.d. random sample from $\pphys$\vspace{0.5em}

 $[\ub_{1},\dots,\ub_{\ntrain}]\leftarrow[\texttt{FOM}(\mub_{i},\nub_{i})\;\text{for}\;i=1:\ntrain]$\vspace{0.5em}\hspace{0.5cm}\textit{// sampling}

 $\ambient\leftarrow \texttt{POD}([\ub_{1},\dots,\ub_{\ntrain}],\;\mass, \;\nambient)$\vspace{1em}\hspace{0.5cm}\textit{// ambient space definition}

$[\tilde{\ub}_{1},\dots,\tilde{\ub}_{\ntrain}]\leftarrow[\ambient^{\top}\mass\ub_{i}\;\text{for}\;i=1:\ntrain]$\vspace{0.75em}\hspace{0.5cm}\textit{// ambient projection}

$\tilde{\dod}_{*}\leftarrow\argmin_{\tilde{\dod}\in\mathscr{D}}\;\frac{1}{\ntrain}\sum_{i=1}^{\ntrain}|\tilde{\ub}_{i}-\tilde{\dod}(\mub_{i})\tilde{\dod}^{\top}(\mub_{i})\tilde{\ub}_{i}|^{2}$\vspace{0.5em}\hspace{0.5cm}\textit{// training}

$\dod\leftarrow \ambient\tilde{\dod}_{*}$\hspace{0.5cm}\textit{// map composition}\vspace{0.5em}

\Return{$\dod$}\vspace{0.25em}

 \caption{\label{algo:train1}Construction and training of the DOD architecture.}
 
\end{algorithm}

\subsection{Quantifying adaptivity}
\label{subsec:adaptivity}
After training, it can be useful to quantify the actual adaptivity of the DOD basis: that is, to which extent the map $\mub\mapsto \dod_{\mub}$ is non-constant over the parameter space $\pgeo.$ As we shall see in the next Section, this postprocessing can substantially increase our understanding of the DOD approach, and help us in designing better models.

In principle, measuring the variability of the DOD basis across $\pgeo$ might seem straightforward: in fact, since the DOD is explicitly given (and in closed form) by a neural network model, we can easily compute quantities such as derivatives, variances, and so on. However, this is not the full story, and things are actually more complicated.
To appreciate this, let us consider a very simple example where $p=1$, $N_{h}=3$ and $\|\cdot\|$ is the Euclidean norm. Consider the DOD model below
\begin{equation}
\label{eq:motivating-grassmann}
    \mu\mapsto \dod_{\mu}:=\left[
\begin{array}{cc}
    \cos\mu & -\sin\mu \\
    \sin\mu & \cos\mu\\
    0 & 0
\end{array}
\right].\end{equation}
At first sight, it may look like the DOD basis is adaptively changing with the input parameter $\mu$. However, this is not really the case. In fact,
$$\spann(\dod_{\mu})=\spann(\dod_{0})$$
for all $\mu\in\mathbb{R}.$ In particular, since the projection error depends only on the underlying subspace (and not on the matrix representation), the maps $\mu\mapsto \mathbf{V}_{\mu}$ and $\mu\mapsto\mathbf{V}_{0}$ are actually equivalent. In this sense, a DOD network acting as \eqref{eq:motivating-grassmann} would not be adaptive at all.
\\\\
These considerations are key, as they bring us to the following observation: the outputs of a DOD network are not matrices, but \textit{subspaces}. As such, the variability of the DOD basis is better understood in terms of the so-called \textit{Grassmann manifold} \review{\cite{amsallem2008interpolation, zimmermann2018geometric, bendokat2024grassmann, wong1967differential}}.
For a given dimension $n$ and a suitable ambient space $\mathcal{A}$, the Grassmann manifold consists of all subspaces $\mathcal{V}\subseteq\mathcal{A}$ of dimension $n$, namely
$$\grassmann_{n}(\mathcal{A}):=\{\mathcal{V}\subseteq\mathcal{A}\;\text{such that}\;\mathcal{V}\;\text{is linear and}\;\dim(\mathcal{V})=n\},$$
defined whenever $1\le n\le \dim(\mathcal{A})$. The Grassmann manifold can be equipped with different metrics; see, e.g. \cite{edelman1998geometry} for a comprehensive list, specifically designed for measuring distances between subspaces. Here, by noting that the variability of $\dod$ ultimately depends on that of its inner module $\tilde{\dod}$, we focus on the case $\mathcal{A}=\mathbb{R}^{\nambient}$ and we endow the Grassmann manifold with the following metric 
$$\distance(\mathcal{V},\mathcal{W}):=\;\max_{\substack{\mathbf{w}\in\mathcal{W}\\|\mathbf{w}|=1}}\;\min_{\mathbf{v}\in\mathcal{V}}\;|\mathbf{w}-\mathbf{v}|,$$
so that $\distance:\grassmann_{n}(\mathbb{R}^{\nambient})\times\grassmann_{n}(\mathbb{R}^{\nambient})\to[0,1]$. Equivalently, if $\tilde{\mathbb{V}}$ and $\tilde{\mathbb{W}}$ are orthonormal matrices representing the two subspaces $\mathcal{V}$ and $\mathcal{W}$, respectively, then $$\distance(\mathcal{V},\mathcal{W})=\sqrt{1-\sigma_{\min}^{2}},$$ where $\sigma_{\min}$ is the smallest singular value of $\tilde{\mathbb{V}}^{\top}\tilde{\mathbb{W}}$, see Algorithm \ref{algo:distance}. In the literature, this metric is also known as \textit{projection 2-norm} \cite{edelman1998geometry}.
\begin{algorithm}[t]
\SetAlgoLined
\normalem
\SetKwInOut{Input}{Input}
\SetKwInOut{Output}{Output}
\vspace{0.25em}
\Input{
Matrices $\mathbb{V},\mathbb{W}\in\mathbb{R}^{\nambient\times n}$.
\vspace{0.25em}
}

\Output{
Metric distance $\distance$ between $\spann(\mathbb{V})$ and $\spann(\mathbb{W})$.
\vspace{0.75em}
}

$\tilde{\mathbb{V}}\;\leftarrow$ ORTH($\mathbb{V}$) $\;\;\;\setminus\setminus$ \textit{Orthonormalization}\vspace{0.5em}

$\tilde{\mathbb{W}}\leftarrow$ ORTH($\mathbb{W}$)\vspace{0.75em}

$[\sigma_{1},\dots,\sigma_{n}]\leftarrow$ singular values of $\tilde{\mathbb{V}}^{\top}\tilde{\mathbb{W}}$ (in decreasing order)\vspace{0.75em}

\Return{$\sqrt{1-\sigma_{n}^{2}}$}\vspace{0.25em}

 \caption{\label{algo:distance}Computation of the distance $\distance$ over the Grassmann manifold $\grassmann_{n}(\mathbb{R}^{\nambient})$.\vspace{0.25em}}
 
\end{algorithm}
\\\\
With this setup, we can think of the inner DOD module as of a map
$$\tilde{\dod}:\pgeo\to\grassmann_{n}(\mathbb{R}^{\nambient}),$$
and exploit the metric $\distance$ to define a suitable adaptivity-score. We do this by relying on a generalization of the statistical variance, specifically designed for random variables 
in metric spaces, see, e.g., \cite{dubey2020functional} or Remark \ref{remark:variance} at the end of this Section,
\begin{equation}
\label{eq:dvariance}
\text{Var}(\dod):=\mathbb{E}_{\mub,\mub'}\left[\frac{1}{2}\distance^{2}(\tilde{\dod}_{\mub},\tilde{\dod}_{\mub'})\right],
\end{equation}
where $\mub,\mub'\sim\mathbb{P}$ are independent and identically distributed (recall that, here, $\mathbb{P}$ is a given probability distribution defined over $\pgeo$, modeling the likelihood of different parametric configurations). Since, by definition, we have $0\le\text{Var}(\dod)\le 1/2,$ we define the \textit{DOD adaptivity-score} as a normalized standard-deviation, namely
\begin{equation}
\label{eq:adapt-score}
\adapt(\dod):=\sqrt{2\text{Var}(\dod)}=\mathbb{E}^{1/2}_{\mub,\mub'}\left[\distance^{2}(\tilde{\dod}_{\mub},\tilde{\dod}_{\mub'})\right],
\end{equation}
so that $0\le \adapt(\dod)\le 1.$ The adaptivity-score has the following interpretation: values close to zero indicate that the DOD is collapsing towards a unique global basis; conversely, larger scores correspond to a larger variability across $\pgeo$.

From a practical point of view, since the DOD can be evaluated with little or no computational cost, we can estimate \eqref{eq:adapt-score} via classical Monte Carlo. Here, we shall rely on the following estimator
\begin{equation}
\label{eq:mc-adapt-score}
\adapt_{\textnormal{MC}}(\dod):=\sqrt{\frac{1}{N_{\text{r}}}\sum_{i=1}^{N_{\text{r}}}\distance^{2}(\tilde{\dod}_{\mub_{2i-1}^{\text{r}}}, \tilde{\dod}_{\mub_{2i}^{\text{r}}})},
\end{equation}
where $\{\mub_{i}^{\text{r}}\}_{i=1}^{2N_{\text{r}}}\subset\Theta$ are a suitable i.i.d. random sample, independent of the training set. In general, letting $N_{\text{r}}\sim10^4$ already yields a reasonable approximation, as demonstrated by the following.

\begin{lemma}
    Fix any $\delta,\epsilon>0$. Let $N_{\text{r}}=\left \lceil{\delta^{-1}\epsilon^{-4}}/4\right \rceil $ and let $\{\mub_{i}^{\text{r}}\}_{i=1}^{2N_{\text{r}}}\subset\Theta$ be an i.i.d. random sample, independent of the training set. Then,
    $$\left|\adapt(\dod)-\adapt_{\textnormal{MC}}(\dod)\right|\le\epsilon$$
    with probability $1-\delta.$
\end{lemma}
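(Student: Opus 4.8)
The plan is to reduce the statement to a concentration estimate for an average of bounded, independent, identically distributed scalar random variables. For $i=1,\dots,N_{\text{r}}$, set
$$X_{i}:=\distance^{2}\!\left(\tilde{\dod}_{\mub^{\text{r}}_{2i-1}},\,\tilde{\dod}_{\mub^{\text{r}}_{2i}}\right).$$
Since the pairs of indices $(2i-1,2i)$ are disjoint and the $\{\mub^{\text{r}}_{j}\}$ are i.i.d. $\sim\mathbb{P}$, the variables $X_{1},\dots,X_{N_{\text{r}}}$ are i.i.d.; moreover, because $\distance$ takes values in $[0,1]$, each $X_{i}\in[0,1]$. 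By the definition \eqref{eq:adapt-score} of the adaptivity-score and of its Monte Carlo estimator \eqref{eq:mc-adapt-score}, we have exactly $\mathbb{E}[X_{1}]=\adapt^{2}(\dod)$ and $\bar{X}:=\tfrac{1}{N_{\text{r}}}\sum_{i=1}^{N_{\text{r}}}X_{i}=\adapt_{\textnormal{MC}}^{2}(\dod)$. Thus the quantity we must control is $|\sqrt{\mathbb{E}[X_{1}]}-\sqrt{\bar{X}}|$.

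The key step is a square-root reduction. Using the elementary inequality $|\sqrt{a}-\sqrt{b}|\le\sqrt{|a-b|}$, valid for all $a,b\ge0$, we obtain
$$\left|\adapt(\dod)-\adapt_{\textnormal{MC}}(\dod)\right|=\left|\sqrt{\mathbb{E}[X_{1}]}-\sqrt{\bar{X}}\right|\le\sqrt{\bigl|\bar{X}-\mathbb{E}[X_{1}]\bigr|}.$$
Consequently, it suffices to show that the empirical mean is within $\epsilon^{2}$ of its expectation with probability at least $1-\delta$; this squaring of the tolerance is precisely what explains the $\epsilon^{-4}$ scaling in the prescribed sample size.

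To close this, I would apply Chebyshev's inequality. Since $X_{1}\in[0,1]$ we have the crude variance bound $\mathrm{Var}(X_{1})=\mathbb{E}[X_{1}^{2}]-\mathbb{E}[X_{1}]^{2}\le\mathbb{E}[X_{1}]-\mathbb{E}[X_{1}]^{2}\le\tfrac14$. Hence
$$\mathbb{P}\!\left(\bigl|\bar{X}-\mathbb{E}[X_{1}]\bigr|\ge\epsilon^{2}\right)\le\frac{\mathrm{Var}(X_{1})}{N_{\text{r}}\,\epsilon^{4}}\le\frac{1}{4N_{\text{r}}\,\epsilon^{4}}.$$
Plugging in $N_{\text{r}}=\lceil\delta^{-1}\epsilon^{-4}/4\rceil\ge\delta^{-1}\epsilon^{-4}/4$ makes the right-hand side at most $\delta$. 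On the complementary event, of probability at least $1-\delta$, we have $|\bar{X}-\mathbb{E}[X_{1}]|<\epsilon^{2}$, and the square-root reduction then gives $|\adapt(\dod)-\adapt_{\textnormal{MC}}(\dod)|\le\epsilon$, as claimed.

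There is no serious obstacle here; the only genuinely substantive choices are the square-root inequality, which transfers the estimate from $\adapt^{2}$ to $\adapt$ at the cost of squaring the accuracy, and the decision to use Chebyshev with the variance bound $\tfrac14$ rather than Hoeffding. The latter is deliberate: Chebyshev yields the clean $\delta^{-1}$ dependence that matches the stated $N_{\text{r}}$, whereas a Hoeffding bound would instead produce a $\log(1/\delta)$ factor and a different constant. The independence of the $X_{i}$, which underlies the $\mathrm{Var}(\bar{X})=\mathrm{Var}(X_{1})/N_{\text{r}}$ identity, hinges on pairing \emph{disjoint} samples, so I would make that pairing explicit at the outset.
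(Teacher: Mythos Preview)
Your proposal is correct and follows essentially the same route as the paper's proof: the same square-root reduction (the paper phrases it as $|a^{2}-b^{2}|\ge|a-b|^{2}$, which is your inequality $|\sqrt{a}-\sqrt{b}|\le\sqrt{|a-b|}$ after relabeling), then Chebyshev together with the variance bound $\mathrm{Var}(X_{1})\le\mathbb{E}[X_{1}](1-\mathbb{E}[X_{1}])\le\tfrac14$ for a $[0,1]$-valued variable. The only difference is cosmetic ordering; the substance and constants match exactly.
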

\begin{proof}
    Let $\textsf{Prob}$ denote the probability law of the entire random sample. 
    We note that, for all $a,b\ge0$, one has $|a^{2}-b^{2}|\ge|a-b|^{2}.$ Therefore,
    \begin{multline*}        
        \textsf{Prob}\left(\left|\adapt(\dod)-\adapt_{\textnormal{MC}}(\dod)\right|>\epsilon\right)\\\le\textsf{Prob}\left(\left|\adapt^{2}(\dod)-\adapt_{\textnormal{MC}}^{2}(\dod)\right|>\epsilon^{2}\right)\\\le\epsilon^{-4}\text{Var}(\adapt_{\textnormal{MC}}^{2}(\dod)),
    \end{multline*}
    by Chebyshev inequality. Let $X:=\distance^{2}(\tilde{\dod}_{\mub^{\text{r}}_{1}},\tilde{\dod}_{\mub^{\text{r}}_{2}})$. By independence and since both $X$ and $\mathbb{E}[X]$ take values in $[0,1]$, we have
    \begin{multline*}
    \text{Var}(\adapt_{\textnormal{MC}}^{2}(\dod))=\frac{1}{N_{\text{r}}}\text{Var}(X)=\frac{1}{N_{\text{r}}}\left(\mathbb{E}[X^{2}]-\mathbb{E}^{2}[X]\right)\le\\\le\frac{1}{N_{\text{r}}}\left(\mathbb{E}[X]-\mathbb{E}^{2}[X]\right)=\frac{1}{N_{\text{r}}}\mathbb{E}[X]\left(1-\mathbb{E}[X]\right)\le\frac{1}{4N_{\text{r}}}.   \end{multline*}
    Consequently, $\textsf{Prob}\left(\left|\adapt(\dod)-\adapt_{\textnormal{MC}}(\dod)\right|>\epsilon\right)\le (4N_{\text{r}})^{-1}\epsilon^{-4}\le\delta.$
\end{proof}

\begin{remark}
    \label{remark:variance}
    Equation \eqref{eq:dvariance} can be seen as a generalization of the variance to metric spaces. To see this, consider the case of a real-valued random variable $X$. Classically, its variance is defined as $\textnormal{Var}(X)=\mathbb{E}[|X-\mathbb{E}[X]|^{2}]=\mathbb{E}[X^{2}]-\mathbb{E}^{2}[X].$ We now note that if $Y$ is another independent random variable and $Y\sim X$ (identical distribution), then by classical properties,
    \begin{multline*}
        \mathbb{E}\left[\frac{1}{2}|X-Y|^{2}\right]=\frac{1}{2}\left(\mathbb{E}[X^{2}]-2\mathbb{E}[XY]+\mathbb{E}[Y^{2}]\right)=\\=\frac{1}{2}\left(\mathbb{E}[X^{2}]-2\mathbb{E}[X]\mathbb{E}[Y]+\mathbb{E}[Y^{2}]\right)=\\=\frac{1}{2}\left(\mathbb{E}[X^{2}]-2\mathbb{E}[X]\mathbb{E}[X]+\mathbb{E}[X^{2}]\right)=\textnormal{Var}(X).
    \end{multline*}
    In particular, if we denote the Euclidean distance by $\distance=|\cdot|$, then the above is $\mathbb{E}\left[\frac{1}{2}\distance(X,Y)^{2}\right]=\textnormal{Var}(X)$, thus motivating our definition in \eqref{eq:dvariance}.
\end{remark}

\begin{remark}
    \label{remark:extreme-values}
    It is worth pointing out that both the case $\adapt(\dod)=0$ and $\adapt(\dod)=1$, are somewhat pathological. In the former case, in fact, the DOD basis is constant, which is equivalent to a classical POD. In the latter case, instead, the DOD turns out to be discontinuous. To see this, note that the continuity of the DOD implies that of the map $g:\mub_{1},\mub_{2}\mapsto \distance^{2}(\dod_{\mub_{1}},\dod_{\mub_{2}})$. Assume now that $\mub$ is an absolutely continuous random variable whose density never vanishes over $\pgeo$. Then $\adapt(\dod)=1$ would imply $g=1$ almost everywhere. If $\dod$ were to be continuous, this would imply $g\equiv1$ over $\pgeo\times\pgeo$; however, this is not possible, since $g(\mub_{1},\mub_{1})=0$ for all $\mub_{1}\in\pgeo$. Thus, $\adapt(\dod)=1$ can only be achieved on a discontinuously adaptive basis.
\end{remark}

\section{Numerical experiments: dimensionality reduction}
\label{sec:exp1}
The purpose of this Section is to provide some preliminary insights on the capabilities of the DOD algorithm as a tool for dimensionality reduction. To this end, we shall present a couple of numerical experiments in which we compare the DOD with other well-established approaches, specifically: POD, clustered POD and autoencoders. 

Similarly to DOD, all these techniques are data-driven, meaning that they require the preliminary collection of some FOM snapshots, $\{\mub_{i},\nub_{i},\ub_{\mub_{i},\nub_{i}}\}_{i=1}^{N}$, randomly sampled, which serve as training data. 
All these approaches define a latent space, where solutions are projected (linearly or nonlinearly) and from which they can be later recovered. To evaluate the quality of the reconstruction, we rely on the mean relative projection error (MRPE)
$$\textnormal{MRPE}:=\frac{1}{N_{\textnormal{test}}}\sum_{i=1}^{N_{\textnormal{test}}}\frac{\|\ub_{\check{\mub}_{i},\check{\nub}_{i}}-\ub_{\check{\mub}_{i},\check{\nub}_{i}}^{\textnormal{proj}}\|}{\|\ub_{\check{\mub_{i}},\check{\nub_{i}}}\|},$$
where $\|\cdot\|$ is the norm induced by $\mass$ over $\mathbb{R}^{N_{h}}$, corresponding to the $L^{2}$-norm in $V_{h}$, while $\{\check{\mub}_{i},\check{\nub}_{i},\ub_{\check{\mub}_{i},\check{\nub}_{i}}\}_{i=1}^{N_{\textnormal{test}}}$ is the so-called test set, a collection of high quality data generated independently of the training set. Here, depending on the dimensionality reduction technique, $\ub_{\mub_{i},\nub_{i}}^{\text{proj}}$ denotes the reconstruction of $\ub_{\mub_{i},\nub_{i}}$. In the DOD case, this is simply given by
$$\ub_{\mub,\nub}^{\textnormal{proj}}:=\dod_{\mub}\dod_{\mub}^{\top}\mass\ub_{\mub,\nub}.$$
In the other cases, instead, the formulas are slightly different. We report them below.

\begin{itemize}
    \item \textbf{POD} \cite{quarteroni2016reduced}. In this case, 
    $$\ufomp^{\text{proj}}:=\mathbb{V}\mathbb{V}^{\top}\mass\ufomp$$
    where, for a given reduced dimension $n$, $\mathbb{V}\in\mathbb{R}^{\fomdim\times n}$ represents a global basis computed by generalized SVD, as in Algorithm \ref{algo:ambient};
    
    \item \textbf{Clustered POD} \cite{pagani2018numerical, geelen2022localized}. Here, the reconstruction reads
    $$\begin{cases}
        \ufomp^{\text{proj}}:=\mathbb{V}_{j}\mathbb{V}_{j}^{\top}\mass\ufomp,\\
        j=\argmin_{k=1,\dots, c}\;\|\ufomp-\mathbb{V}_{k}\mathbb{V}_{k}^{\top}\mass\ufomp\|,
    \end{cases}$$
    where $c$ is the number of clusters, $\mathbb{V}_{1},\dots\mathbb{V}_{c}\in\mathbb{R}^{\fomdim\times n}$ is a collection of basis and $n$ is the reduced dimension. In practice, given $c$ and $n$, the FOM data are first subdivided into $c$ clusters by grouping together similar solutions (here, we rely on the $k$-means algorithm); then, a POD basis is computed for each cluster, yielding the matrices $\mathbb{V}_{1},\dots\mathbb{V}_{c}$. Then, each solution is projected and reconstructed using its own POD basis, defined as the "best" among the ones available. It can be regarded as a primitive form of DOD, where the basis is piecewise constant over the parameter space and changes discontinuously. Typically, methods known as "dictionary-based ROMs" tend to rely on this approach for their construction \cite{daniel2020model, herkert2024dictionary};

    \item \textbf{Autoencoders} \cite{franco2023deep, fresca2021comprehensive, romor2023non}. In this case, the formula is just
    $$
    \ufomp^{\text{proj}}:=\Psi(\Psi'(\ufomp)),
        $$
    where $\Psi':\mathbb{R}^{\fomdim}\to\mathbb{R}^{n}$ and $\Psi:\mathbb{R}^{n}\to\mathbb{R}^{\fomdim}$ are the encoder and decoder networks, respectively. To foster interpretability and provide a meaningful comparison, we shall construct these models following the same ideas adopted for the design of DOD architectures: 
    in particular, we shall rely on POD enhanced autoencoders \cite{brivio2023error, fresca2021pod}, thus leveraging over the existence of the ambient space $\ambient$. In other words, we let $$\Psi'(\ub)=\psi'(\ambient^{\top}\ub)\;\;\textnormal{and}\;\;\Psi(\mathbf{c})=\ambient\psi(\mathbf{c}),$$ where $\psi':\mathbb{R}^{\nambient}\to\mathbb{R}^{n}$ and $\psi:\mathbb{R}^{n}\to\mathbb{R}^{\nambient}$ are the trainable parts of the two architectures, respectively.
\end{itemize}

We conduct the analysis as follows. First, we compare DOD, POD, and AE for varying $n$, so as to better understand how the reduced dimension impacts the projection error. Then, to see whether the clustered POD algorithm can replicate the results achieved by the DOD projector, we compare the two for a fixed dimension $n$ and a varying number of POD clusters $c$.
\\\\
All the code was implemented in Python 3 using the \textit{dlroms} library, a comprehensive Python package that exploits FEniCS and Pytorch to construct deep learning-based ROMs. The \textit{dlroms} package can be accessed for free on Github: \url{https://github.com/NicolaRFranco/dlroms}. 


\begin{figure}
    a)\hspace{6.5cm}b)
    \begin{center}
    \includegraphics[width=0.42\textwidth]{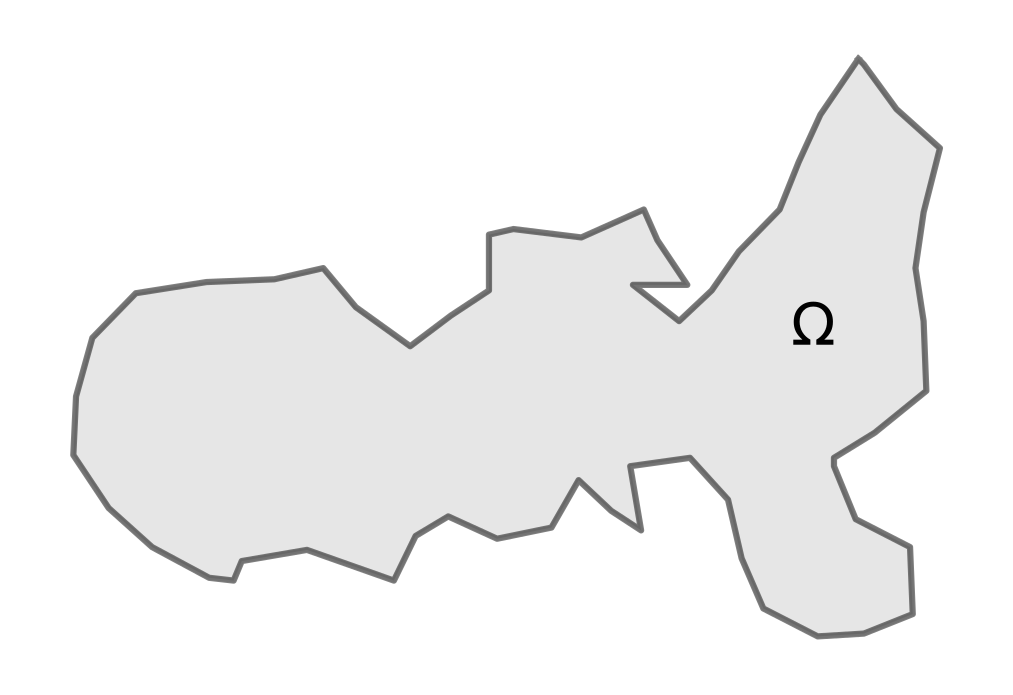}\hfill
    \includegraphics[width=0.5\textwidth]{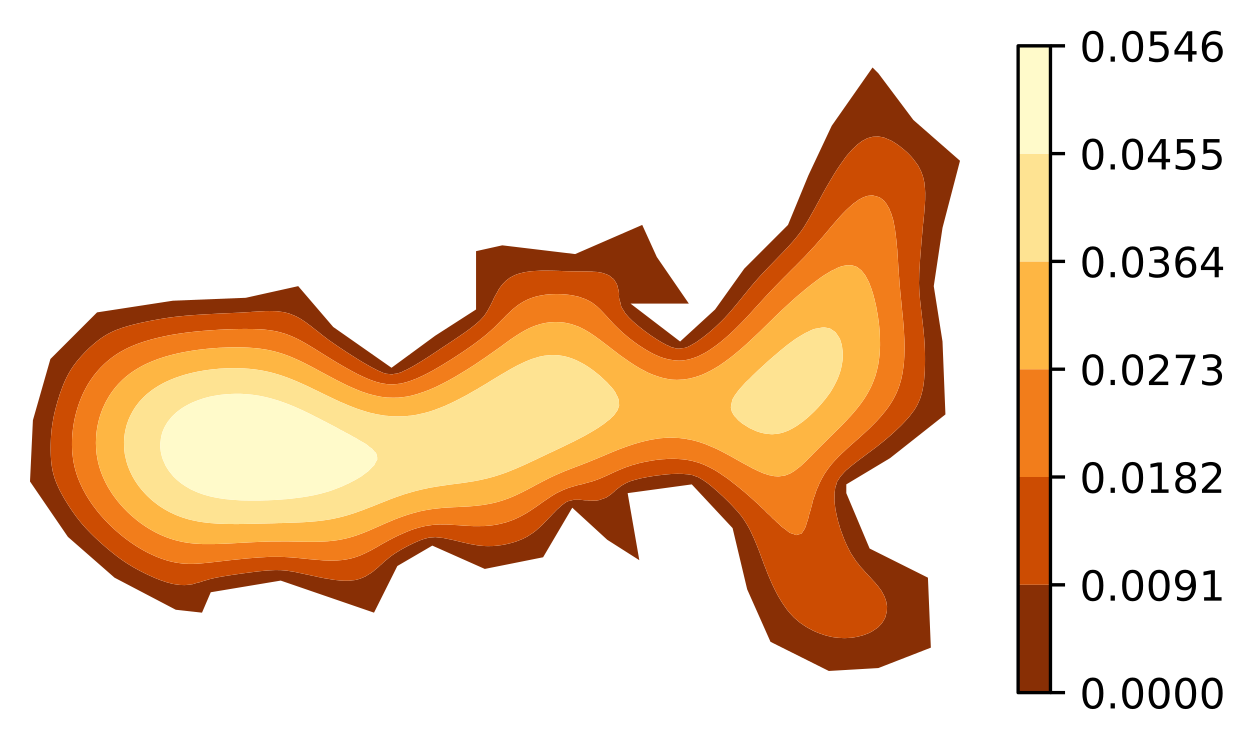}
    \caption{a) spatial domain for the Eikonal equation example, Section \ref{subsec:eikonal}; b) distance to boundary map.}
    \label{fig:domain-elba}
    \end{center}
\end{figure}

\subsection{Eikonal equation}
\label{subsec:eikonal}
To showcase the ability of the DOD to handle arbitrary scenarios, we start by considering a nonlinear PDE defined over a complicated domain. In particular, let
$\Omega$ be the spatial domain in Figure \ref{fig:domain-elba}, representing a simplified cartography of the Elba island, in Italy. We consider a parameter dependent Eikonal equation,
\begin{equation}
\label{eq:eikonal}
|\nabla u|= s_{\nub}^{-1},
\end{equation}
complemented with an internal Dirichlet condition, $u(\xb_{\mub})=0.$ In literature, this equation constitutes a prototypical example of wave propagation: for instance, in seismology and geophysics, it is commonly employed for modeling travelling times of seismic waves through the Earth's subsurface \cite{lin2009eikonal, ma2014calculating}.

For our analysis, we consider a situation in which  \eqref{eq:eikonal} 
depends on four scalar parameters: $\nub=[\nu_{1},\nu_{2}]$ and $\mub=[\mu_{1},\mu_{2}]$. The former, which take values in $\pphys:=[0.1, 30]\times[0.001, 0.01],$ are used to parametrize the speed of travel $s_{\nub}:\Omega\to(0,+\infty)$ as
$$s_{\nub}:= \nu_{1}\left(\|d\|_{L^{\infty}(\Omega)}-d\right)+\nu_{2},$$
where $d:\Omega\to[0,+\infty)$ is the distance to boundary map, hereby computed by solving a preliminary Eikonal equation of the form: $|\nabla d|=1$ in $\Omega$, $d\equiv0$ on $\partial\Omega$. We refer to Figure \ref{fig:domain-elba} (right panel), for a visual depiction of $d$. 

The other set of parameters $\mub$, instead, is used to parameterize the location of a given source $\xb_{\mub}$. To this end, we simply let $\mu_{1}$ and $\mu_{2}$ be the coordinates of $\xb_{\mub},$ so that, ultimately, $\xb_{\mub}=\mub$ and $\Theta:=\Omega$.
From a physical point of view, given $\nub\in\pphys$ and $\mub\in\pgeo$, the solution $u=u_{\mub,\nub}(\xb)$ represents the travel time from $\xb_{\mub}$ to $\xb$, under the speed limit $s_{\nub}.$ Intuitively, since $\mub$ enters the PDE in a singular way, directly interacting with the space variable $\mathbf{x}$, we expect a $\mub$-adaptive approach, such as the DOD, to perform better than classical techniques adopting a global perspective.

As ground truth reference, we consider a stabilized FOM (artificial diffusion $\epsilon=0.1$) based off a Finite Element discretization of \eqref{eq:eikonal} via continuous P1 elements defined over a triangular mesh of stepsize $h\approx 0.0084$, resulting in a total of $N_{h}=9550$ degrees of freedom (dof). We exploit the FOM to sample 1000 random solutions, 500 for training, and 500 for testing. 

To construct the DOD projector, we rely on an ambient space of dimension $N_{A}=300$, as empirically enough to capture most of the variability in the solution manifold (average ambient error = 0.31\%). The remaining parts of the architecture are as in Table \ref{tab:elba - dod architecture}. To ensure a proper comparison, the autoencoders are constructed similarly (see \ref{sec:appendix:architectures} for further details).
The results are shown in Figs. \ref{fig:eikonal-decay}-\ref{fig:elba-basis}, Fig. \ref{fig:adaptivity}, and Table \ref{tab:dod performances}.

\begin{table}
    \centering
    \begin{tabular}{lll}
    \hline\hline
        \textbf{Component} &  \textbf{Specifics} & \textbf{Terminal activation}\\\hline
        Seed & $p\textcolor{white}{p}\mapsto500\mapsto 50$ & 0.1-leakyReLU\\
        Root & $50\mapsto100\mapsto N_{A}$& - \\
        Orth & reduced QR & -\\\hline\hline
    \end{tabular}
    \caption{General DOD architecture for the Eikonal Equation case study, Section \ref{subsec:eikonal}. All architectures employ the 0.1-leakyReLU activation at the \textit{internal} layers. The notation $a\mapsto b$ denotes a dense layer from $\mathbb{R}^{a}$ to $\mathbb{R}^{b}$; longer sequences indicate a composition of multiple layers. The number of root modules depends on the DOD dimension, $n$.}
    \label{tab:elba - dod architecture}
\end{table}

\;\\As we can appreciate from Figure \ref{fig:eikonal-decay} (left panel), for any fixed latent dimension $n$, the DOD approach emerges by far as the best dimensionality reduction technique, reporting errors that are 5 to 10 times smaller than those achieved by POD and autoencoders. In turn, this results in a significant gain in terms of compression rate: note, for instance, that 4 DOD modes can provide the same information as 30 POD modes.

As illustrated in Figure \ref{fig:elba-basis}, the key factor that enables all of this is the adaptability of the DOD basis. There, we see how the DOD modes change according to the position of the geometrical parameter $\mathbf{x}_{0}\in\Omega$. Here, for the sake of readability, we report the results obtained for $n=3$. It is interesting to see how, although different, the basis functions appear to follow a specific criterion. In fact, the first mode is mostly active on the east side of the domain, while the second and the third one capture the west coast and the center region, respectively. In this sense, it appears that the DOD can automatically learn some form of domain decomposition. It should be noted that these considerations are only possible because of the high interpretability that is intrinsic to the DOD approach, something that, in contrast, autoencoders can hardly achieve.
\\\\
Of note, the adaptivity of the DOD basis is not easy to replicate if one simply relies on multiple local basis, as, e.g., in the clustered POD algorithm. This becomes apparent when examining Figure \ref{fig:eikonal-decay} (right panel), where it is evident that a substantial number of POD clusters are needed to replicate the performance of the DOD. For example, extrapolating from the overall trend indicates that approximately $c\approx1900$ clusters would be required to match the accuracy of the DOD basis, a conclusion that is clearly non-sensical.

At the same time, higher variability does not necessarily imply a better accuracy. For example, when going from $n=3$ to $n=4$, we see that the DOD basis becomes less volatile but more accurate; see Figures \ref{fig:eikonal-decay} and \ref{fig:adaptivity}. As we shall discuss in Section \ref{sec:dodrom}, this fact should be taken into account when constructing a DOD based reduced order model: in fact, since the DOD coefficients are computed as $\mathbf{c}_{\mub,\nub}=\dod_{\mub}^{\top}\mass\ufomp$, a larger variability in the DOD basis can produce a higher volatility in the DOD coefficients, making them harder to learn.

Last but not least, we devote a final comment to the error trends in Figure \ref{fig:eikonal-decay}, which at first may seem counterintuitive. In fact, one might expect the DOD error to decay faster than the POD (resp. AE) projection error, but Figure \ref{fig:eikonal-decay} seems to suggest otherwise.
However, this phenomenon is easily explained. If we look closer, we see that from $n=2$ to $n=4$, the error decay is much faster, thus resembling the expected behavior. However, it is also clear that this trend cannot continue for larger $n$. In fact, the accuracy of the DOD projector is bounded by that of the ambient space, in the sense that, for every $n$, one has
$$\mathbb{E}\left[\frac{\|\ufomp-\dod_{\mub}\dod_{\mub}^{\top}\mass\ufomp\|}{\|\ufomp\|}\right]\ge\mathbb{E}\left[\frac{\|\ufomp-\ambient\ambient^{\top}\mass\ufomp\|}{\|\ufomp\|}\right]=0.31\%.$$
Since the projection error is already 0.97\% for $n=4$, cf. Table \ref{tab:dod performances}, it is evident that the error decay will eventually flatten out, leading to the behavior in Figure \ref{fig:eikonal-decay}.

\begin{figure}
    \begin{center}
    \includegraphics[width=0.495\textwidth]{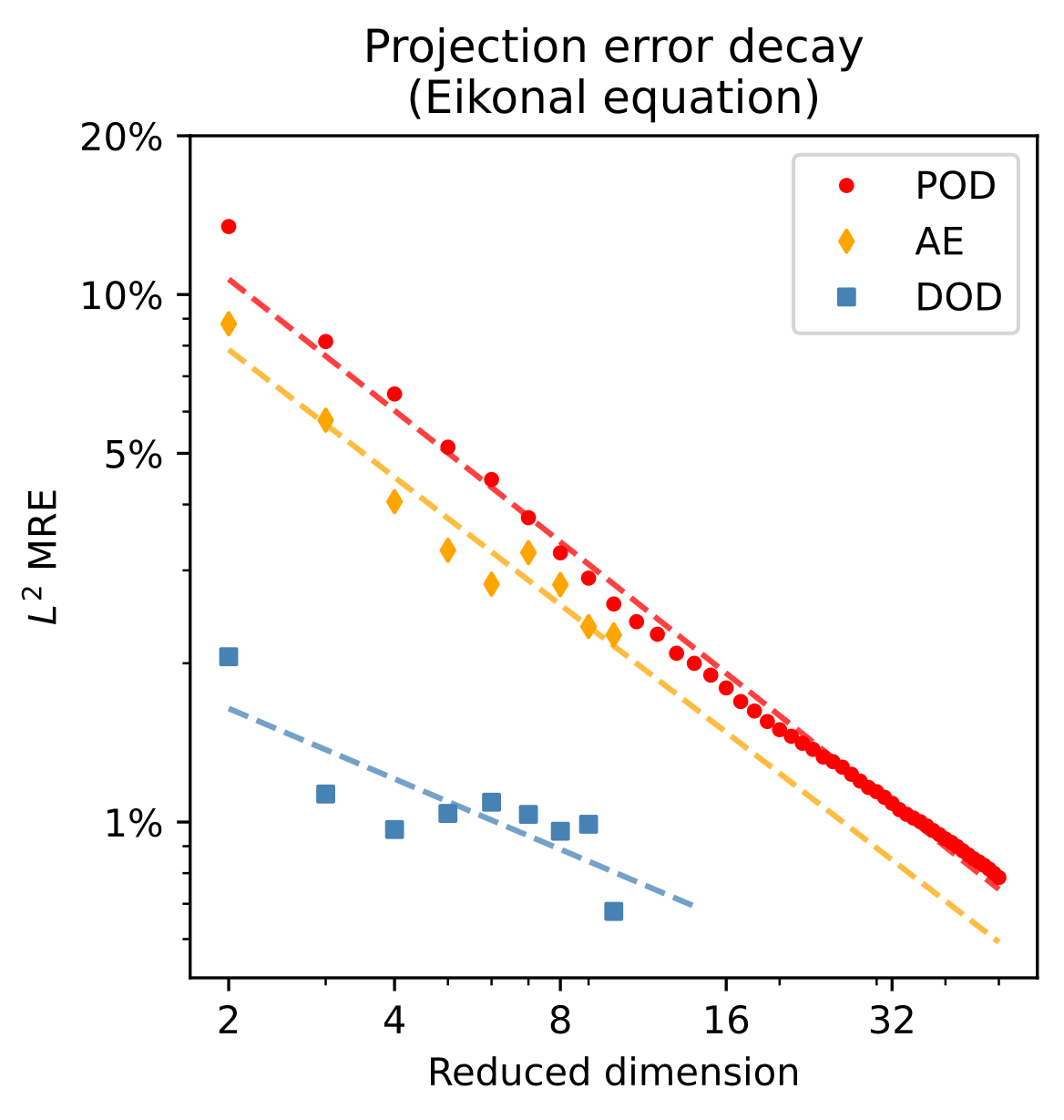}\hfill
    \includegraphics[width=0.495\textwidth]{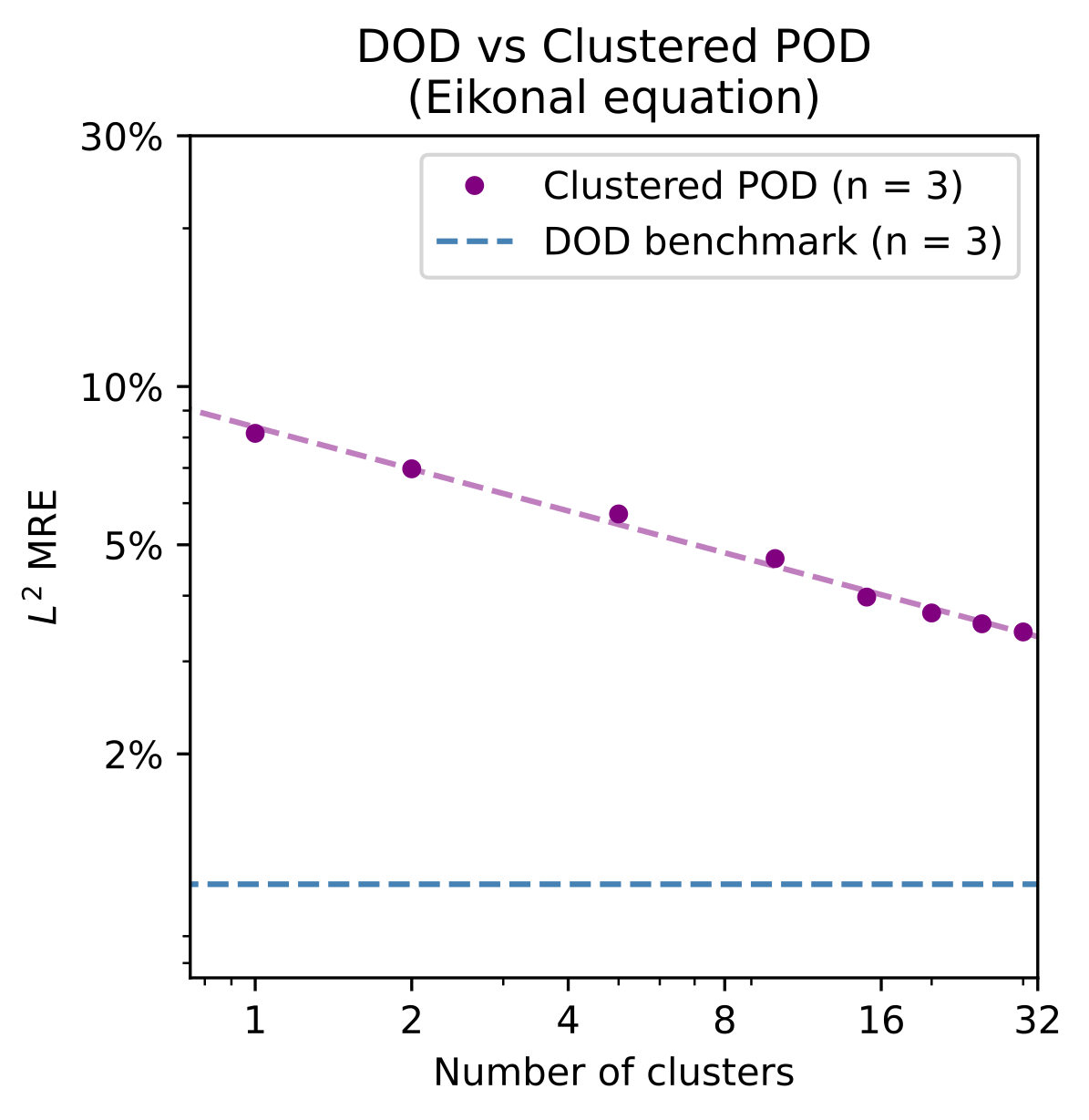}
    \caption{Comparison between DOD and other dimensionality reduction strategies for the Eikonal equation example, Section \ref{subsec:eikonal}.}
    \label{fig:eikonal-decay}
    \end{center}
\end{figure}

\begin{figure}
    \centering
    DOD mode \#1\hspace{2.6cm}
    DOD mode \#2\hspace{2.6cm}
    DOD mode \#3\;\;\;\;\;\;\;\;
    \includegraphics[width=\textwidth]{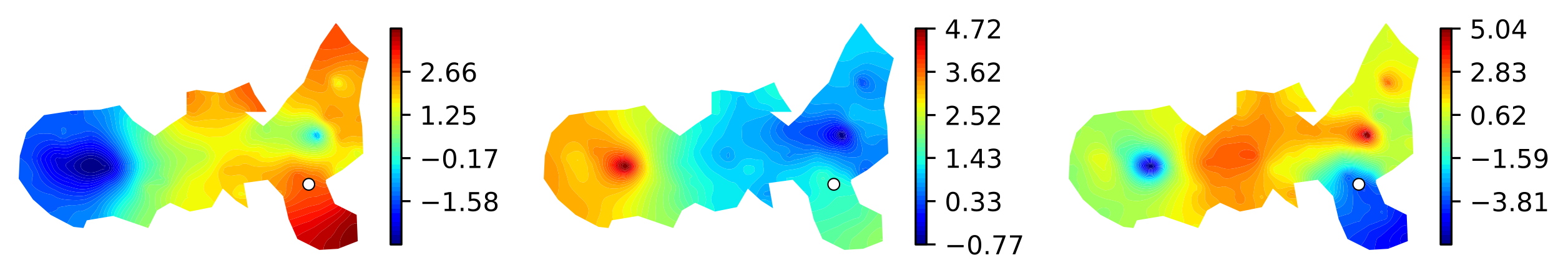}
    \includegraphics[width=\textwidth]{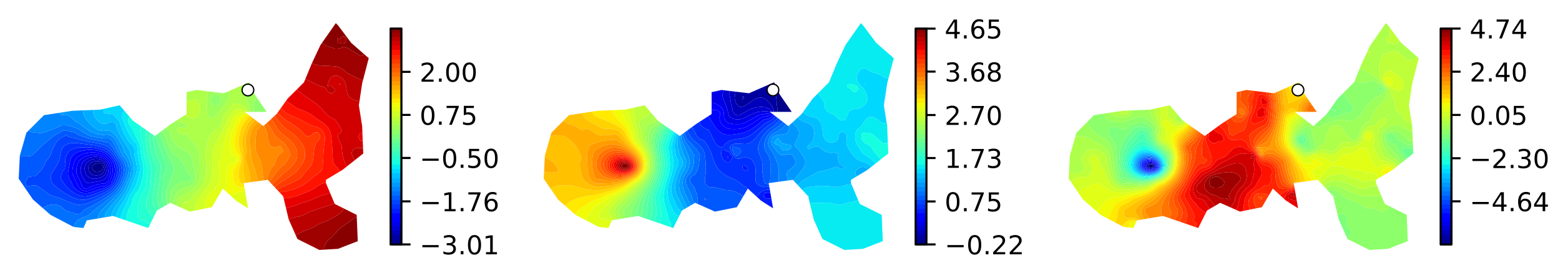}
    \includegraphics[width=\textwidth]{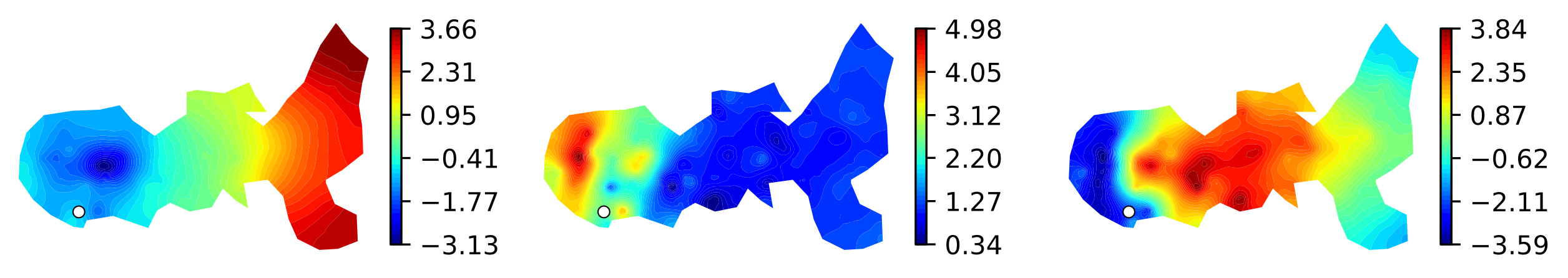}
    \caption{DOD basis for varying positions of the geometrical parameter $\mub=\xb_{0}$ (white dot) in the Eikonal equation example, Section \ref{subsec:eikonal}. Each row refers to a different value of $\mub$, while each column represents a DOD mode (here, $n=3$).}
    \label{fig:elba-basis}
\end{figure}

\subsection{Stationary Navier-Stokes flow around a parametrized obstacle}
\label{subsec:navier-stokes}
For our second example, we return to our model problem, first discussed in Section \ref{subsec:example}, concerning a steady fluid flow around an obstacle. For better readability, we take the opportunity to restate the problem, specifying the governing equations and their parameterization. 

We consider a 2D fluid flow modeled by the following parametrized Navier-Stokes equations, 

\begin{equation}
    \label{eq:navier-stokes}
    \begin{cases}
    -\epsilon\Delta \ubc + \ubc\cdot\nabla\ubc + \nabla q = 0 & \text{in}\;\Omega_{\mub},\\
    \nabla\cdot\ubc = 0 & \text{in}\;\Omega_{\mub},\\
    \ubc = \mathbf{g}_{\nub} & \text{on}\;\Gamma_{\text{in}},\\
    \ubc = 0 & \text{on}\;\partial\Omega_{\mub}\setminus\left(\Gamma_{\text{in}}\cup\Gamma_{\text{out}}\right),\\
    q = 0 & \text{on}\;\Gamma_{\text{out}}
\end{cases}
\end{equation}
where $\Omega_{\mub}:=(0,1)^{2}\setminus O_{\mub}$ is a parameter dependent domain, obtained by removing an almond-shaped object, $O_{\mub}$, from the unit square, see Fig. \ref{fig:navier-stokes-domain}. We focus our attention on the parameters-to-velocity map,
$$(\mub,\nub)\mapsto \ubc.$$
Here, $\mub=[\theta,x_{0},y_{0}]$ is a vector parametrizing the center of the obstacle, $(x_{0},y_{0})$, and its angle of rotation, $\theta$. To ensure that the obstacle $O_{\mub}$ always lies within the unit square, we let $\mub\in\pgeo:=[0,2\pi]\times[0.25, 0.75]^{2}$.

The other parameters, $\nub=[\alpha,\beta]\in\pphys:=[0, 10]^{2}$, instead, parametrize the inflow condition as
$$\mathbf{g}_{\nub}(x,y)=y(1-y)\left(\alpha e^{-100(y-0.25)^{2}}+\beta e^{-100(y-0.75)^{2}}\right)^{1/2}.$$
Simply put, the inflow $\mathbf{g}_{\nub}$ consists of two main contributions: one coming from the bottom (centered at $y=0.25)$, whose strength is determined by $\alpha$, and one coming from the top ($y=0.75)$, whose intensity depends on $\beta$. For simplicity, the viscosity coefficient is fixed instead to $\epsilon:=5\cdot10^{-3}.$

In general, since the parameters directly affect the geometry of the problem, constructing a ROM for \eqref{eq:navier-stokes} can be highly non-trivial, as most approaches require FOM solutions to belong to the same discrete functional space (see also Remark \ref{remark:geometry} at the end of this Section). To account for this, we shall consider a FOM based on a fictitious domain approach, where \eqref{eq:navier-stokes} is embedded in the enlarged domain $\Omega_{\text{e}}:=(0,1)^{2}\supset\Omega_{\mub}$, and additional Dirichlet conditions are imposed to adjust for the location of the obstacle. We discretize the fictitious domain $\Omega_{\text{e}}$ with a structured triangular grid $50\times50$ and approximate the solution to \eqref{eq:navier-stokes} using a standard approach based on Picard iterations (tollerance =1e-10). In order to represent the discretized pressure and velocity fields, we use a stable pairing based on mini-elements (continuous P1 elements for $q$, and P1-Bubble vector elements $\ubc$). 
Then, the FOM solver defines a map of the form
$$(\mub,\nub)\mapsto\mathbf{u}_{\mub,\nub}\in\mathbb{R}^{N_{h}}$$
where $N_{h}=15202$ are the dof in the P1-Bubble space of vector fields, $V_{h}\subset H^{1}(\Omega) \times H^{1}(\Omega).$ 
We exploit the FOM to sample 1500 random solutions, 1000 for training and 500 for testing.
As before, we construct the DOD projector using an ambient space of dimension $N_{A}=300$ (average ambient error = 0.89\%), and a collection of dense architectures for the seed and roots modules, see Table \ref{tab:nstokes - dod architecture}. The results are shown in Figs. \ref{fig:navier-stokes-decay}-\ref{fig:nstokes-basis}, Fig. \ref{fig:adaptivity} and Table \ref{tab:dod performances}.

\begin{table}
    \centering
    \begin{tabular}{lll}
    \hline\hline
        \textbf{Component} &  \textbf{Specifics} & \textbf{Terminal activation}\\\hline
        Seed & $p\textcolor{white}{p}\stackrel{*}{\mapsto}4\textcolor{white}{4}\mapsto 50$ & 0.1-leakyReLU\\
        Root & $50\mapsto50\mapsto N_{A}$& - \\
        Orth & reduced QR & -\\\hline\hline
    \end{tabular}
    \caption{General DOD architecture for the Navier-Stokes case study, Section \ref{subsec:navier-stokes}. The table entries read as in Table \ref{tab:elba - dod architecture}. All architectures employ the 0.1-leakyReLU activation at the \textit{internal} layers. Here, $\stackrel{*}{\mapsto}$ denotes a non-learnable feature layer that acts as $[\theta, x_{0},y_{0}]\stackrel{*}{\mapsto}[\cos4\theta,\sin4\theta,x_{0},y_{0}]$, which we use to enforce rotational symmetry.}
    \label{tab:nstokes - dod architecture}
\end{table}

\;\\
In general, the results are very similar to those obtained in the previous case study, Section \ref{subsec:eikonal}, at least from a qualitative point of view. For fixed $n$, the performance achieved by the DOD algorithm is unmatched, with POD and autoencoders being twice as bad in compressing information. However, this gap tends to decrease for larger $n$, as the projection error of the DOD seems to decay slower: as in the previous case study, in the long run, this might be due to the underlying ambient space $\spann(\ambient)$. 

As the reduced dimension increases, we also observe an increased volatility of the DOD basis, with adaptivity scores ranging from 0.5 to 0.9, cf. Figure \ref{fig:adaptivity}. Adaptivity is also evident in Figure \ref{fig:nstokes-basis}, where we can clearly appreciate how the DOD network is capable of crafting a specific modal basis for each obstacle configuration. Interestingly, we also note that each DOD mode focuses on different features of the problem. For instance, 
the third mode distinctly depicts the interaction between the flow and the obstacle, whereas the other two seem to model the interplay between the incoming jet-flows. 

This shows that, even for very small latent dimensions, $n=3$, the DOD approach can provide very rich representations. On the contrary, dictionary-based approaches, such as clustered POD, fail in replicating such complexity, unless the number of clusters becomes extremely large (Fig. \ref{fig:navier-stokes-decay}, right panel). In this sense, it appears that the main strength of the DOD approach is that of relying on a \textit{continuously adaptive} local basis.

\begin{figure}[h!]
    \centering
    DOD mode \#1\hspace{2.5cm}
    DOD mode \#2\hspace{2.5cm}
    DOD mode \#3\;\;\;\;\;\;\;\;
    \includegraphics[width=\textwidth]{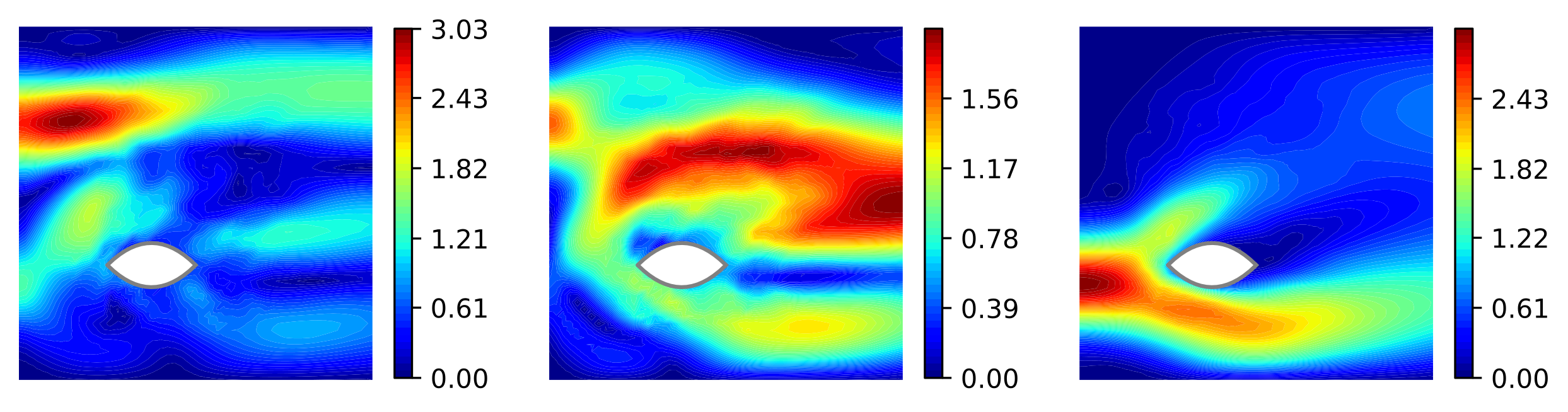}
    \includegraphics[width=\textwidth]{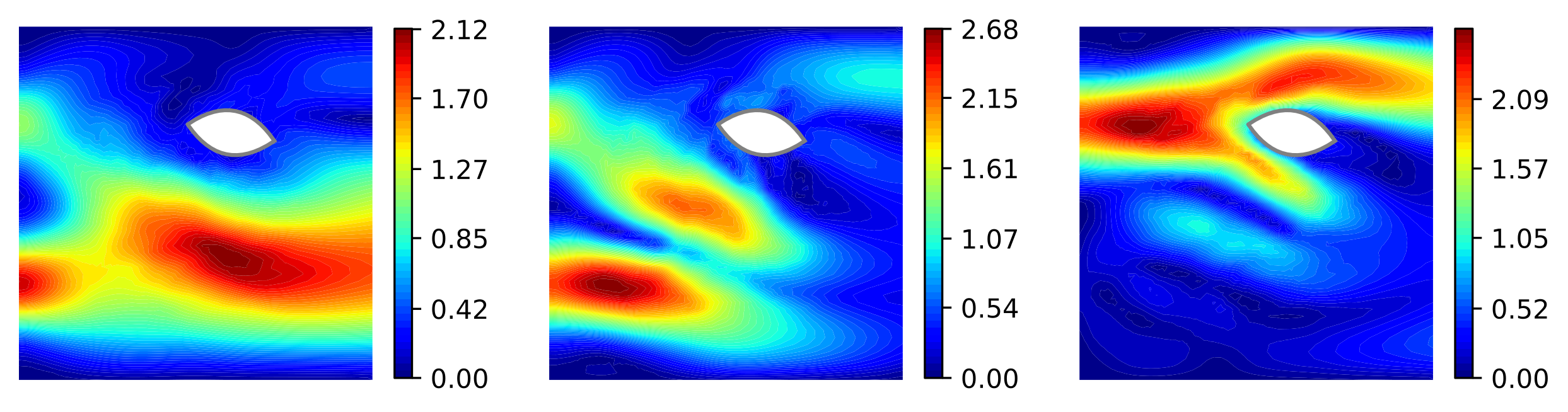}
    \includegraphics[width=\textwidth]{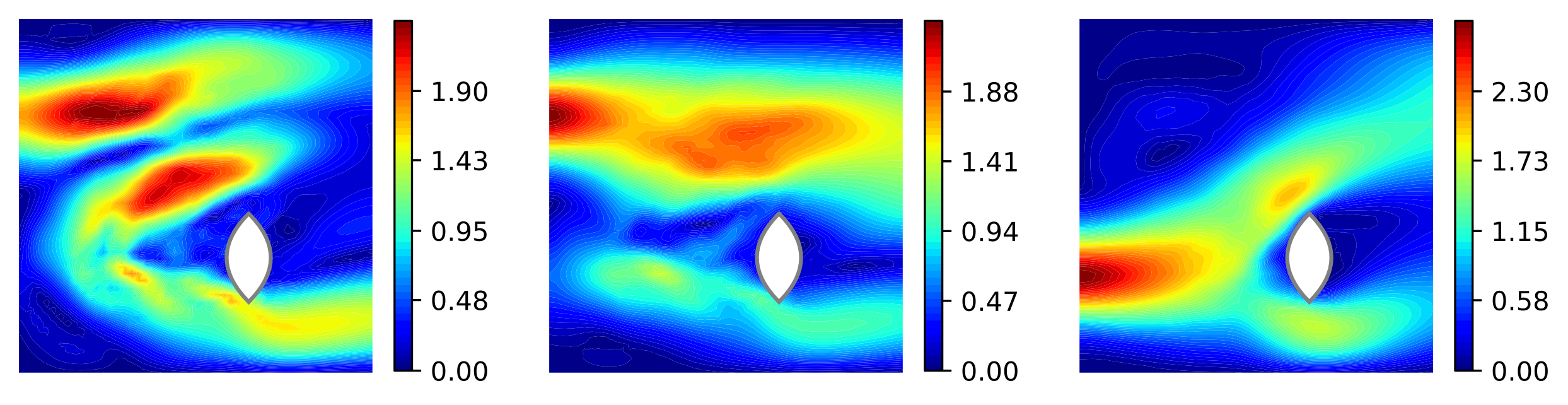}
    \caption{DOD basis for different positions (and rotations) of the obstacle in the Navier-Stokes example, Section \ref{subsec:navier-stokes}. Each row refers to a different value of $\mub=[\theta,x_{0},y_{0}]$, while each column represents a DOD mode (here, $n=3$). NB: for this case study, each DOD mode is actually a vector field $\mathbf{v}_{\mub}^{j}\Omega_{e}\to\mathbb{R}^{2}$. However, to enhance readability, we are only plotting their magnitudes, $|\mathbf{v}_{\mub}^{j}|$.}
    \label{fig:nstokes-basis}
\end{figure}

\begin{remark}
    \label{remark:geometry}
    Developing ROMs to tackle problems in varying geometries is a very challenging task. Possible strategies to address this task typically consist of: (i) relying on a fictitious domain approach, or on a suitable postprocessing routine that interpolates all PDE solutions over a common mesh \cite{bourguet2011reduced, liberge2010reduced}, (ii) exploiting mesh deformation strategies \cite{antil2014application, yin2024dimon}, 
    and/or registration methods \cite{taddei2020registration},
    (iii) leveraging on local operations, as in graph neural networks (GNNs) \cite{barwey2023multiscale, franco2023gnn, gladstone2023gnn}. Here, we consider the simplest of these approaches (that is, the first one), in order to maintain our focus on our primary objective, i.e., developing an adaptive local basis capable of overcoming the Kolmogorov barrier. Clearly, integrating DOD with, e.g., GNNs, would be an interesting research direction, potentially leading to very powerful and flexible ROMs. However, given that the DOD approach is still at its infancy, we leave these considerations for future work.
\end{remark}

\begin{figure}[ht!]
    \begin{center}
    \includegraphics[width=0.495\textwidth]{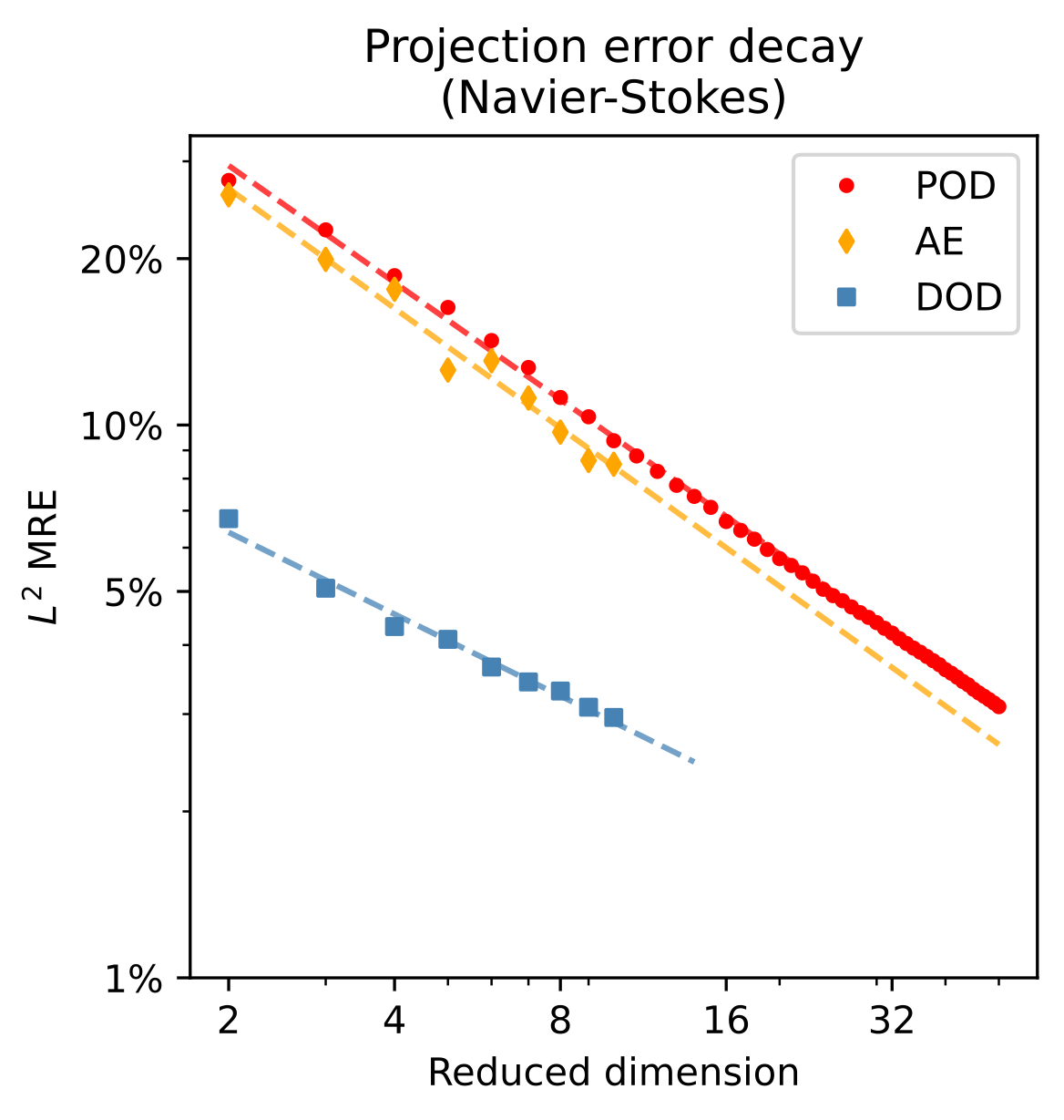}\hfill
    \includegraphics[width=0.495\textwidth]{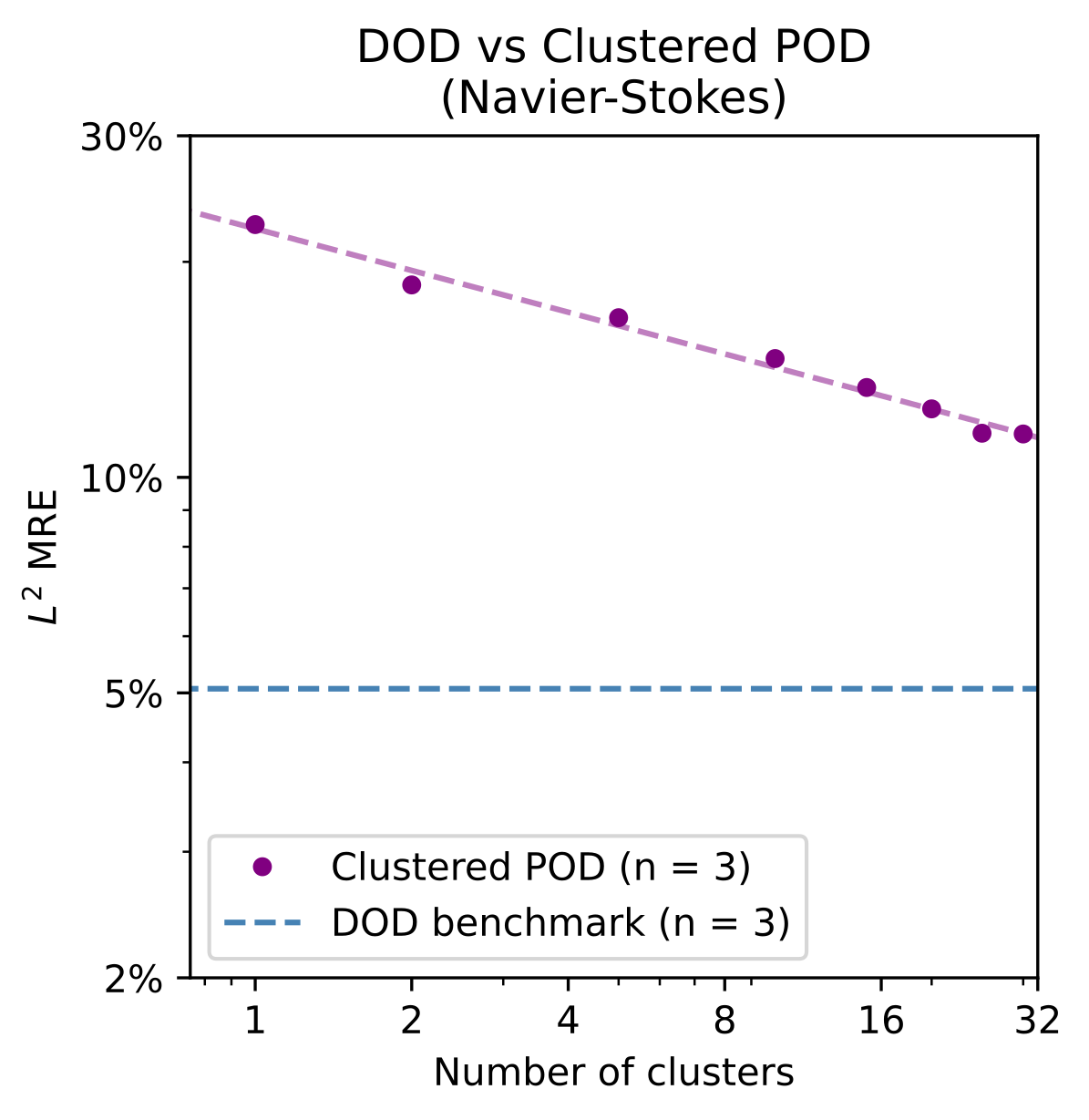}
    \caption{Comparison between DOD and other dimensionality reduction strategies for the Navier-Stokes example, Section \ref{subsec:navier-stokes}.}
    \label{fig:navier-stokes-decay}
    \end{center}
\end{figure}

\begin{figure}[ht!]
    \centering
    \includegraphics[width = 0.4\textwidth]{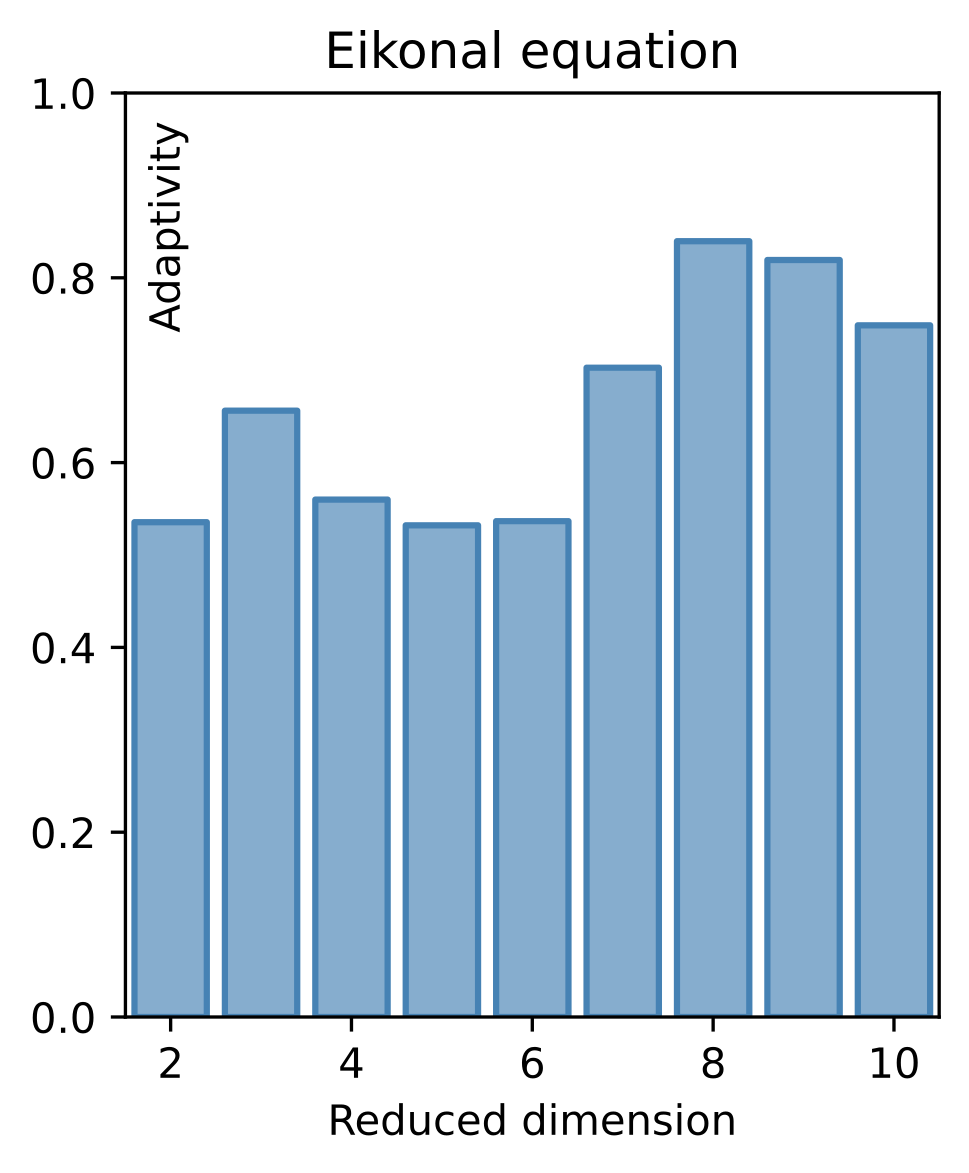}
    \hspace{1.5cm}
    \includegraphics[width = 0.4\textwidth]{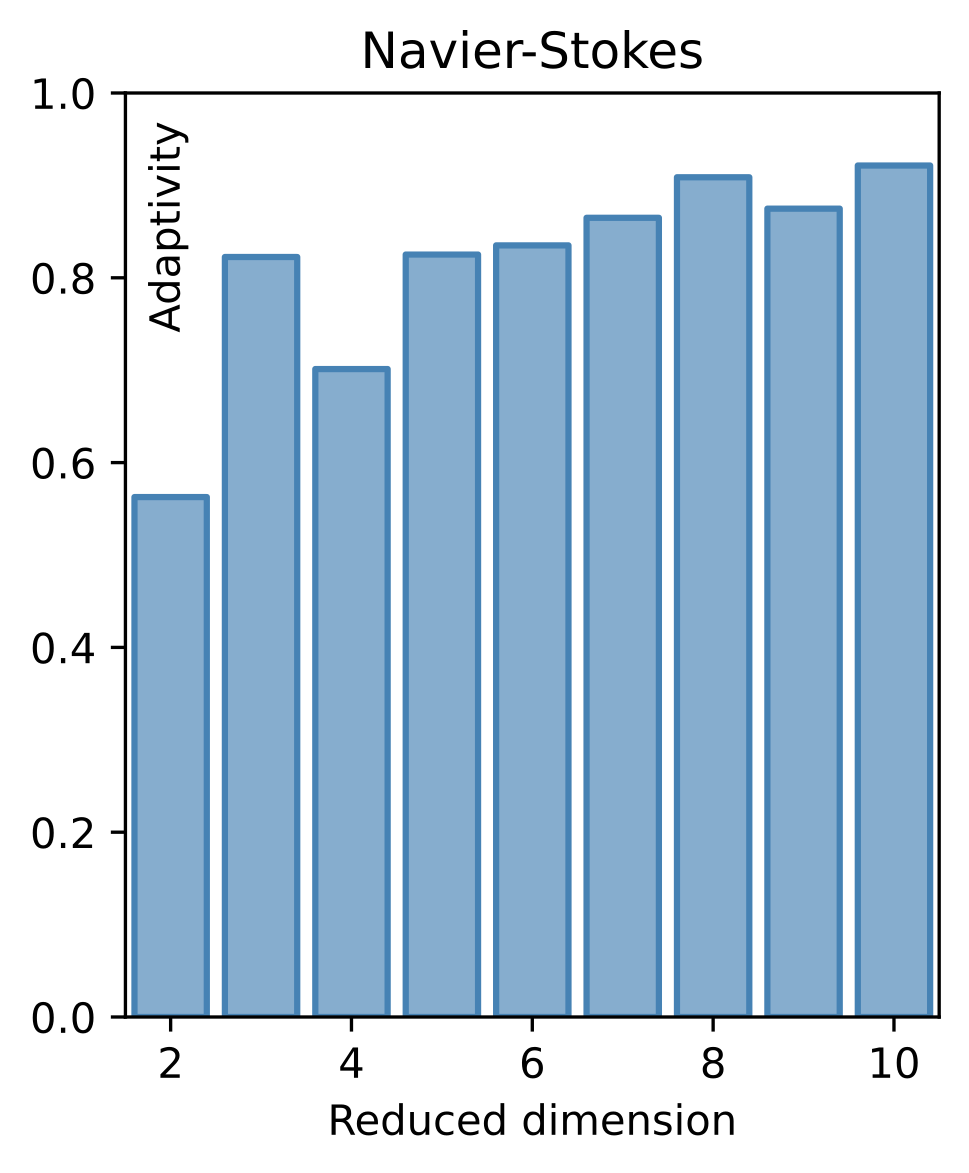}
    \caption{Adaptivity scores $\adapt(\dod)$ in the two case studies, for varying reduced dimension $n$. The scores are defined as in Eq. \eqref{eq:adapt-score}.}
    \label{fig:adaptivity}
\end{figure}

\begin{table}[ht!]
    \centering
    \begin{tabular}{lllll}\\\\
    \hline\hline
         \textbf{Case study} &  $\fomdim$ & $n$ & \textbf{Projection error} & \textbf{Adaptivity}\\\hline
         Eikonal Eq. & 9550 & 4 & 0.97\% & 0.56\\
         Navier-Stokes & 15202 & 4 & 4.32\% & 0.70\\
         \hline\hline
    \end{tabular}
    \caption{DOD architectures selected for model order reduction (see Section \ref{sec:exp2}).}
    \label{tab:dod performances}
\end{table}

$$$$

$$$$

$$$$

$$$$

$$$$

\section{Deep Orthogonal Decomposition for reduced order modeling}
\label{sec:dodrom}
As we anticipated in Section \ref{sec:dod}, a DOD with $n$-modes allows us to reduce the complexity of the problem by shifting our attention from the parameter-to-solution map, $(\mub,\nub)\mapsto\ufomp\in\mathbb{R}^{\fomdim}$
to the parameter-to-DOD-coefficient map, i.e.
\begin{equation}\label{eq:reducedmap}(\mub,\nub)\mapsto \mathbf{c}_{\mub,\nub}:=\dod_{\mub}^{\top}\mass\ufomp\in\mathbb{R}^{n},\end{equation}
Since $n\ll N_{h}$, learning the latter should be much easier when compared to the original problem.
Before coming to our own proposal on \textit{how} learn \eqref{eq:reducedmap}, it is worth making a few considerations of general interest. We summarize them below.

\subsection{General considerations}
Let $\phi:\mathbb{R}^{\ngeo}\times\mathbb{R}^{\nphys}\to\mathbb{R}^{n}$ 
be any algorithm of choice, be it intrusive or data-driven, that, given $(\mub,\nub)$ seeks to approximate the corresponding DOD coefficient $\mathbf{c}_{\mub,\nub}.$ The latter naturally gives rise to a DOD-based ROM via the ansatz
$$\uromp:=\dod_{\mub}\cdot\phi(\mub,\nub)\approx\ufomp,$$
where "$\cdot$" emphasizes the presence of a matrix-vector multiplication. The quality of such an approximation will depend both on the DOD, $\dod$, and on the parameter-to-coefficient algorithm, $\phi$. To appreciate this, let
$$\mathcal{E}_{A}:=\mathbb{E}_{\mub,\nub}^{1/2}\|\ufomp-\uromp\|^{2},$$
be the approximation error of the whole ROM, here measured according to a root-mean-square-error metric (RMSE). The two architectures, $\dod$ and $\phi$, are responsible for the following sources of error
$$\mathcal{E}_{\textnormal{DOD}}:=\mathbb{E}_{\mub,\nub}^{1/2}\|\ufomp-\dod_{\mub}\dod_{\mub}^{\top}\mass\ufomp\|^{2},\quad\quad\mathcal{E}_{\textnormal{coeff}}:=\mathbb{E}_{\mub,\nub}^{1/2}|\mathbf{c}_{\mub,\nub}-\phi(\mub,\nub)^{2}|,$$
respectively. Here, $\mathcal{E}_{\text{DOD}}$ represents the DOD projection error, while $\mathcal{E}_{\text{coeff}}$ reflects the quality of the approximation of the reduced problem \eqref{eq:reducedmap}: together, these two quantities uniquely characterize the general expressivity of the ROM. In fact, it is straightforward to see that the following identity holds.
\begin{lemma} 
\label{lemma:error}
For all DOD networks and all reduced algorithms, one has
\begin{equation}
\label{eq:splitting}
\mathcal{E}_{A}^{2}=\mathcal{E}_{\textnormal{DOD}}^{2}+\mathcal{E}_{\textnormal{coeff}}^{2}.\end{equation}
\end{lemma}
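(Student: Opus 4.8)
The plan is to establish the identity pointwise in $(\mub,\nub)$ and then integrate, exactly as in the proof of Lemma~\ref{lemma:equiv}. Fix an arbitrary pair $(\mub,\nub)$ and split the ROM error into two contributions,
$$\ufomp-\uromp=\big(\ufomp-\dod_{\mub}\dod_{\mub}^{\top}\mass\ufomp\big)+\big(\dod_{\mub}\dod_{\mub}^{\top}\mass\ufomp-\dod_{\mub}\phi(\mub,\nub)\big).$$
The first summand is the DOD residual; the second equals $\dod_{\mub}\big(\mathbf{c}_{\mub,\nub}-\phi(\mub,\nub)\big)$ and therefore lies in $\spann(\dod_{\mub})$. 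The whole proof reduces to recognizing that these two pieces are $\mass$-orthogonal, so that their squared $\|\cdot\|$-norms add (a Pythagorean identity in the $\mass$-inner product).

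The key step is thus the orthogonality. Since $\dod_{\mub}$ is $\mass$-orthonormal we have $\dod_{\mub}^{\top}\mass\dod_{\mub}=\mathbb{I}$ (cf. Section~\ref{sec:design}), and hence for any $\mathbf{z}=\dod_{\mub}\mathbf{a}\in\spann(\dod_{\mub})$,
$$\big(\ufomp-\dod_{\mub}\dod_{\mub}^{\top}\mass\ufomp\big)^{\top}\mass\,\mathbf{z}=\ufomp^{\top}\mass\dod_{\mub}\mathbf{a}-\ufomp^{\top}\mass\dod_{\mub}\big(\dod_{\mub}^{\top}\mass\dod_{\mub}\big)\mathbf{a}=0.$$
In other words, the DOD residual is $\mass$-orthogonal to the range of $\dod_{\mub}$, which contains the second summand. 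Granting this, the Pythagorean theorem gives
$$\|\ufomp-\uromp\|^{2}=\|\ufomp-\dod_{\mub}\dod_{\mub}^{\top}\mass\ufomp\|^{2}+\|\dod_{\mub}\big(\mathbf{c}_{\mub,\nub}-\phi(\mub,\nub)\big)\|^{2}.$$
The last term simplifies once more by $\mass$-orthonormality: for any $\mathbf{w}$ one has $\|\dod_{\mub}\mathbf{w}\|^{2}=\mathbf{w}^{\top}\dod_{\mub}^{\top}\mass\dod_{\mub}\mathbf{w}=|\mathbf{w}|^{2}$, so that $\|\dod_{\mub}(\mathbf{c}_{\mub,\nub}-\phi(\mub,\nub))\|^{2}=|\mathbf{c}_{\mub,\nub}-\phi(\mub,\nub)|^{2}$. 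Taking $\mathbb{E}_{\mub,\nub}$ of the resulting pointwise identity and recalling the definitions of $\mathcal{E}_A$, $\mathcal{E}_{\textnormal{DOD}}$ and $\mathcal{E}_{\textnormal{coeff}}$ yields \eqref{eq:splitting}.

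There is essentially no serious obstacle: the argument hinges entirely on the algebraic fact $\dod_{\mub}^{\top}\mass\dod_{\mub}=\mathbb{I}$ and is a verbatim reuse of the orthogonal-splitting mechanism already employed in Lemma~\ref{lemma:equiv}. The only point requiring a moment of care is to choose the decomposition so that one piece lands in $\spann(\dod_{\mub})$ and the other in its $\mass$-orthogonal complement before invoking Pythagoras; everything else is routine, and in particular the identity holds for \emph{any} choice of $\dod$ and $\phi$, with no regularity or optimality assumption on either map.
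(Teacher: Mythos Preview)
Your proposal is correct and follows essentially the same route as the paper: both establish the pointwise Pythagorean identity by observing that the DOD residual is $\mass$-orthogonal to $\spann(\dod_{\mub})$ while $\dod_{\mub}(\mathbf{c}_{\mub,\nub}-\phi(\mub,\nub))$ lies in it, and then take expectations. Your explicit verification of the orthogonality via $\dod_{\mub}^{\top}\mass\dod_{\mub}=\mathbb{I}$ is slightly more detailed than the paper's appeal to ``classical properties of linear projections,'' but the argument is the same.
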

\begin{proof}
    We shall prove the stronger identity below,
    \begin{equation}
\label{eq:splittingwise}
\|\ufomp-\uromp\|^{2}=\|\ufomp-\dod_{\mub}\dod_{\mub}^{\top}\mass\ufomp\|^{2}+|\mathbf{c}_{\mub,\nub}-\phi(\mub,\nub)|^{2}.\end{equation}
    holding for all $\mub\in\pgeo$ and all $\nub\in\pphys$. Note, in fact, that \eqref{eq:splitting} is just \eqref{eq:splittingwise} in expectation.
    To see that \eqref{eq:splittingwise} is valid, let $(\mub,\nub)\in\pgeo\times\pphys$. Since $\dod_{\mub}$ is orthonormal, we have
    $$|\mathbf{c}_{\mub,\nub}-\phi(\mub,\nub)|^{2}=\|\dod_{\mub}\mathbf{c}_{\mub,\nub}-\dod_{\mub}\phi(\mub,\nub)\|^{2}=\|\dod_{\mub}\dod_{\mub}^{\top}\mass\ufomp-\uromp\|^{2}.$$
    We now notice that, by definition, $$\left(\dod_{\mub}\dod_{\mub}^{\top}\mass\ufomp-\uromp\right)\in\text{span}\left(\dod_{\mub}\right).$$
    At the same time, by classical properties of linear projections,
    $$\left(\ufomp-\dod_{\mub}\dod_{\mub}^{\top}\mass\ufomp\right)\perp\text{span}\left(\dod_{\mub}\right).$$ Then, by orthogonality,
    \begin{multline*}
    \|\ufomp-\dod_{\mub}\dod_{\mub}^{\top}\mass\ufomp\|^{2}+\|\dod_{\mub}\dod_{\mub}^{\top}\mass\ufomp-\uromp\|^{2}=\\=\|\ufomp-\cancel{\dod_{\mub}\dod_{\mub}^{\top}\mass\ufomp}+\cancel{\dod_{\mub}\dod_{\mub}^{\top}\mass\ufomp}-\uromp\|^{2}=\\=\|\ufomp-\uromp\|^{2},\end{multline*}
    as claimed.
\end{proof}

This splitting shows that errors in the approximation of the reduced map propagate through the DOD in a stable way, that is: an error of $\epsilon$ in the approximation of the reduced coefficients is reflected in a corresponding error of (at most) $\epsilon$ at FOM level. We note that, typically, this property is exclusive to projection methods. Nonlinear techniques based on, e.g., autoencoders, instead, might suffer from error inflation. There, in fact, reduced coefficients are replaced by latent variables, and the lifting from $\mathbb{R}^{n}\to\mathbb{R}^{N_{h}}$ is obtained via a nonlinear decoder $\Psi.$ Consequently, errors at the latent level can be bounded, at most, as $$\|\Psi(\mathbf{c}_{\mub,\nub})-\Psi(\phi(\mub,\nub))\|\le L_{\Psi}|\mathbf{c}_{\mub,\nub}-\phi(\mub,\nub)|,$$ where $L_{\Psi}$ is the Lipschitz constant of the decoder module. In particular, if $L_{\Psi}>1$, errors may grow when passing through the decoder.

\subsection{Learning the DOD coefficients}
\label{subsec:dodnn}
The hybrid nature of the DOD projector opens up a wide spectrum of possibilities for computing DOD coefficients, ranging from intrusive to data-driven approaches. For example, during the \textit{online} phase, the DOD basis could be used to project and solve the governing equations, ultimately mimicking the idea underlying the POD-Galerkin ROMs. However, this approach would face major limitations when dealing, e.g., with nonlinear problems, as one would need to complement the DOD with a suitable hyperreduction strategy, or when facing operators with nonaffine dependency on the parameters, as that would quickly increase the online computational cost (in fact, in order to project the equations, one would still need to assemble the FOM first).

In light of this, and in order to be as general as possible, here we shall focus on non-intrusive strategies. 
Given a candidate model class $\mathcal{C}\subset\{\tilde{\phi}:\mathbb{R}^{\ngeo}\times\mathbb{R}^{\nphys}\to\mathbb{R}^{n}\},$ which might consist of, e.g., neural network architectures, polynomials or Gaussian processes, the idea is to construct the reduced algorithm $\phi$ via mean-square regression, namely
$$\phi:=\argmin_{\tilde{\phi}\in\mathcal{C}}\frac{1}{\ntrain}\sum_{i=1}^{\ntrain}|\mathbf{c}_{\mub_{i}, \nub_{i}}-\tilde{\phi}(\mub_{i},\nub_{i})|^{2},$$
where $\mathbf{c}_{\mub_{i}, \nub_{i}}$ are defined according to \eqref{eq:reducedmap}.

In this work, we explore the use of neural network architectures, thus obtaining a ROM strategy that resambles the so-called POD-NN approach \cite{hesthaven2018non}, except for the presence of the adaptive DOD basis. In this sense, the following could be referred to as "DOD-NN". 
\\\\
The idea is to construct $\phi$ using a segregated architecture comprised of two submodules, $\phi_{1}$ and $\phi_{2}$, as to further differentiate between $\mub$ and $\nub$. More precisely, we design $\phi$ as
$$\phi(\mub,\nub):=\text{diag}\left[\phi_{1}(\mub)^{\top}\phi_{2}(\nub)\right],$$
where $\phi_{1}:\mathbb{R}^{\ngeo}\to\mathbb{R}^{m\times n}$ and $\phi_{2}:\mathbb{R}^{\nphys}\to\mathbb{R}^{m\times n}$ are two matrix-valued networks (implemented using classical architectures taking values in $\mathbb{R}^{mn}$, followed by a reshape layer). Mathematically speaking, this construction is equivalent to a separation of variables approach, where a function of two variables, $\mub$ and $\nub$, is expressed as the truncated sum (up to $m$ terms) of simpler functions. Similar strategies have also been explored elsewhere, as in, e.g., DeepONets \cite{lu2021learning} and POD-MINN \cite{vitullo2024nonlinear}. Here, the \textit{diag} operator is merely a matter of mathematical notation: in practice, we refrain from calculating the matrix product $\phi_{1}(\mub)^{\top}\phi_{2}(\nub)$ and instead compute the Hadamard product of $\phi_{1}(\mub)$ and $\phi_{2}(\nub)$, followed by a columnwise summation. 
\begin{figure}
    \centering
    \includegraphics[width=\textwidth]{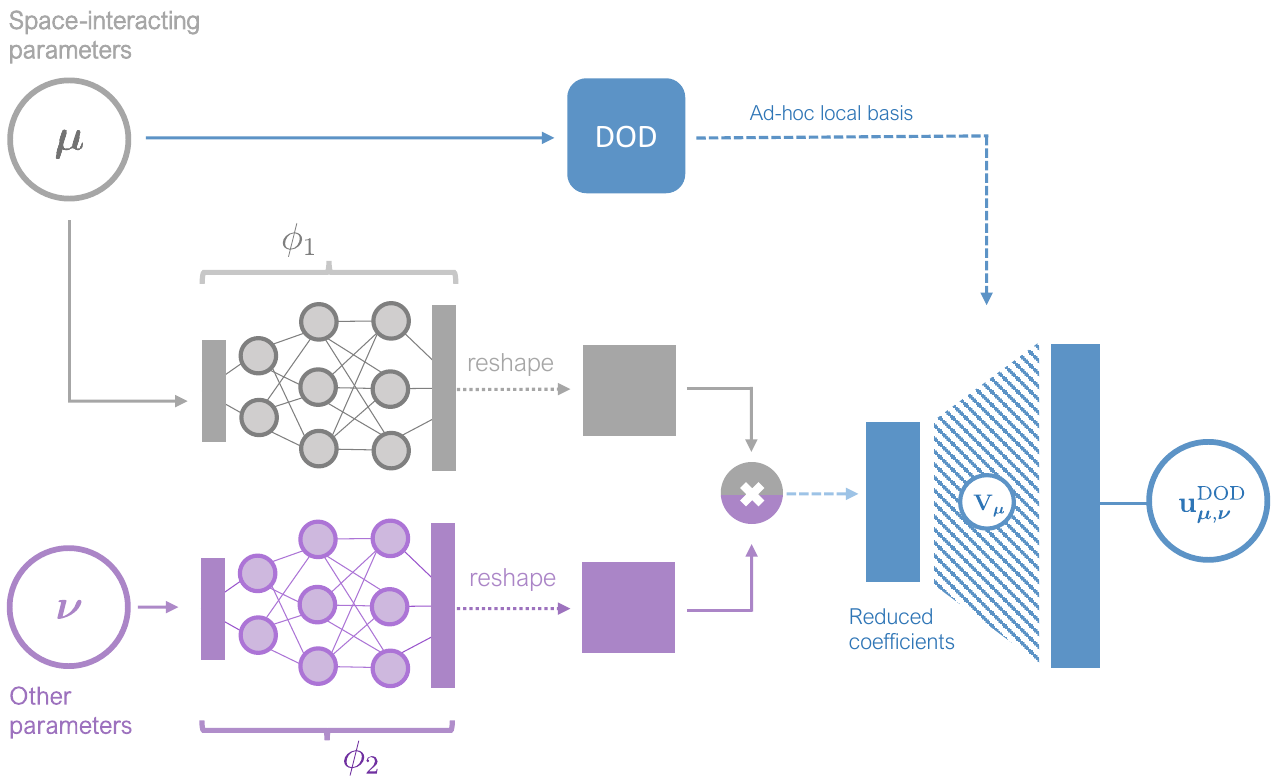}
    \caption{Sketch of the DOD-NN approach, Section \ref{subsec:dodnn}.}
    \label{fig:dod-nn}
\end{figure}
\;\\\\With this setup, the DOD-NN ROM, $\mathbf{u}_{\mub,\nub}^{\text{DOD-NN}}\approx\ufomp$, can be summarized in formulas as
\begin{multline}
\mathbf{u}_{\mub,\nub}^{\text{DOD-NN}}:=\dod_{\mub}\phi(\mub,\nub)=\\=\ambient\text{ORTH}\left(\left[R_{1}(s_{\mub}),\dots,R_{n}(s_{\mub})\right]\right)\cdot\text{diag}\left[\phi_{1}(\mub)^{\top}\phi_{2}(\nub)\right],
\end{multline}
or, visually, as in Figure \ref{fig:dod-nn}. In general, the accuracy of the approximation will depend on: the richness of the ambient space $\ambient$, the expressivity of the inner DOD module, $\tilde{\dod}$, and the quality of the parameter-to-DOD-coefficient approximation, $\phi$. In fact, it is straightforward to see that, as a direct consequence of Lemma \ref{lemma:equiv} and Lemma \ref{lemma:error}, the following error decomposition formula is given.

\begin{corollary}
    Let $(\mub,\nub)\mapsto\mathbf{u}_{\mub,\nub}^{\textnormal{DOD-NN}}$ be a \textnormal{DOD-NN} reduced order model with ambient matrix $\ambient$, inner DOD module $\tilde{\dod}$, and reduced network $\phi$. Then,
    \begin{align}  
    \label{eq:error-decomposition}
    \nonumber
    \mathbb{E}_{\mub,\nub}\|\ufomp-\mathbf{u}_{\mub,\nub}^{\textnormal{DOD-NN}}\|^{2}=\;\;&\mathbb{E}_{\mub,\nub}\|\ufomp-\ambient\ambient^{\top}\mass\ufomp\|^{2}+\\\nonumber&\mathbb{E}_{\mub,\nub}|\ambient^{\top}\mass\ufomp-\tilde{\dod}_{\mub}\tilde{\dod}^{\top}_{\mub}\ambient^{\top}\mass\ufomp|^{2}+\\&\mathbb{E}_{\mub,\nub}|\tilde{\dod}_{\mub}^{\top}\ambient^{\top}\mass\ufomp-\phi(\mub,\nub)|^{2},\end{align}
    where we recall that $\mathbf{u}_{\mub,\nub}^{\textnormal{DOD-NN}}:=\dod_{\mub}\phi(\mub,\nub)$ with $\dod_{\mub}:=\ambient\tilde{\dod}_{\mub}.$ The three terms at the right-hand-side of \eqref{eq:error-decomposition} are the (i) ambient error, the (ii) intrinsic DOD projection error and the (iii) coefficients error, respectively.
\end{corollary}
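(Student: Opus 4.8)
The plan is to obtain the three-term decomposition by chaining together the two pointwise identities already established in Lemma \ref{lemma:error} and inside the proof of Lemma \ref{lemma:equiv}, and then passing to expectation.

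First I would invoke the pointwise splitting \eqref{eq:splittingwise} from Lemma \ref{lemma:error}, applied with $\uromp=\mathbf{u}_{\mub,\nub}^{\textnormal{DOD-NN}}$ and $\phi$ the DOD-NN reduced network. This yields, for every $(\mub,\nub)$,
$$\|\ufomp-\mathbf{u}_{\mub,\nub}^{\textnormal{DOD-NN}}\|^{2}=\|\ufomp-\dod_{\mub}\dod_{\mub}^{\top}\mass\ufomp\|^{2}+|\mathbf{c}_{\mub,\nub}-\phi(\mub,\nub)|^{2},$$
which separates the total error into the DOD projection error and the coefficient error.

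Next I would further split the projection term using the intermediate pointwise identity obtained in the proof of Lemma \ref{lemma:equiv} (the step just before averaging over the training set), namely
$$\|\ufomp-\dod_{\mub}\dod_{\mub}^{\top}\mass\ufomp\|^{2}=\|\ufomp-\ambient\ambient^{\top}\mass\ufomp\|^{2}+|\ambient^{\top}\mass\ufomp-\tilde{\dod}_{\mub}\tilde{\dod}_{\mub}^{\top}\ambient^{\top}\mass\ufomp|^{2},$$
which isolates the ambient error from the intrinsic DOD error. The only bookkeeping remark needed is that, since $\dod_{\mub}=\ambient\tilde{\dod}_{\mub}$ and $\ambient^{\top}\mass\ambient=\mathbb{I}$, the reduced coefficient satisfies $\mathbf{c}_{\mub,\nub}=\dod_{\mub}^{\top}\mass\ufomp=\tilde{\dod}_{\mub}^{\top}\ambient^{\top}\mass\ufomp$, so the coefficient error term above coincides verbatim with the third summand in the claimed decomposition.

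Finally I would substitute the second identity into the first and take $\mathbb{E}_{\mub,\nub}$ of both sides; linearity of expectation then produces exactly \eqref{eq:error-decomposition}. I do not expect any genuine obstacle, since both inputs are \emph{exact pointwise identities} rather than inequalities, so the result is purely a matter of composing two orthogonal decompositions and relabeling the coefficient vector. The one point requiring slight care is that one must use the pointwise form hidden in the proof of Lemma \ref{lemma:equiv}, rather than its averaged statement, so that the splitting can be combined term-by-term with Lemma \ref{lemma:error} \emph{before} integrating.
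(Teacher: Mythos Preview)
Your proposal is correct and matches the paper's own reasoning: the corollary is stated there as a ``direct consequence of Lemma~\ref{lemma:equiv} and Lemma~\ref{lemma:error}'' with no further proof, and your chaining of the two pointwise identities followed by expectation is exactly the intended argument. Your observation that one must use the \emph{pointwise} identity hidden in the proof of Lemma~\ref{lemma:equiv} (rather than its training-average formulation) is a pertinent clarification, since the corollary concerns expectations rather than empirical averages.
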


\begin{remark}
    Once a DOD architecture has been trained, replacing the orthonormalization block, ORTH, with a different one has no effect on the projection error. For instance, switching from a reduced QR algorithm to a Gram-Schidmt routine (and vice versa) has no impact on the accuracy of the DOD projection. In fact, as we noted in Section \ref{subsec:adaptivity}, the projection error depends only on the underlying subspace. However, from a practical perspective, this observation can be very useful. In fact, although changes in the ORTH block do not affect the DOD itself, they do have an impact on the DOD coefficients. For example, during our experiments, we observed the following: (a) when it comes to DOD training itself, a reduced QR module works best, as its remarkable efficiency can significantly speed up the optimization of the architecture; (b) when learning the DOD coefficients, instead, switching to a Gram-Schmidt block can be a more favorable option, as, empirically, it seems to facilitate training the reduced network $\phi$. 
\end{remark}

\section{Numerical experiments: model order reduction}
\label{sec:exp2}
We are now ready to extend the analysis presented in Section \ref{sec:exp1}, which was originally devoted to the sole purpose of dimensionality reduction, considering the application of the DOD-NN strategy for model order reduction. To do so, we shall consider the same case studies discussed in Section \ref{sec:exp1}, and, for each of them, proceed as follows.

First, we fix a reduced dimension $n$ and a corresponding DOD network. In doing so, we shall opt for a suitable compromise between: reconstruction accuracy, dimensionality reduction, and model volatility. Then, following the ideas presented in Section \ref{subsec:dodnn}, we implement and train a reduced network $\phi$. To do so, we rely on the same training data used for the DOD. Finally, we quantify the quality of the approximation by computing an empirical test error, calculated as
$$\textnormal{MRE}:=\frac{1}{N_{\textnormal{test}}}\sum_{i=1}^{N_{\textnormal{test}}}\frac{\|\ub_{\check{\mub}_{i},\check{\nub}_{i}}-\ub_{\check{\mub}_{i},\check{\nub}_{i}}^{\textnormal{DOD}}\|}{\|\ub_{\check{\mub_{i}},\check{\nub_{i}}}\|},$$
where we recall that $\ufomp^\text{DOD}:=\dod_{\mub}\phi(\mub,\nub)$, whereas $\{\check{\mub}_{i},\check{\nub}_{i}\}_{i=1}^{N_{\textnormal{test}}}$ are a collection of randomly sampled parameter configurations, drawn independently from the training set.

To better understand and appreciate the capabilities of the proposed approach, we also compare the performances of DOD-NN with two benchmark ROMs. In order to make the comparison as meaningful as possible, we first note the following. By construction, 
$$\ufomp^{\textnormal{DOD}}\in\spann(\ambient)$$
for all $\mub$ and all $\nub$: that is, the outputs of a DOD-NN module are always elements of the ambient space. In particular, the inner module of the DOD-NN architecture,
$$(\mub,\nub)\mapsto \tilde{\dod}_{\mub}\phi(\mub,\nub),$$
can be interpreted as approximating of the parameter-to-ambient-coefficients map. In light of this, it is perfectly reasonable to ask whether a classical POD-NN approach, with $\ambient$ as POD matrix, would provide better results, i.e. with the ansatz,
\begin{equation}
    \label{eq:pod-ambient}
    \ufomp\approx\ambient\phi_{\text{POD}}(\mub,\nub)
\end{equation}
where $\phi_{\text{POD}}:\mathbb{R}^{\ngeo}\times\mathbb{R}^{\nphys}\to\mathbb{R}^{N_{A}}$ is some neural network model. If so, the complex structure of the DOD-NN would be unmotivated, raising significant questions about the overall approach. Because of this, we believe this comparison to be highly valuable (also considering that the POD-NN approach is now a well-established technique in the ROM community, see, e.g., \cite{discacciati2024model, jacquier2021non, pichi2023artificial, wang2019non, xu2024predictions}). In view of these facts, we shall compare the performances of DOD-NN with the following benchmark models.
\begin{itemize}
    \item \textbf{Benchmark 1}. Starting from Eq. \eqref{eq:pod-ambient}, we design $\phi_{\text{POD}}$ as a classical DNN, taking as input the stacked vector of parameters $[\mub,\nub]\in\mathbb{R}^{\ngeo+\nphys}$.
    \item \textbf{Benchmark 2}. Here, we still rely on Eq. \eqref{eq:pod-ambient} but adopt a segregated architecture, $\phi(\mub,\nub)=\text{diag}\left[\phi_{1}(\mub)^{\top}\phi_{2}(\nub)\right]$, thus mimicking the idea in the DOD-NN approach.
\end{itemize}
In both cases, we train the benchmark models by minimizing the mean square error associated with the discrepancy $|\ambient^{\top}\mass\ufomp-\phi_{\text{POD}}(\mub,\nub)|.$ Furthermore, in order to make the comparison as fair as possible, we design the networks $\phi_{\text{POD}}$ to have the same complexity of the overall DOD-NN module. That is, we shall choose the number of layers and neurons so that $\phi_{\text{POD}}$ has approximately the same number of trainable parameters of the entire DOD-NN (thus, those of $\phi$ plus those of $\tilde{\dod}$).

\subsection{Results}
Table \ref{tab:dod performances} contains the main information on the DOD architectures selected for the model order reduction phase. In general, we have opted for those architectures showing a satisfactory accuracy but also moderate volatility. In fact, a higher variability of the DOD basis is typically reflected in a higher volatility of the DOD coefficients, $\mathbf{c}_{\mub}=\dod_{\mub}^{\top}\mass\ufomp$; thus, less volatile models are likely to yield representations that are simpler to learn.

\begin{table}
    \centering
    \begin{tabular}{lll}
    \hline\hline
         \textbf{ROM} &  \textbf{Eikonal Eq.} & \textbf{Navier-Stokes} \\\hline
         DOD-NN & 4.42\% & 5.26\% \\
         Benchmark 1 & 7.06\% &  6.83\% \\
         Benchmark 2 & 6.90\% & 9.52\% \\
         \hline\hline
    \end{tabular}
    \caption{ROM performances (relative $L^2$ error) for the three case studies presented in Section \ref{sec:exp1}. Benchmark models are POD-NN-like ROMs defined as in Eq. \eqref{eq:pod-ambient}.}
    \label{tab:rom performances}
\end{table}

A global overview of the final results is reported in Table \ref{tab:rom performances}. In particular, the DOD-NN approach consistently outperforms its POD-NN counterparts, at times exhibiting twice the accuracy. Overall, considering the inherently data-driven nature of the DOD-NN, we find that its performance is quite satisfactory, with relative errors always below 6\%. 

Interestingly, the superiority of DOD-NN is not only quantitative but also qualitative, as shown in Figures \ref{fig:elba-comparison}-\ref{fig:nstokes-comparison}. Here, we compare FOM solutions, DOD-NN approximations, and benchmark outputs for unseen values of the model parameters. When looking at these pictures, it is evident that the DOD-NN is much more aware of the geometrical parameters, effectively capturing their effect over the global solution field. On the contrary, its POD-NN counterparts are more likely to yield unphysical results: see, for example, Figure \ref{fig:nstokes-comparison}, where the benchmark model erroneously predicts the presence of fluid flow through the obstacle.

\begin{figure}
    \centering
    \hspace{0.75cm}Ground truth\hspace{3cm}DOD-NN
    \hspace{2.75cm}Benchmark 2\hfill\vspace{-1cm}
    \includegraphics[width=0.88\textwidth]{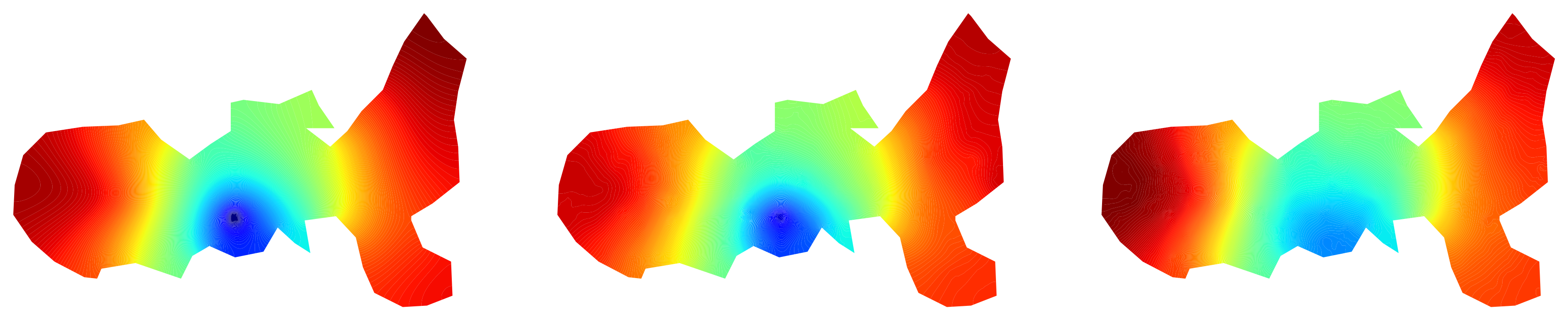}    \includegraphics[width=0.11\textwidth]{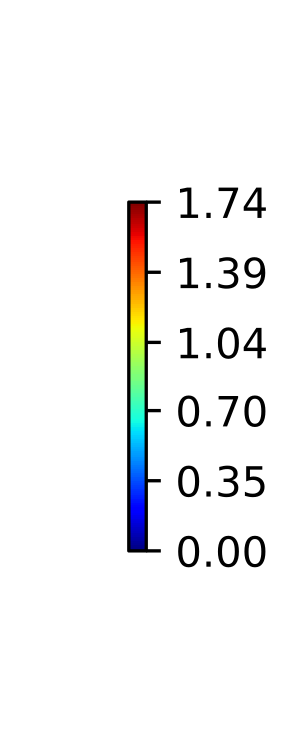}\vspace{-1cm}
    \includegraphics[width=0.88\textwidth]{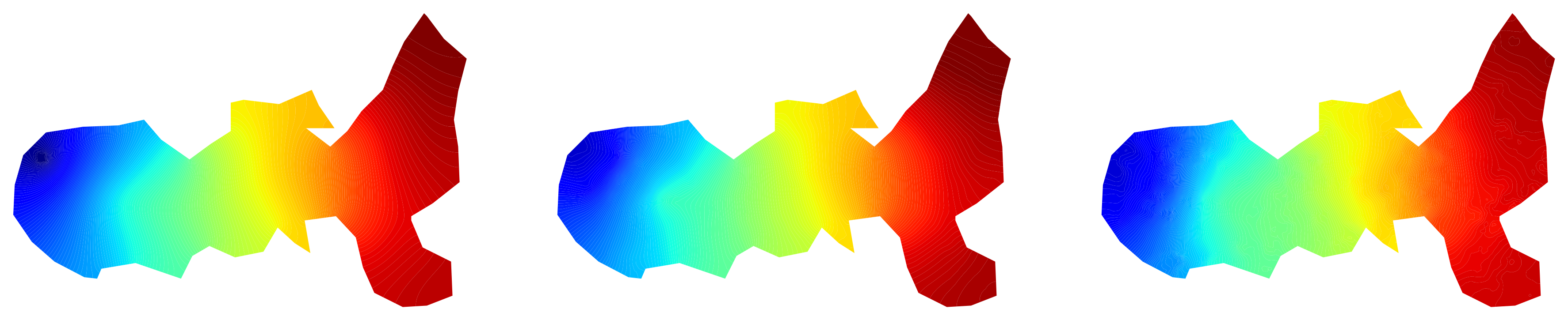}
    \includegraphics[width=0.11\textwidth]{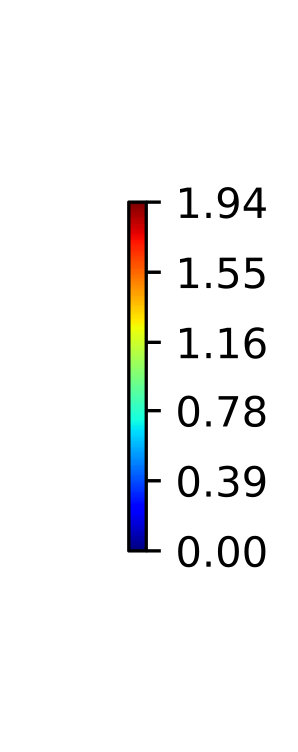}    
    \caption{Comparison between FOM (left), DOD-NN (center) and benchmark ROM (right) for two unseen configurations of the model parameters appearing in the Eikonal equation example. NB: only the best benchmark model is reported, cf. Table \ref{tab:rom performances}.}
    \label{fig:elba-comparison}
\end{figure}

\begin{figure}[tb]
    \centering
    Ground truth\hspace{2.7cm}DOD-NN
    \hspace{2.7cm}Benchmark 1
    \includegraphics[width=0.9\textwidth]{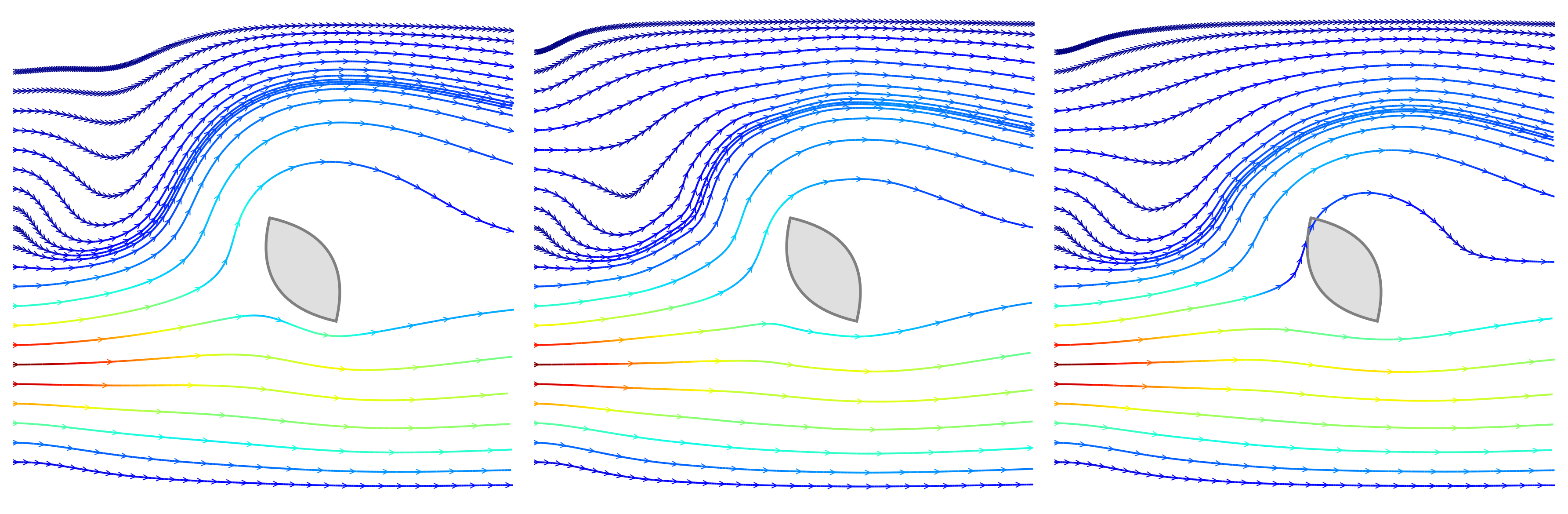}
    \includegraphics[width=0.9\textwidth]{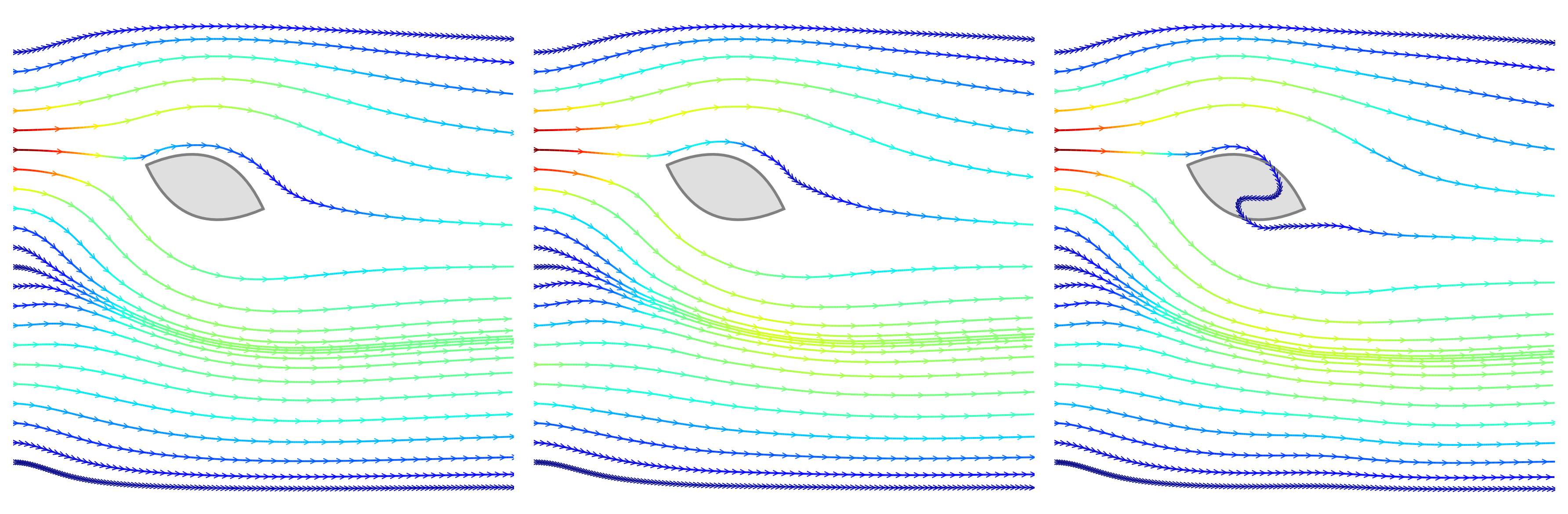}    
    \caption{Streamlines of the velocity fields for the Navier-Stokes example: comparison between FOM solutions (left), DOD-NN approximations (center) and benchmark outputs (right) for two unseen configurations of the model parameters $(\mub,\nub)$. Different colors indicate different magnitudes of the velocity field (lower values in blue, higher values in red). NB: only the best benchmark model is reported, cf. Table \ref{tab:rom performances}.}
    \label{fig:nstokes-comparison}
\end{figure}

\section{Conclusions}
\label{sec:conclusions}
We presented a novel approach, termed DOD, for dimensionality reduction and reduced-order modeling of parametrized stationary PDEs, which, by leveraging on deep neural networks, can effectively approximate the solution manifold through a continuously adaptive local basis. Compared to other existing approaches, the DOD stands out for its ability to handle problems with a slow decaying Kolmogorov $n$-width, providing both rich and interpretable latent representations. Additionally, DOD projections can be combined with both intrusive and non-intrusive techniques, allowing domain practitioners to freely choose between physics-based and data-driven ROMs. For instance, in this work, we explored the use of a fully non-intrusive ROM, termed DOD-NN.

In this work, we focused exclusively on stationary PDEs. In principle, integrating time as an additional parameter (to be coupled with $\mub$) should make the extension of our approach to time-dependent problems straightforward. However, a more in-depth exploration may unveil more advanced strategies, potentially leading to more sophisticated DOD-based ROMs capable of enforcing specific properties inherent to dynamical systems (Markovianity, iterative structure, etc.).
Parallel to this, another interesting question would be to investigate suitable generalizations of the DOD capable of tackling PDEs with parameter-dependent spatial domains. Right now, this is only possible in tandem with fictitious domain or mesh deformation approaches. In this concern, radically different tools, such as graph neural networks, might offer valuable insights. Similarly, another open question is whether one can \textit{discover} the decoupling of the parameter space from data, i.e. how to distinguish between $\mub$ and $\nub$, without relying on prior knowledge, similarly to what happens with slow-fast decompositions \cite{zielinski2022discovery}.
We leave these considerations for future work.

\section*{Acknowledgments}
Paolo Zunino acknowledges the support of the grant MUR PRIN 2022 No. 2022WKWZA8 "Immersed methods for multiscale and multiphysics problems (IMMEDIATE)” part of the Next Generation EU program. Andrea Manzoni acknowledges the PRIN 2022 Project “Numerical approximation of uncertainty quantification problems for PDEs by multi-fidelity methods (UQ-FLY)” (No. 202222PACR), funded by the European Union - NextGenerationEU and the project FAIR (Future Artificial Intelligence Research), funded by the NextGenerationEU program within the PNRR-PE-AI scheme (M4C2, Investment 1.3, Line on Artificial Intelligence). 
Nicola R. Franco and Paolo Zunino acknowledge the project Cal.Hub.Ria (Piano Operativo Salute, traiettoria 4), funded by MSAL.
The present research is part of the activities of project Dipartimento di Eccellenza 2023-2027, Department of Mathematics, Politecnico di Milano, funded by MUR. NRF, AM and PZ are members of the Gruppo Nazionale per il Calcolo Scientifico (GNCS) of the Istituto Nazionale di Alta Matematica (INdAM). 


\normalem
\printbibliography






\appendix

\section{Proof of Theorem \ref{theorem:dod}}
\label{sec:appendix:proof}
In what follows, with little abuse of notation, we write $|\cdot|$ to indicate both the Euclidean norm and the Frobenius norm.\\\\
\noindent\textit{Proof}. Fix $n\in\mathbb{N}$. Let $\lambda>0$ be the smallest eigenvalue of the positive definite matrix $\mass$. To start, we note that, if $\VV$ is a matrix whose columns are orthonormal with respect to $\mass$, then all the entries of $\VV$ must be smaller, in modulus, than $1/\sqrt{\lambda}.$ To see this, let $\mathbb{I}_{n}$ be the $n\times n$ identity matrix and assume that
$$\VV^{\top}\mass\VV=\mathbb{I}_{n}.$$
Let $\mathbb{L}:=\mass^{1/2}$ be the unique positive definite matrix for which $\mathbb{L}^{\top}\mathbb{L}=\mass,$ so that its smallest eigenvalue equals $\sqrt{\lambda}.$ Fix any entry $v_{i,j}$ in $\VV$ and let $\mathbf{v}_{i}$ be the corresponding column. We have
$$1=\mathbf{v}_{i}^{\top}\mass\mathbf{v}_{i}=|\mathbf{L}\mathbf{v}_{i}|^{2}\ge\lambda|\mathbf{v}_{i}|^{2}\ge\lambda v_{i,j}^{2},$$
implying $v_{i,j}\le 1/\sqrt{\lambda}.$ Having this in 
mind, we shall assume, without loss of generality, that $\lambda=1$ (if not, we may always rescale).
\\\\
Let $J:\pgeo\times[-1,1]^{\fomdim\times n}\to\mathbb{R}$ be the following objective functional
$$J(\mub,\mathbb{V})=\mathbb{E}_{\nub}\|\ufomp-\mathbb{V}\mathbb{V}^{\top}\mass\ufomp\|,$$
and fix any $\varepsilon>0$. We recall that, since $J$ is continuous and $\pgeo\times[-1,1]^{\fomdim}$ is compact, $J$ is uniformly continuous. In particular, there exists $\delta>0$ such that
$$|\mub-\mub'|+|\mathbb{V}-\mathbb{V}'|<\delta\implies|J(\mub,\mathbb{V})-J(\mub',\mathbb{V}')|<\varepsilon.$$
Then, classical results on measurable selections (see, e.g., Lemma \ref{lemma:optselec}) show that there exists a Borel measurable map $s:\pgeo\to[-1,1]^{\fomdim\times n}$ acting as
$$s:\mub\to\argmin_{\mathbb{V}\in[-1,1]^{N_{h}\times n}}J(\mub,\mathbb{V}).$$
Since $s$ is bounded, it follows that $s\in L^{1}(\pgeo;\mathbb{R}^{\fomdim\times n})$ in the Bochner sense. In particular, there exists a deep ReLU neural network $\mathbf{V}:\pgeo\to\mathbb{R}^{\fomdim\times n}$ such that $\mathbb{E}|\mathbf{V}_{\mub}-s(\mub)|<\delta$; furthermore, we can assume, without loss of generality, that $\mathbf{V}$ takes values in $[-1,1]^{\fomdim\times n}$, see, e.g., Lemma \ref{lemma:density}. 

By uniform continuity of $J$ it follows that $$|J(\mub,s(\mub))-J(\mub,\mathbf{V}_{\mub})|<\varepsilon\quad\quad\forall\mub\in\pgeo.$$
Consequently, as $\mathbb{E}_{\mub,\nub}\|\ufomp-\mathbf{V}_{\mub}\mathbf{V}_{\mub}^{\top}\mass\ufomp\|=\mathbb{E}_{\mub}\left[J(\mub,\mathbf{V}_{\mub})\right]$, we have
$$\mathbb{E}_{\mub,\nub}\|\ufomp-\mathbf{V}_{\mub}\mathbf{V}_{\mub}^{\top}\mass\ufomp\|\le\mathbb{E}_{\mub}\|J(\mub,\mathbf{V}_{\mub})-J(\mub,s(\mub))\| + \mathbb{E}_{\mub}\left[J(\mub,s(\mub))\right]$$
 \begin{equation*}
\implies \mathbb{E}_{\mub,\nub}\|\ufomp-\mathbf{V}_{\mub}\mathbf{V}_{\mub}^{\top}\mass\ufomp\|<\varepsilon+\mathbb{E}_{\mub}\left[J(\mub,s(\mub))\right].\hspace{9.5em}\;
 \end{equation*}
Since
\begin{multline*}
\mathbb{E}_{\mub}\left[J(\mub,s(\mub))\right]=\mathbb{E}_{\mub}\left[\min_{\mathbb{V}}J(\mub,\mathbb{V})\right]\le\\\le\mathbb{E}_{\mub}\left[\min_{\mathbb{V}}\sup_{\nub}\|\ufomp-\mathbb{V}\mathbb{V}^{\top}\mass\ufomp\|\right]=\mathbb{E}_{\mub}\left[d_{n}(\solmanifold_{\mub})\right],
\end{multline*}
the conclusion follows.\hfill$\square$

\newtheoremstyle{lemmaA}
{18pt plus2pt minus1pt}
{18pt plus2pt minus1pt}
{\itshape}
{0pt}
{\bfseries}
{.}
{.5em}
{\thmname{#1} A\thmnumber{#2}%
  \thmnote{ {\the\thm@notefont(#3)}}}
\theoremstyle{lemmaA}
\newtheorem{lemmaA}{Lemma}
\labelformat{lemmaA}{A#1}

\begin{lemmaA}
    \label{lemma:density}
    Let $\mathbb{P}$ be a probability distribution over $\mathbb{R}^{p}.$ Let $\boldsymbol{s}:\mathbb{R}^{p}\to[-1,1]^{q}$ be a measurable map. Then, for every $\delta>0$, there exists a ReLU deep neural network $\boldsymbol{v}:\mathbb{R}^{p}\to\mathbb{R}^{q}$ such that
    $$\mathbb{E}|\boldsymbol{v}- \boldsymbol{s}|<\delta,$$
    where $\mathbb{E}$ denotes the expectation operator with respect to $\mathbb{P}.$ Furthermore, $\boldsymbol{v}$ can be chosen so that $\boldsymbol{v}\left(\mathbb{R}^{p}\right)\subseteq[-1,1]^{q}.$ 
\end{lemmaA}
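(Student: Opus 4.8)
The statement is an $L^1$-density result, so the plan is to reduce it to the classical universal approximation theorem for ReLU networks on a compact set, taking care of two complications that the hypotheses introduce: the measure $\mathbb{P}$ may have unbounded support, and the target network must be constrained to take values in $[-1,1]^q$. First I would observe that, since every entry of $\boldsymbol{s}$ lies in $[-1,1]$, one has $|\boldsymbol{s}|\le\sqrt{q}$ pointwise, so $\boldsymbol{s}\in L^1(\mathbb{P};\mathbb{R}^q)$. Because $\mathbb{P}$ is a Borel probability measure on the Polish space $\mathbb{R}^p$, it is Radon and hence tight: for any $\eta>0$ there is a compact $K\subset\mathbb{R}^p$ with $\mathbb{P}(\mathbb{R}^p\setminus K)<\eta$. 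By Lusin's theorem there is a closed set $F$ with $\mathbb{P}(\mathbb{R}^p\setminus F)<\eta$ on which $\boldsymbol{s}$ is continuous, and Tietze's extension theorem (applied coordinatewise, with the bounds $[-1,1]$ preserved) yields a continuous $g:\mathbb{R}^p\to[-1,1]^q$ with $g=\boldsymbol{s}$ on $F$ and $|g|\le\sqrt{q}$ everywhere.

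Next I would invoke the classical universal approximation theorem for ReLU networks (ReLU being non-polynomial) to obtain a ReLU network $\tilde{\boldsymbol{v}}:\mathbb{R}^p\to\mathbb{R}^q$ with $\sup_{x\in K}|\tilde{\boldsymbol{v}}(x)-g(x)|<\varepsilon'$. This $\tilde{\boldsymbol{v}}$ need not respect the range constraint, so I would post-compose it with the coordinatewise clipping map, which for a scalar $t$ is $\mathrm{clip}(t)=\sigma(t+1)-\sigma(t-1)-1$ with $\sigma=\max(0,\cdot)$ the ReLU. A direct check shows $\mathrm{clip}$ equals the identity on $[-1,1]$, maps $\mathbb{R}$ into $[-1,1]$, is $1$-Lipschitz, and is itself a two-layer ReLU network; hence $\boldsymbol{v}:=\mathrm{clip}\circ\tilde{\boldsymbol{v}}$ is a ReLU network with $\boldsymbol{v}(\mathbb{R}^p)\subseteq[-1,1]^q$. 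Crucially, since $g$ is already $[-1,1]^q$-valued and clipping is non-expansive and fixes the cube, we still have $\sup_{x\in K}|\boldsymbol{v}(x)-g(x)|\le\sup_{x\in K}|\tilde{\boldsymbol{v}}(x)-g(x)|<\varepsilon'$, so the clipping step does not spoil the approximation.

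Finally I would assemble the $L^1$ estimate by splitting the domain. Writing $A:=(\mathbb{R}^p\setminus K)\cup(\mathbb{R}^p\setminus F)$, so that $\mathbb{P}(A)<2\eta$, on $K\setminus A$ we have $\boldsymbol{s}=g$ and $|\boldsymbol{v}-\boldsymbol{s}|<\varepsilon'$, whereas on $A$ both $\boldsymbol{v}$ and $\boldsymbol{s}$ are bounded by $\sqrt{q}$, giving $|\boldsymbol{v}-\boldsymbol{s}|\le2\sqrt{q}$. Therefore
$$\mathbb{E}|\boldsymbol{v}-\boldsymbol{s}|\;\le\;\varepsilon'+2\sqrt{q}\,\mathbb{P}(A)\;\le\;\varepsilon'+4\sqrt{q}\,\eta,$$
and choosing $\eta$ and $\varepsilon'$ small enough makes the right-hand side smaller than $\delta$. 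The routine parts are Lusin/Tietze and the universal approximation step; the genuinely delicate point, and the one I would highlight, is the interplay between the two constraints — an arbitrary (possibly non-compactly supported) $\mathbb{P}$ forces the use of tightness to discard a low-mass tail, while the required range bound forces the clipping trick, and it is precisely the fact that clipping is simultaneously a ReLU network, range-reducing, and non-expansive that lets both constraints be met without degrading the $L^1$ accuracy.
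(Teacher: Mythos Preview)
Your proof is correct and follows essentially the same strategy as the paper: first obtain a ReLU network close to $\boldsymbol{s}$ in $L^{1}(\mathbb{P})$, then post-compose with a ReLU-implementable, $1$-Lipschitz clipping map that fixes $[-1,1]^{q}$ (the paper uses the equivalent formula $1-\rho(2-\rho(x+1))$ for your $\mathrm{clip}$). The only notable difference is in the first step: the paper invokes Hornik's $L^{1}$-density theorem in a single line, whereas you rebuild that conclusion from tightness, Lusin, Tietze, and uniform approximation on compacta --- more work, but more self-contained.
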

\begin{proof}
    By Hornik's Theorem \cite{hornik1991approximation}, there exists a ReLU deep neural network $\boldsymbol{v}_{0}:\mathbb{R}^{p}\to\mathbb{R}^{q}$ such that $\mathbb{E}|\boldsymbol{v}_{0}-\boldsymbol{s}|<\delta$. Let now $\rho$ denote the ReLU activation function and consider the three-layer network $L:\mathbb{R}^{q}\to\mathbb{R}^{q}$ given by $$L(\xb)=\mathbf{e}-\mathbb{I}_{q}\rho(-\mathbb{I}_{q}\rho(\mathbb{I}_{q}\xb+\mathbf{e})+2\mathbf{e}),$$
    where $\mathbb{I}_{q}$ is the $q\times q$ identity matrix, and $\mathbf{e}:=[1,\dots,1]^{\top}\in\mathbb{R}^{q}$, so that, with little abuse of notation, one has $L(\xb)=1-\rho(2-\rho(\xb+1)).$
    The latter acts as follows: given any input $\mathbf{x}\in\mathbb{R}^{q}$, it leaves unchanged all those entries that they lie in $[-1,1]$, while it squashes the rest to $\pm1$. Then, it is straightforward to see that $\boldsymbol{v}:=L\circ\boldsymbol{v}_{0}$ fulfills all the desired properties. In fact, by construction, $\boldsymbol{v}(\mub)\in[-1,1]^{q}$ for all $\mub\in\mathbb{R}^{p}$; furthermore,
    $$\mathbb{E}|\boldsymbol{v}-\boldsymbol{s}|=\mathbb{E}|L\circ\boldsymbol{v}_{0}-L\circ\boldsymbol{s}|\le \mathbb{E}|\boldsymbol{v}_{0}-\boldsymbol{s}|<\delta,$$
as $L$ is 1-Lipschitz.
\end{proof}

\section{Measurable selections}
\label{sec:appendix:measurable}
\newtheoremstyle{lemmaB}
{18pt plus2pt minus1pt}
{18pt plus2pt minus1pt}
{\itshape}
{0pt}
{\bfseries}
{.}
{.5em}
{\thmname{#1} B\thmnumber{#2}%
  \thmnote{ {\the\thm@notefont(#3)}}}
\theoremstyle{lemmaB}
\newtheorem{definitionB}{Definition}
\labelformat{definitionB}{B#1}
\newtheorem{lemmaB}{Lemma}
\labelformat{lemmaB}{B#1}

We recall that a \textit{Polish space} is a complete separable metric space: in particular, all separable Banach spaces are Polish spaces. Hereon, we shall also use the notation $2^{X}$ to denote the power set of a given set $X$, that is, the collection of all its possible subsets, $2^{X}:=\{A\;\;|\;\;A\subseteq X\}.$

\begin{definitionB}
\label{def:setvalued}
Let  $(Y,\mathscr{M})$ be a measurable space and let $(X,d)$ be a Polish space. Let $F:Y\to 2^{X}$. We say that $F$ is a measurable set-valued map if the following conditions hold:
\begin{itemize}
    \item [a)] $F(y)$ is closed in $X$ for all $y\in Y$;\\
    \item [b)] for all open sets $A\subseteq X$ one has $\mathcal{S}_{A}\in\mathscr{M}$, where
\begin{equation}
    \label{eq:sets}
    \mathcal{S}_{A}:=\{y\in Y\;|\;F(y)\cap A\neq\emptyset\}.
\end{equation}
\end{itemize}
\end{definitionB}

\begin{lemmaB}
    \label{lemma:optselec}
    Let $(X,d_{X})$ be a Polish space and let $(C,d_{C})$ be a compact metric space. Let $J:X\times C\to \mathbb{R}$ be continuous. Then, there exists a Borel measurable map $f:X\to C$ such that
    $$J(x,f(x))=\min_{c\in C}J(x,c)\quad\quad\forall x\in X.$$  
\end{lemmaB}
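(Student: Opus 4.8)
The plan is to realize the minimizer map as a measurable selection of the argmin correspondence, obtaining the selection by verifying the hypotheses of Definition~\ref{def:setvalued} and then appealing to a classical measurable selection theorem. Concretely, I would introduce the value function $m(x):=\min_{c\in C}J(x,c)$, which is well defined since $C$ is compact and $J(x,\cdot)$ is continuous, together with the set-valued map $F:X\to 2^{C}$ defined by $F(x):=\{c\in C:\,J(x,c)=m(x)\}$. Here the roles in Definition~\ref{def:setvalued} are played by $Y=X$ endowed with its Borel $\sigma$-algebra $\mathcal{B}(X)$ and by the Polish codomain $C$ (a compact metric space is complete and separable). A Borel selector $f$ with $f(x)\in F(x)$ is then exactly the map claimed by the lemma.

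The first, routine, step is to check condition (a), i.e. that each $F(x)$ is a nonempty closed subset of $C$. Nonemptiness is immediate from compactness of $C$ and continuity of $J(x,\cdot)$. For closedness I would first show that $m$ is continuous: upper semicontinuity follows because $m(x)\le J(x,c)$ for every fixed $c$, whence $\limsup_{x'\to x}m(x')\le J(x,c)$ and then $\limsup_{x'\to x}m(x')\le m(x)$; lower semicontinuity follows by a compactness argument, taking minimizers $c_{n}$ for a sequence $x_{n}\to x$, extracting $c_{n_k}\to c^{\ast}$, and using $m(x_{n_k})=J(x_{n_k},c_{n_k})\to J(x,c^{\ast})\ge m(x)$. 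Continuity of $m$ and of $J(x,\cdot)$ then exhibits $F(x)$ as the zero set of the continuous map $c\mapsto J(x,c)-m(x)$, hence closed.

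The crux, and the step I expect to be the main obstacle, is condition (b): showing that $\mathcal{S}_{A}=\{x:\,F(x)\cap A\neq\emptyset\}\in\mathcal{B}(X)$ for every open $A\subseteq C$. Since $C$ is a metric space, every open set is an $F_{\sigma}$, and since $C$ is compact the closed pieces are compact, so I may write $A=\bigcup_{n}K_{n}$ with each $K_{n}\subseteq C$ compact. For each $n$ the restricted value function $m_{n}(x):=\min_{c\in K_{n}}J(x,c)$ is continuous by the same argument used for $m$, and one has the key equivalence $F(x)\cap K_{n}\neq\emptyset\iff m_{n}(x)=m(x)$: indeed $m_{n}\ge m$ always holds, while equality forces the (attained) minimum over $K_{n}$ to be a global minimizer, and conversely. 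Therefore $\mathcal{S}_{A}=\bigcup_{n}\{x:\,m_{n}(x)=m(x)\}$ is a countable union of closed sets, hence Borel. The care here lies in justifying the $F_{\sigma}$ decomposition and the equivalence relating $F(x)\cap K_{n}$ to equality of the two continuous value functions.

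With (a) and (b) established, $F$ is a measurable set-valued map into the Polish space $C$ with nonempty closed values, so the Kuratowski--Ryll-Nardzewski selection theorem yields a Borel measurable $f:X\to C$ with $f(x)\in F(x)$ for all $x$; by construction $J(x,f(x))=m(x)=\min_{c\in C}J(x,c)$, which is the assertion. The same conclusion also follows in one stroke from the measurable maximum theorem applied to $-J$ with the constant correspondence $x\mapsto C$, but routing the argument through Definition~\ref{def:setvalued} keeps the appendix self-contained.
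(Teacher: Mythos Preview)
Your proposal is correct and follows essentially the same route as the paper: define the argmin correspondence $F$, verify conditions (a) and (b) of Definition~\ref{def:setvalued} by exhausting an open set $A$ with compact sets $K_n$ and showing each $\mathcal{S}_{K_n}$ is closed, and then invoke the Kuratowski--Ryll-Nardzewski selection theorem. The only cosmetic difference is that you package the closedness of $\mathcal{S}_{K_n}$ via the equality $\{x:m_{n}(x)=m(x)\}$ of two continuous value functions, whereas the paper proves the same closedness by a direct sequential compactness argument.
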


\begin{proof} 
To start, we note that the statement in the Lemma is well-defined as for all $x\in X$ one has
$$\inf_{c\in C}J(x,c)=\min_{c\in C}J(x,c),$$
due compactness of $C$ and continuity of $c\mapsto J(x,c)$. Let now $F:X\to 2^{C}$ be the following set-valued map
$$F:x\mapsto\left\{c\in C\;\text{such that}\;J(x,c)=\min_{c'\in C}J(x,c')\right\},$$
so that $F$ assigns a nonempty subset of $C$ to each $x\in X$. We aim at showing that $F$ is a measurable set-valued map as in Definition \ref{def:setvalued}. Thus, we start by noting that $F(x)\subseteq C$ is closed in $C$ for all $x\in X$. In fact, for any fixed $x\in X$ we have 
$$F(x)=j_{x}^{-1}\left(\left\{\min_{c'\in C}j_{x}(c')\right\}\right),$$
where $j_{x}:=J(x,\cdot)$. Thus, $F(x)$ is closed as it is the pre-image of a singleton under the continuous transformation $j_{x}$. Then, following Definition \ref{def:setvalued}, we are left to show that for any open set $A\subseteq C$, the set
$$\mathcal{S}_{A}:=\{x\in X\;:\;F(x)\cap A\neq\emptyset\}$$
is Borel measurable. To this end, we shall first prove that \begin{equation}
    \label{eq:compact-closed}
    K\subseteq C\;\text{compact}\implies S_{K} \;\text{closed}.
\end{equation}
Then, our conclusion would immediately follow, as any open set $A\subseteq C$ can be written as the countable union of compact sets, $A=\cup_{n\in\mathbb{N}}K_{n}$, and clearly $\mathcal{S}_{A}=\cup_{n}\mathcal{S}_{K_{n}}.$ To see that \eqref{eq:compact-closed} holds, let us fix any compact set $K\subseteq C$. Let $\{x_{n}\}_{n}\subseteq \mathcal{S}_{K}$ be a sequence converging to some $x\in X$. By definition of $\mathcal{S}_{K}$, for each $x_{n}$ there exists a $c_{n}\in K$ such that $c_{n}\in F(x_{n})$, i.e. for which $J(x_{n},c_{n})=\min_{c'} J(x_{n},c')$. Since $K$ is compact, up to passing to a subsequence, there exists some $c\in K$ such that $c_{n}\to c$. Let now $\tilde{c}\in C$ be a minimizer for $x$, i.e. a suitable element for which $J(x,\tilde{c})=\min_{c'\in C}J(x,c')$. By continuity, we have
\begin{equation*}    J(x,c)=\lim_{n\to+\infty}J(x_{n},c_{n})=\lim_{n\to+\infty}\min_{c' \in C}J(x_{n},c')\le\lim_{n\to+\infty}J(x_{n},\tilde{c})=J(x,\tilde{c}),
\end{equation*}
implying that $c$ is also a minimizer for $x$. It follows that $c\in K\cap F(x)$ and thus $x\in \mathcal{S}_{K}$. In particular, $\mathcal{S}_{K}$ is closed. 

All of this shows that $F$ fulfills the requirements of Definition \ref{def:setvalued}, making it a measurable set-valued map. We are then allowed to invoke the celebrated measurable selection theorem by Kuratowski–Ryll-Nardzewski \cite{aubin2009set}, which ensures the existence of a measurable map $f:X\to C$ such that $f(x)\in F(x)$, i.e. $J(x,f(x))=\min_{c\in C}J(x,c)$, as wished.    
\end{proof}

\section{Additional details on neural network architectures}
\label{sec:appendix:architectures}

We report below the architectures of the autoencoder models in Section~\ref{sec:exp1}, together with the neural network models employed in Section~\ref{sec:exp2} (DOD-NN and POD-NN benchmarks). NB: all architectures employ the 0.1-leakyReLU activation at the \textit{internal} layers.

\subsection*{Autoencoder architectures}
\vspace{0.2cm}
\noindent \small{Table C.1: Architecture for the Eikonal equation, Section \ref{subsec:eikonal}.}\vspace{0.2cm}\\
\noindent\begin{tabular}{lll}
\hline\hline
    \textbf{Component} &  \textbf{Specifics} & \textbf{Terminal activation}\\\hline
    Encoder & $N_{A} \mapsto n $ & 0.1-leakyReLU\\
    Decoder & $n \mapsto 100 \mapsto 100 \mapsto N_{A}$& -\\
    \hline\hline\\
\end{tabular}
\;\\\\
\small{Table C.2: Architecture for the Navier-Stokes equation, Section \ref{subsec:navier-stokes}.}\vspace{0.2cm}\\
\begin{tabular}{lll}
\hline\hline
    \textbf{Component} &  \textbf{Specifics} & \textbf{Terminal activation}\\\hline
    Encoder & $N_{A} \mapsto n $ & 0.1-leakyReLU\\
    Decoder & $n \mapsto 1000 \mapsto N_{A}$& -\\
    \hline\hline
\end{tabular}

\subsection*{Neural network architectures for the Eikonal equation (Section \ref{sec:exp2})}
\vspace{0.2cm}
\noindent \small{Table C.3: POD-NN network (Benchmark 1).}\vspace{0.2cm}\\
\begin{tabular}{ll}
\hline\hline
    \textbf{Specifics} & \textbf{Terminal activation}\\\hline
    $(p+p') \mapsto 200 \mapsto 200 \mapsto N_{A} $ & -\\
    \hline\hline\\
\end{tabular}
\;\\\\
\small{Table C.4: POD-NN network (Benchmark 2).}\vspace{0.2cm}\\
\begin{tabular}{lll}
\hline\hline
    \textbf{Component} &  \textbf{Specifics} & \textbf{Terminal activation}\\\hline
    $\phi_{1}$ & $p \mapsto 10 \mapsto 25\times N_{A} $ & 0.1-leakyReLU\\
    $\phi_{2}$ & $p' \mapsto 10 \mapsto 25\times N_{A}$& -\\
    \hline\hline\\
\end{tabular}
\;\\\\
\small{Table C.5: DOD-NN networks.}\vspace{0.2cm}\\
\begin{tabular}{lll}
\hline\hline
    \textbf{Component} &  \textbf{Specifics} & \textbf{Terminal activation}\\\hline
    $\phi_{1}$ & $p \mapsto 40 \mapsto 5\times n $ & 0.1-leakyReLU\\
    $\phi_{2}$ & $p' \mapsto 40 \mapsto 5\times n$& -\\
    \hline\hline\\
\end{tabular}

\subsection*{Neural network architectures for the Navier-Stokes 
equation (Section \ref{sec:exp2})}
\noindent\normalsize{In what follows, $\stackrel{*}{\mapsto}$ denotes a nonlearnable feature layer acting either as $$[\theta, x_{0},y_{0},\alpha,\beta]\stackrel{*}{\mapsto}[\cos4\theta,\sin4\theta,x_{0},y_{0},\alpha,\beta]$$ or $[\theta, x_{0},y_{0}]\stackrel{*}{\mapsto}[\cos4\theta,\sin4\theta,x_{0},y_{0}]$, depending on the input size. As in Section \ref{sec:exp1}, the latter is used to enforce rotational symmetry.}
\\\\
\small{Table C.6: POD-NN network (Benchmark 1).}\vspace{0.2cm}\\
\begin{tabular}{ll}
\hline\hline
    \textbf{Specifics} & \textbf{Terminal activation}\\\hline
    $(p+p')\textcolor{white}{p}\stackrel{*}{\mapsto}\textcolor{white}{6}(4+p') \mapsto 170 \mapsto 170 \mapsto N_{A} $ & -\\
    \hline\hline\\
\end{tabular}
\;\\\\
\small{Table C.7: POD-NN network (Benchmark 2).}\vspace{0.2cm}\\
\begin{tabular}{lll}
\hline\hline
    \textbf{Component} &  \textbf{Specifics} & \textbf{Terminal activation}\\\hline
    $\phi_{1}$ & $p\;\stackrel{*}{\mapsto} 4 \mapsto 30 \mapsto 5\times N_{A} $ & 0.1-leakyReLU\\
    $\phi_{2}$ & $p' \mapsto 30 \mapsto 5\times N_{A}$& -\\
    \hline\hline\\
\end{tabular}
\;\\\\
\small{Table C.8: DOD-NN networks.}\vspace{0.2cm}\\
\begin{tabular}{lll}
\hline\hline
    \textbf{Component} &  \textbf{Specifics} & \textbf{Terminal activation}\\\hline
    $\phi_{1}$ & $p\;\stackrel{*}{\mapsto} 4 \mapsto 50 \mapsto 25\times n $ & 0.1-leakyReLU\\
    $\phi_{2}$ & $p' \mapsto 50 \mapsto 25\times n$& -\\
    \hline\hline
\end{tabular}

\end{document}